\documentclass[a4paper,12pt,centertags,oneside]{amsart}
\usepackage{amsmath,amstext,amsthm,a4,amssymb,amscd}
\usepackage[mathscr]{eucal}
\usepackage{mathrsfs}
\usepackage{epsf}
\usepackage{typearea}
\usepackage{charter}

\usepackage{graphicx}
\usepackage[all]{xy}

\newcommand{\R}{\mathbb{R}}
\newcommand{\C}{\mathbb{C}}

\newcommand{\smooth}{{\mathscr{C}^\infty}}

\newcommand{\diff}{d}

\newcommand{\im}{\mathrm{Im}}

\newcommand{\n}{\nabla}
\newcommand{\trs}{\tr_\mathrm{s}}

\DeclareMathOperator{\tr}{Tr}

\newtheorem{prop}{Proposition}[section]
\newtheorem{thm}[prop]{Theorem}
\newtheorem{lemme}[prop]{Lemma}

\theoremstyle{definition}
\newtheorem{defn}[prop]{Definition}

\theoremstyle{remark}
\newtheorem{rem}[prop]{Remark}

\numberwithin{equation}{section}
\setcounter{section}{-1}

\makeatletter
\newcommand*{\rom}[1]{\expandafter\@slowromancap\romannumeral #1@}
\makeatother

\author{Yeping ZHANG} 
\address{D{\'e}partement de Math{\'e}matiques, 
B{\^a}timent 425, 
Facult{\'e} des Sciences d'Orsay, 
Universit{\'e} Paris-Sud,  
F-91405 Orsay Cedex}
\email{yeping.zhang@math.u-psud.fr}

\title[The direct image of a flat fibration]{The direct image of \\ a flat fibration with complex fibers}

\begin{document}
\maketitle

\begin{abstract}
We consider a proper flat fibration with real base and complex fibers. 
First we construct odd characteristic classes for such fibrations by a method that generalizes constructions of Bismut-Lott \cite{bl}. 
Then we consider the direct image of a fiberwise holomorphic vector bundle, which is a flat vector bundle on the base.
We give a Riemann-Roch-Grothendieck theorem calculating the odd real characteristic classes of this flat vector bundle.
\end{abstract} 

\tableofcontents

\section{Introduction}
 
We consider a compact real manifold $X$ equipped with a flat vector bundle $F$. 
We equip $X$ with a Riemannian metric $g^{TX}$. 
We equip $F$ with a Hermitian metric $g^F$.
The real analytic torsion \cite{rs} is a spectral invariant 
of the Hodge Laplacian associated with the de Rham complex $\big(\Omega^\cdot(X,F),d^F\big)$. 
Let $\det H^\cdot(X,F)$ be the determinant line of the de Rham cohomology $H^\cdot(X,F)$. 
The Ray-Singer metric is the product of the real analytic torsion 
and of the $L^2$-metric on $\det H^\cdot(X,F)$.

Cheeger \cite{c-cm} and M{\"u}ller \cite{m-cm} proved independently that,  
if $g^F$ is flat, 
then the Ray-Singer metric is independent of $g^{TX}$. 
M{\"u}ller \cite{m-cm-2} also extended this result to the unimodular case, 
i.e., the induced metric on $\det F$ is flat.
In the general case, 
the dependence of the Ray-Singer metric on the metrics 
was calculated by Bismut-Zhang \cite{bz}.
They also established an extension of the Cheeger-M\"{u}ller theorem 
in the general case. 

Now let $\pi : M \rightarrow S$ be a real smooth fibration with compact fiber $X$.
Let $F$ be a complex flat vector bundle over $M$.
Bismut and Lott \cite{bl} established Riemann-Roch-Grothendieck formulas, 
which calculate the odd Chern classes of the direct image $R^\cdot\pi_{*}F$
in terms of the Euler class of the relative tangent bundle $TX$ 
and the corresponding odd Chern classes of $F$. 
When equipping the considered vector bundles with metrics, 
these classes can be represented by explicit differential forms.
By transgressing the equality of cohomology classes at the level of differential forms, 
they also obtained even analytic torsion forms on $S$, 
whose coboundary is equal to the difference between the differential forms 
appearing on the left and right hand side of the R.R.G. formula. 

In complex geometry, 
the objects parallel to the real analytic torsion and the Ray-Singer metric 
are known as the holomorphic analytic torsion and the Quillen metric, 
which were also introduced by Ray-Singer \cite{rs2}.
These notions were extended to holomorphic fibrations
by  Bismut-Gillet-Soul\'{e} \cite{bgs2} and  Bismut-K{\"o}hler \cite{bk}. 

In this paper, 
we consider a flat fibration $q:\mathcal{N}\rightarrow M$ with complex fiber $N$.
We equip $\mathcal{N}$ with a complex vector bundle $E$, 
which is holomorphic along $N$ and flat along horizontal directions in $\mathcal{N}$. 
Then $R^\cdot\pi_{*}E$ is a flat vector bundle over $M$.
We give a R.R.G. formula for the odd Chern 
classes of $R^\cdot\pi_{*}E$ in terms of the Todd class of the relative tangent bundle 
and of the  Chern classes of $E$.  
Moreover, by equipping the various vector bundles with Hermitian metrics, 
we construct even analytic torsion forms on $M$, 
which transgress the equality of the corresponding cohomology classes. 
The results contained in this paper were announced in \cite{yzhang}.

Let us now give more detail on the content of this paper.

\subsection{Chern-Weil theory and its extensions}

\

Let $M$ be a smooth manifold. 
Let $E$ be a complex vector bundle of rank $r$ over $M$.
Let $\n^E$ be a connection on $E$. 
Let $P$ be an invariant polynomial on $\mathfrak{gl}(r,\mathbb{C})$. 
Chern-Weil theory assigns a closed differential form
\begin{equation}
P(E,\n^E) \in \Omega^\mathrm{even}(M) \;.
\end{equation}
Its cohomology class $\big[P(E,\n^E)\big]\in H^\mathrm{even}(M)$ is independent of $\nabla^{E}$, 
and will be denoted by $P(E)$.
This theory will be referred to as the even Chern-Weil theory.
If $\n^E$ is a flat connection, i.e., $\n^{E,2}=0$, 
$P(E,\n^E)$ is just a constant function on $M$. 

A Chern-Weil theory for flat vector bundles was developed by Bismut-Lott \cite[\textsection 1]{bl}. 
Let $(E,\n^E)$ be a flat complex vector bundle over $M$.
Let $g^E$ be a Hermitian metric on $E$.
Let $f$ be an odd polynomial.
Bismut and Lott assigned a closed differential form
\begin{equation}
f(E,\n^E,g^E) \in \Omega^\mathrm{odd}(M) \;.
\end{equation}
Its cohomology class $\big[f(E,\n^E,g^E)\big]\in H^\mathrm{odd}(M)$ is independent of $g^E$, 
and will be denoted by $f(E,\n^E)$.
This theory will be referred to as the odd Chern-Weil theory. 

In this paper, we will construct characteristic classes for flat fibrations with complex fibers. 
Our construction is a mixture of the even and odd Chern-Weil theory. 

Now we construct our flat fibration. 
Let $G$ be a Lie group. 
Let $p : P_G\rightarrow M$ be a flat $G$-principal bundle. 
Let $N$ be a compact complex manifold. 
We assume that $G$ acts holomorphically on $N$. 
Set
\begin{equation}
\mathcal{N} = P_G \times_G N \;.
\end{equation}
Let 
\begin{equation}
q : \mathcal{N} \rightarrow M
\end{equation}
be the canonical projection. 
Then $q$ defines a flat fibration with canonical fiber $N$. 
Let $E_0$ be a holomorphic vector bundle over $N$.
We assume that the action of $G$ lifts holomorphically to $E_0$. 
Set
\begin{equation}
E = P_G \times_G E_0 \;.
\end{equation}
Then $E$ is a complex vector bundle over $\mathcal{N}$. 

In \textsection \ref{dilab-sec-charac}, 
we construct odd characteristic forms as follows. 
We denote
\begin{align}
\begin{split}
\Omega^\cdot(M) \; & = \smooth\big(M,\Lambda^\cdot(T^*M)\big) \;,\\
\Omega^\cdot(\mathcal{N},E) \; & = \smooth\big(\mathcal{N},\Lambda^\cdot(T^*\mathcal{N})\otimes E\big) \;.
\end{split}
\end{align}
Let $d_M$ be the de Rham operator on $\Omega^\cdot(M)$. 
Let $d^E_M$ be the lift of $d_M$ to $\Omega^\cdot(\mathcal{N},E)$. 
Let $g^E$  be a Hermitian metric on $E$.
Set
\begin{equation}
\omega^E = \big(g^E\big)^{-1} d^E_M\,g^E\in \Omega^1(\mathcal{N},\mathrm{End}(E)) \;.
\end{equation}
Let $\n^E_N$ be the fiberwise Chern connection on $(E,g^E)$. 
Let $A^E$ be the unitary connection on $E$ defined by
\begin{equation}
A^E = \n^E_N + d^E_M + \frac{1}{2}\omega^E \;.
\end{equation} 
Let $r$ be the rank of $E$. 
Let $\mathfrak{gl}(r,\mathbb{C})$ be the Lie algebra of $GL(r,\mathbb{C})$. 
Let $P$ be an invariant polynomial on $\mathfrak{gl}(r,\mathbb{C})$. 
For $a,b\in\mathfrak{gl}(r,\mathbb{C})$, we use the following notation
\begin{equation}
\label{dilab-eq-intro-def-Pder}
\Big\langle P'(a),b \Big\rangle = {\frac{\partial}{\partial t} P(a+tb)}_{t=0} \;.
\end{equation}
Let $N^{\Lambda^\cdot (T^*\mathcal{N})}$ be the number operator of $\Lambda^\cdot (T^*\mathcal{N})$,
i.e., 
for $\alpha\in\Lambda^k (T^*\mathcal{N})$,  
$N^{\Lambda^\cdot (T^*\mathcal{N})}\alpha = k\alpha$.
Put
\begin{align}
\label{dilab-eq-intro-P-tildeP}
\begin{split}
P(E,g^E) \; & = 
(2\pi i)^{-\frac{1}{2}N^{\Lambda^\cdot (T^*\mathcal{N})}} P(-A^{E,2})
\in \Omega^\mathrm{even}(\mathcal{N}) \;,\\
\widetilde{P}(E,g^E) \; & = 
(2\pi i)^{\frac{1}{2}-\frac{1}{2}N^{\Lambda^\cdot (T^*\mathcal{N})}} \Big\langle P'(-A^{E,2}),\frac{\omega^E}{2} \Big\rangle
\in \Omega^\mathrm{odd}(\mathcal{N}) \;.
\end{split}
\end{align}

\begin{thm}
\label{dilab-thm-intro-ch-class}
The differential form
\begin{equation}
q_*\big[P(E,g^E)\big] \in \Omega^\mathrm{even}(M) 
\end{equation}
is a constant function. 

The differential form
\begin{equation}
q_*\big[\widetilde{P}(E,g^E)\big] \in \Omega^\mathrm{odd}(M) 
\end{equation}
is closed. Its cohomology class 
\begin{equation}
\left[q_*\big[\widetilde{P}(E,g^E)\big]\right]\in H^\mathrm{odd}(M)
\end{equation}
is independent of $g^E$.  
\end{thm}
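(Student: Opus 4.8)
The plan is to treat this as a fiber-integrated version of the Bismut--Lott odd Chern--Weil theory, the novelty being that the flat connection of that theory is replaced by a partially flat unitary connection whose horizontal part is flat and whose vertical part is the fiberwise Chern connection. I first record the algebraic identities on which everything rests. Since $P_G$ is flat, the horizontal lift satisfies $(d^E_M)^2 = 0$; since $G$ acts holomorphically, the fiberwise Dolbeault operator is horizontally parallel, i.e. $[\bar\partial^E_N, d^E_M]_+ = 0$ for the anticommutator of these odd operators; and since $d^E_M + \omega^E$ is the $g^E$-adjoint of the flat operator $d^E_M$, it is itself flat, whence $[d^E_M, \omega^E]_+ = -(\omega^E)^2$. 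Decomposing by horizontal/vertical degree, the curvature $A^{E,2}$ splits into a purely vertical part $(\nabla^E_N)^2$, a mixed part $[\nabla^E_N, d^E_M + \tfrac12\omega^E]_+$, and a purely horizontal part $(d^E_M + \tfrac12\omega^E)^2 = -\tfrac14(\omega^E)^2$. Because $A^E$ is a genuine connection, $P(E,g^E)$ is closed on $\mathcal{N}$ by Chern--Weil theory, and since $q$ is proper with compact fibers, $q_*$ commutes up to sign with the de Rham differential; thus $q_*\big[P(E,g^E)\big]$ is automatically closed on $M$.

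To upgrade closedness to constancy in Part 1, I would pass to a local flat trivialization. Over a simply connected $U \subseteq M$ the flat structure gives $\mathcal{N}|_U \cong U \times N$ in which $d^E_M = d_U$ and, crucially, the fiberwise holomorphic structure $\bar\partial^E_N$ is independent of $s \in U$; only the Hermitian metric $g^E_s$ varies. The degree-zero component of $q_*\big[P(E,g^E)\big]$ is then $\int_N P\big(-(\nabla^E_N)^2\big)$ up to normalization, a characteristic number of $(N,E_0)$, hence constant in $s$. For the components of positive degree on $M$ one must show that the fiber integral of every positive-horizontal-degree part of $P(-A^{E,2})$ vanishes. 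This is the main obstacle, and it is exactly here that the complex structure of the fiber is indispensable: moving horizontally changes only the metric on a fixed holomorphic bundle, and by Bott--Chern theory the dependence of the fiberwise Chern--Weil data on the Hermitian metric is through fiberwise $\partial\bar\partial$-exact transgressions, which integrate to zero over the closed complex fiber $N$. I would organize this as a first-order computation, expressing each $ds^j$-coefficient of $P(-A^{E,2})$ through $\big\langle P'(-(\nabla^E_N)^2), \partial_{s^j}\nabla^E_N + \tfrac12\nabla^E_N\omega^E\big\rangle$ and identifying it, via the Bott--Chern transgression of the fiberwise Chern connection, as fiberwise $\bar\partial^E_N$-exact.

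For the closedness in Part 2 I would exploit the one-parameter family $A^E_u = \nabla^E_N + d^E_M + \tfrac{u}{2}\omega^E$, interpolating between the horizontally flat connection ($u=0$) and the $g^E$-adjoint connection ($u=2$), passing through the unitary connection $A^E$ at $u=1$. Since $\tfrac{d}{du}A^E_u = \tfrac12\omega^E$ is constant in $u$, the Chern--Weil transgression formula gives $\tfrac{d}{du}P(-A^{E,2}_u) = -d_\mathcal{N}\big\langle P'(-A^{E,2}_u), \tfrac12\omega^E\big\rangle$, and at $u=1$ the integrand on the right is, up to the normalizing power of $2\pi i$, precisely $\widetilde P(E,g^E)$. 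Restricting $A^E_u$ to any fiber annihilates the horizontal pieces $d^E_M$ and $\omega^E$, so the fiberwise curvature, hence the degree-zero part of $q_*\big[P(-A^{E,2}_u)\big]$, is independent of $u$; combined with Part 1 this shows $q_*\big[P(-A^{E,2}_u)\big]$ is a $u$-independent constant. Consequently
\begin{equation*}
d_M\, q_*\big[\widetilde P(E,g^E)\big] \;\propto\; q_*\, d_\mathcal{N}\big[\widetilde P(E,g^E)\big] \;\propto\; \frac{d}{du}\Big|_{u=1} q_*\big[P(-A^{E,2}_u)\big] \;=\; 0 ,
\end{equation*}
which proves that $q_*\big[\widetilde P(E,g^E)\big]$ is closed.

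Finally, for the independence of the cohomology class on $g^E$, I would run the analogous argument one dimension higher. Given a smooth family $g^E_v$ of metrics I would differentiate $q_*\big[\widetilde P(E,g^E_v)\big]$ in $v$ and, by a double Chern--Simons type transgression in the variables $(u,v)$, exhibit $\tfrac{\partial}{\partial v} q_*\big[\widetilde P(E,g^E_v)\big]$ as $d_M$ of an explicit odd form on $M$. The terms that are fiberwise exact drop out under $q_*$ by the same Bott--Chern and Stokes mechanism as in Part 1, while the terms that are $d_\mathcal{N}$-exact on $\mathcal{N}$ become $d_M$-exact on $M$ because $q_*$ intertwines the differentials. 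Hence $\big[\,q_*[\widetilde P(E,g^E)]\,\big] \in H^{\mathrm{odd}}(M)$ is independent of $g^E$, completing the proof. Throughout, the delicate point remains the vanishing of the positive-degree fiber integrals, for which the holomorphicity of the $G$-action and the Bott--Chern calculus on the closed complex fibers are the essential inputs.
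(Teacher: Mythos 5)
Your overall architecture (unitarize, transgress in the parameter interpolating between $d^E_M$ and $d^{E,*}_M$, deduce closedness from the constancy of $q_*\big[P(-A^{E,2}_t)\big]$, and get metric independence by working over $M\times[0,1]$) is the paper's architecture for the second half of the theorem. The genuine gap is exactly at the point you flag as ``the main obstacle'': the vanishing of the fiber integrals of the positive-horizontal-degree components of $P(-A^{E,2})$. Your proposed mechanism --- a first-order computation in the $ds^j$-coefficients identified with a Bott--Chern transgression --- only addresses horizontal degree one, whereas the components of horizontal degree $k\geq 2$ (which involve $(\omega^E)^k$ and iterated mixed curvature terms inside the invariant polynomial) are not the $s$-derivative of any fiberwise quantity, and the heuristic that ``dependence on the metric is $\partial\bar\partial$-exact'' does not apply to them: $A^E$ is the unitarization, not a family of fiberwise Chern connections, and its purely horizontal curvature $-\tfrac14(\omega^E)^2$ has no Bott--Chern interpretation. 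The paper closes this gap by a different and degree-uniform device: with $U=(-1)^{N^{\Lambda^\cdot(\overline{T^*N})}}$ one has $U^{-1}A^{E,2}U=-\bigl(d^E_N+\tfrac12\omega^E\bigr)^2$, so that up to sign and conjugation $P(-A^{E,2})$ is the ordinary Chern--Weil form of the honest connection $d^E_N+\tfrac12\omega^E$ on the product $N\times T_sM$ (fiberwise Chern connection plus a horizontal $\mathrm{End}(E)$-valued one-form). Ordinary Chern--Weil theory in that product setting (Proposition \ref{dilab-prop-conseq-chern-weil}) then shows that every positive-horizontal-degree component is $d_N$-exact along the fiber, hence dies under $q_*$. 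You should either find this conjugation or supply an argument valid in all horizontal degrees; as written, Part 1 is not proved.

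A second, dependent gap: in Part 2 you need $\tfrac{d}{du}q_*\big[P(-A^{E,2}_u)\big]=0$ at $u=1$, and you justify this by ``the degree-zero part is independent of $u$, combined with Part 1''. Part 1 only concerns $u=1$, and constancy of the degree-zero part does not control the positive-degree-in-$M$ parts of $q_*\big[P(-A^{E,2}_u)\big]$ for $u\neq 1$. The paper proves the needed statement for the whole family (Theorem \ref{dilab-thm-gen-flat2trivialclass}) via the rescaling identity $A^{E,2}_t=4t(1-t)V^{-1}_tA^{E,2}V_t$, which reduces general $t$ to the case already treated. Your transgression identity itself and the functoriality argument for independence of $g^E$ are correct and coincide with the paper's.
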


In the sequel, we use the notation
\begin{equation}
q_*\big[\widetilde{P}(E)\big] = \left[q_*\big[\widetilde{P}(E,g^E)\big]\right]\in H^\mathrm{odd}(M) \;.
\end{equation}

Now let $F$ be another vector bundle (of rank $r'$) over $\mathcal{N}$ satisfying the same properties as $E$. 
Let $g^F$ be a Hermitian metric on $F$. 
Let $Q$ be an invariant polynomial on $\mathfrak{gl}(r',\mathbb{C})$. 
The natural product on the forms $\widetilde{P}(E,g^E)$ and $\widetilde{Q}(F,g^F)$ is given by
\begin{equation}
\label{dilab-eq-intro-tildePQ}
\widetilde{P}(E,g^E) * \widetilde{Q}(F,g^F) = \widetilde{P}(E,g^E)Q(F,g^F) + P(E,g^E)\widetilde{Q}(F,g^F) \;.
\end{equation}

\subsection{A R.R.G. theorem for flat fibrations with complex fibers}

\label{dilab-subsect-intro-rrg}

\

In the rest of the introduction, we suppose that $N$ is a K{\"a}hler manifold.

Let $H^\cdot(N,E)$ be the fiberwise Dolbeault cohomology group of $E$ along $N$.
Then $H^\cdot(N,E)$ is a graded flat vector bundle over $M$. 
Let $\n^{H^\cdot(N,E)}$ be its flat connection. 

Set $f(x) = x\exp(x^2)$.
Let
\begin{equation}
f\big(H^\cdot(N,E),\n^{H^\cdot(N,E)}\big) \in H^\mathrm{odd}(M,\mathbb{R})
\end{equation}
be the Bismut-Lott odd characteristic class \cite[\textsection 1]{bl}.
 
\begin{thm}
\label{dilab-intro-thm-riemann-roch-grothendieck}
We have
\begin{equation}
f\big(H^\cdot(N,E), \nabla^{H^\cdot(N,E)}\big) \\
= q_*\big[\widetilde{\mathrm{Td}}(TN)*\widetilde{\mathrm{ch}}(E)\big]
\in H^\mathrm{odd}(M,\mathbb{R}) \;.
\end{equation}
\end{thm}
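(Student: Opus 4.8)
The plan is to adapt the heat-kernel and superconnection proof of the Bismut-Lott Riemann-Roch-Grothendieck theorem to the Dolbeault complex along the fibers, replacing the fiberwise de Rham operator by the fiberwise $\bar\partial$-operator. First I would introduce the infinite-dimensional $\Z$-graded vector bundle over $M$ whose fiber over $s\in M$ is the space of fiberwise $(0,\cdot)$-forms with coefficients in $E$,
\begin{equation}
\mathcal{E}^\cdot = \smooth\big(N,\Lambda^\cdot(\overline{T^*N})\otimes E\big)\;,
\end{equation}
where $N$ is identified with the fiber $q^{-1}(s)$. The flat horizontal structure underlying $\mathcal{N}$ lifts $d_M$ to a flat connection on $\mathcal{E}^\cdot$, and the $L^2$-metric built from $g^E$ and a fiberwise Kähler metric turns it into a bundle of the type treated in the odd Chern-Weil theory. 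Since $N$ is Kähler, fiberwise Hodge theory identifies the fiberwise $\bar\partial$-cohomology of $\mathcal{E}^\cdot$ with the graded flat vector bundle $H^\cdot(N,E)$ appearing on the left-hand side.

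The core of the argument is a one-parameter family of odd-degree superconnections $B_t$ on $\mathcal{E}^\cdot$, obtained by combining the rescaled fiberwise Dolbeault operator $\sqrt{t}\,(\bar\partial^N+\bar\partial^{N,*})$, the lift $d^E_M$ of the base de Rham operator, the metric one-form $\omega^E$, and the zero-order terms that make $B_t$ a Bismut-type superconnection. To each such superconnection I attach, via $\trs$ and the function $f$, an odd form $\alpha_t\in\Omega^\mathrm{odd}(M)$ whose cohomology class is the quantity to be computed. The decisive structural input is a transgression formula: I would show that $\partial\alpha_t/\partial t$ is $d_M$-exact, so that $[\alpha_t]\in H^\mathrm{odd}(M,\R)$ is independent of $t$. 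It then suffices to compute the two ends of the interval.

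In the limit $t\to\infty$, the finite-dimensional reduction via the fiberwise harmonic projection (which exists thanks to the Kähler Hodge decomposition) collapses $B_t$ onto the flat bundle $\big(H^\cdot(N,E),\n^{H^\cdot(N,E)}\big)$, and $[\alpha_t]$ converges to the Bismut-Lott class $f\big(H^\cdot(N,E),\n^{H^\cdot(N,E)}\big)$, the left-hand side. In the limit $t\to 0$, the computation is local on $\mathcal{N}$: applying heat-kernel asymptotics and Getzler-type rescaling to the Kodaira Laplacian, I would show that the fiberwise supertrace density converges to the Dolbeault local index density, which factors as the Todd form of the relative holomorphic tangent bundle times the Chern character form of $E$. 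After fiber integration $q_*$ and matching with the even/odd formalism of (\ref{dilab-eq-intro-tildePQ}), this yields $q_*\big[\widetilde{\mathrm{Td}}(TN)*\widetilde{\mathrm{ch}}(E)\big]$, the right-hand side; Theorem \ref{dilab-thm-intro-ch-class} guarantees that this expression is a well-defined cohomology class.

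I expect the main obstacle to be the $t\to 0$ analysis. The difficulty is twofold: one must control the short-time behaviour of the full superconnection heat operator uniformly in the odd base-form directions, and one must identify the resulting local density precisely as the mixed product $\widetilde{\mathrm{Td}}(TN)*\widetilde{\mathrm{ch}}(E)$ rather than merely its cohomology class. Because the construction interpolates between the even Chern-Weil theory along the complex fibers, where the Todd class arises from the Riemann-Roch density, and the odd Chern-Weil theory along the flat base, where $\omega^E$ produces the tilde-forms, the bookkeeping of form degrees in the rescaling and the careful handling of the non-self-adjoint contribution of $\omega^E$ are where the real work lies.
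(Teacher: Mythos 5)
Your proposal follows essentially the same route as the paper: the infinite-dimensional flat bundle $\mathscr{E}=\smooth(N,\Lambda^\cdot(\overline{T^*N})\otimes E)$, the rescaled superconnection forms $\alpha_t$, the transgression in $t$, the $t\to\infty$ collapse onto $\big(H^\cdot(N,E),\n^{H^\cdot(N,E)}\big)$, and the $t\to 0$ local-index computation producing $q_*\big[\widetilde{\mathrm{Td}}(TN,g^{TN})*\widetilde{\mathrm{ch}}(E,g^E)\big]$ are exactly the paper's Theorem \ref{dilab-label-intro-asymptotics-alpha-t} and its proof. You correctly single out the $t\to 0$ identification of the mixed even/odd density as the hard point, which the paper handles with the auxiliary Grassmann variables $da$, $d\bar a$, $\epsilon$ and a Lichnerowicz formula in the style of Bismut-K\"ohler.
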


Here $\widetilde{\mathrm{Td}}(TN)*\widetilde{\mathrm{ch}}(E)$ 
is defined by \eqref{dilab-eq-intro-P-tildeP} and \eqref{dilab-eq-intro-tildePQ}.

Now we explain the idea of the proof. 
We will use the superconnection formalism \cite[\textsection 2]{bl}.
Put
\begin{equation}
\mathscr{E} = \mathscr{C}^\infty(N,\Lambda^\cdot (\overline{T^*N})\otimes E ) \;,
\end{equation}
which is an infinite dimensional flat vector bundle over $M$. 
Let $d^\mathscr{E}_M$ be its flat connection. 
Let $\overline{\partial}^E_N$ be the Dolbeault operator on $\mathscr{E}$. 
Set
\begin{equation}
A^\mathscr{E} = \overline{\partial}^E_N + d^\mathscr{E}_M \;,
\end{equation}
which acts on $\Omega^\cdot(M,\mathscr{E})$. 
Here $A^\mathscr{E}$ is a flat superconnection on $\mathscr{E}$ in the sense of Bismut-Lott \cite[Definition 1.1]{bl}.

Let $g^{TN}$ be a fiberwise K{\"a}hler metric on $TN$. 
Let $g^E$ be a Hermitian metric on $E$. 
Let $g^\mathscr{E}$ be the induced $L^2$-metric on $\mathscr{E}$. 
Let $A^{\mathscr{E},*}$ be the adjoint superconnection of $A^\mathscr{E}$ in the sense of Bismut-Lott \cite[Definition 1.6]{bl}.

Let $N^{\Lambda^\cdot(T^*M)}$ be the number operator of $\Lambda^\cdot(T^*M)$. 
Set
\begin{equation}
D^\mathscr{E} = 2^{-N^{\Lambda^\cdot(T^*M)}} \big(A^{\mathscr{E},*} - A^\mathscr{E}\big) 2^{N^{\Lambda^\cdot(T^*M)}} 
\in\Omega^\cdot(M,\mathrm{End}(\mathscr{E})) \;.
\end{equation} 
For $t>0$, let $D^\mathscr{E}_t$ be the operator $D^\mathscr{E}$ associated with the rescaled metric $\frac{1}{t}g^{TN}$.
Following Bismut-Lott \cite[(2.22),(2,23)]{bl}, we define
\begin{align}
\label{dilab-eq-intro-def-alpha-beta-t}
\begin{split}
\alpha_t & = (2\pi i)^{\frac{1}{2}-\frac{1}{2}N^{\Lambda^\cdot(T^*M)}} \trs\Big[D^\mathscr{E}_t\exp(D^{\mathscr{E},2}_t)\Big] \;,\\
\beta_t & = (2\pi i)^{-\frac{1}{2}N^{\Lambda^\cdot(T^*M)}} \trs\Big[ \frac{N^{\Lambda^\cdot(\overline{T^*N})}}{2}(1+2D^{\mathscr{E},2}_t) \exp(D^{\mathscr{E},2}_t) \Big] \;.
\end{split}
\end{align}
We have
\begin{equation}
\label{dilab-eq-intro-alpha-t-beta-t}
d_M \alpha_t = 0 \;,\hspace{5mm}
\frac{\partial}{\partial t} \alpha_t = \frac{1}{t}d_M \beta_t \;.
\end{equation}

Let $g^{H^\cdot(N,E)}$ be the metric on $H^\cdot(N,E)$ induced by the $L^2$-metric on $\mathscr{E}$ via the Hodge theorem.
Let
\begin{equation}
f\big(H^\cdot(N,E),\n^{H^\cdot(N,E)},g^{H^\cdot(N,E)}\big) \in \Omega^\mathrm{odd}(M)
\end{equation}
be the Bismut-Lott odd characteristic form \cite[Definition 1.7]{bl}.

Theorem \ref{dilab-intro-thm-riemann-roch-grothendieck} is a consequence of the following theorem. 
\begin{thm}
\label{dilab-label-intro-asymptotics-alpha-t}
As $t\rightarrow\infty$,
\begin{equation}
\label{dilab-intro-eq-asymptotics-alpha-t-1}
\alpha_t  = 
f\big(H^\cdot(N,E), \nabla^{H^\cdot(N,E)}, g^{H^\cdot(N,E)}\big) + \mathscr{O}\big(\frac{1}{\sqrt{t}}\big) \;.
\end{equation}
As $t\rightarrow 0$,
\begin{equation}
\label{dilab-intro-eq-asymptotics-alpha-t-2}
\alpha_t =  
q_*\big[\widetilde{\mathrm{Td}}(TN,g^{TN})*\widetilde{\mathrm{ch}}(E,g^E)\big] + \frac{\text{a fixed exact form }}{t} + \mathscr{O}\big(\sqrt{t}\big) \;.
\end{equation}
\end{thm}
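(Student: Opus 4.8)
The plan is to adapt the asymptotic analysis of Bismut-Lott \cite[\textsection 3]{bl} to the present situation, in which the fiberwise de Rham complex is replaced by the fiberwise Dolbeault complex, so that the local computation produces the Todd and Chern character forms rather than the Euler form. I take the transgression identities \eqref{dilab-eq-intro-alpha-t-beta-t} for granted; the content of the theorem is the identification of the two endpoint limits of the closed form $\alpha_t$.

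For $t\to\infty$, the method is Hodge-theoretic. Rescaling the fiber metric by $\frac1t$ multiplies the fiberwise Dolbeault Laplacian $\Delta^E_N$ appearing in $D^{\mathscr{E},2}_t$ by a factor $t$, so that, up to the conjugation by $2^{\pm N^{\Lambda^\cdot(T^*M)}}$ and lower-order terms, $D^{\mathscr{E},2}_t$ behaves like $-t\Delta^E_N$. The nonzero spectrum of $\Delta^E_N$ is bounded below, so as $t\to\infty$ a widening spectral gap forces the heat operator $\exp(D^{\mathscr{E},2}_t)$ to concentrate on $\Ker\Delta^E_N$, the fiberwise harmonic forms, which the Hodge theorem identifies with $H^\cdot(N,E)$. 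Decomposing $\mathscr{E}$ into harmonic forms and their orthogonal complement, I would show that the orthogonal part together with the off-diagonal coupling in the superconnection contributes $\mathscr{O}(1/\sqrt t)$, while on the harmonic part $D^\mathscr{E}_t$ converges to the finite-dimensional operator attached to the flat bundle $(H^\cdot(N,E),\n^{H^\cdot(N,E)})$ with its $L^2$-metric. This yields \eqref{dilab-intro-eq-asymptotics-alpha-t-1}.

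For $t\to 0$, the proof is a fiberwise local index computation. The strategy is to apply Getzler rescaling to the superconnection heat operator along $N$; after rescaling, the fiberwise operator converges to a harmonic oscillator, and Mehler's formula evaluates the limit of the rescaled supertrace as the fiber integrand built from the Todd form of $(TN,g^{TN})$ and the Chern character form of $(E,g^E)$. Integrating over $N$ gives the constant term $q_*\big[\widetilde{\mathrm{Td}}(TN,g^{TN})*\widetilde{\mathrm{ch}}(E,g^E)\big]$. The same local analysis shows that $\alpha_t$ and $\beta_t$ admit expansions in powers of $t^{1/2}$ with a single negative power $t^{-1}$. Writing $\alpha_t=A_{-1}t^{-1}+A_0+\mathscr{O}(\sqrt t)$ and $\beta_t=B_{-1}t^{-1}+\mathscr{O}(1)$, and matching the coefficients of $t^{-2}$ in the transgression identity $\frac{\partial}{\partial t}\alpha_t=\frac1t d_M\beta_t$, one is forced to have $A_{-1}=-d_M B_{-1}$; hence the $t^{-1}$ coefficient is a fixed exact form, giving \eqref{dilab-intro-eq-asymptotics-alpha-t-2}.

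The main obstacle is the small-$t$ analysis. One must first establish, by uniform heat-kernel estimates for the rescaled fiberwise superconnection Laplacian, that the expansion exists with the asserted structure; then one must carry the Getzler rescaling through the mixed even/odd structure of $A^\mathscr{E}=\overline{\partial}^E_N+d^\mathscr{E}_M$, so that the fiberwise holomorphic data produce $\mathrm{Td}(TN)$ and $\mathrm{ch}(E)$ while the horizontal flat data produce the transgression forms, with the normalizations $(2\pi i)^{\pm\frac12 N^{\Lambda^\cdot(T^*M)}}$ reproducing exactly the combination $\widetilde{\mathrm{Td}}(TN)*\widetilde{\mathrm{ch}}(E)$ of \eqref{dilab-eq-intro-P-tildeP} and \eqref{dilab-eq-intro-tildePQ}. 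This identification of the leading coefficient is the most delicate point, and once it is in hand, combining the two limits with the transgression \eqref{dilab-eq-intro-alpha-t-beta-t} yields Theorem \ref{dilab-intro-thm-riemann-roch-grothendieck}.
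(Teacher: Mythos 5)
Your large-time argument coincides with the paper's: the proof of \eqref{dilab-eq-large-time-convergence} is reduced to the spectral-gap and Hodge-theoretic argument of Bismut--Lott, exactly as you describe. Your device for the singular term as $t\to 0$ --- first establish Laurent expansions $\alpha_t=A_{-1}t^{-1}+A_0+\mathscr{O}(\sqrt t)$ and $\beta_t=B_{-1}t^{-1}+\mathscr{O}(1)$, then read off $A_{-1}=-d_MB_{-1}$ by matching the $t^{-2}$ coefficients in $\frac{\partial}{\partial t}\alpha_t=\frac1t d_M\beta_t$ --- is sound and consistent with the paper's explicit answer, where $A_{-1}=\frac12 d_Mq_*\big[\frac{\omega}{2\pi}\mathrm{Td}(TN,\n^{TN})\mathrm{ch}(E,\n^E)\big]$ and $B_{-1}=-\frac12 q_*\big[\frac{\omega}{2\pi}\mathrm{Td}(TN,\n^{TN})\mathrm{ch}(E,\n^E)\big]$; it yields only exactness rather than this identification, but that is all the stated theorem requires.

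The gap is in the identification of the constant term $A_0$. You propose to apply Getzler rescaling and Mehler's formula directly to $\trs\big[D^\mathscr{E}_t\exp(D^{\mathscr{E},2}_t)\big]$ and to ``evaluate the limit of the rescaled supertrace''; but this quantity has no limit --- it diverges like $1/t$, as your own expansion records --- and the constant term of a Laurent expansion is a subleading heat coefficient that a leading-order rescaling argument does not compute. Moreover the insertion of $D^\mathscr{E}_t$, which carries Clifford degree one, is incompatible with the naive Getzler rescaling of the fiberwise $\mathrm{spin}^c$ Dirac operator. The paper's resolution, following Bismut--Gillet--Soul\'e and Bismut--K\"ohler, is Lemma \ref{dilab-prop-a-bar-a-formula}: one introduces auxiliary odd Grassmann variables $a$, $\bar a$, $\epsilon$ and rewrites $\trs\big[D^\mathscr{E}\exp(D^{\mathscr{E},2})\big]$ as the $\epsilon\, da\, d\bar a$-coefficient of a supertrace of a single exponential, plus the explicit coboundary $d_M\trs\big[\frac12 N^{\Lambda^\cdot(\overline{T^*N})}\exp(D^{\mathscr{E},2})\big]$. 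The exponential term is governed by the Lichnerowicz formula of Proposition \ref{dilab-prop-lichnerowicz-a-bar-a}, converges as $t\to 0$, and its limit is evaluated by the Bismut--K\"ohler method to be exactly $q_*\big[\widetilde{\mathrm{Td}}(TN,g^{TN})*\widetilde{\mathrm{ch}}(E,g^E)\big]$, as in \eqref{dilab-eq-8-proof-prop-small-time-convergence}; the entire $1/t$ singularity is isolated in the coboundary term, as in \eqref{dilab-eq-10-proof-prop-small-time-convergence}. Without this rewriting, or an equivalent device converting the $D^\mathscr{E}_t$- and $\omega^\mathscr{E}$-insertions into Grassmann coefficients of a deformed curvature, your plan does not produce the combination $\widetilde{\mathrm{Td}}\cdot\mathrm{ch}+\mathrm{Td}\cdot\widetilde{\mathrm{ch}}$, whose appearance depends precisely on those insertions; this is the missing idea.
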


\subsection{Analytic torsion forms}
\label{dilab-subsect-intro-torsion-form}

\

In the same way as in  \eqref{dilab-intro-eq-asymptotics-alpha-t-1} and \eqref{dilab-intro-eq-asymptotics-alpha-t-2}  , 
we also obtain an asymptotic estimate for $\beta_t$ as $t\rightarrow\infty$ and $t\rightarrow 0$. 
We construct explicitly an analytic torsion form
\begin{equation}
\mathscr{T}(g^{TN},g^E) \in \Omega^\mathrm{even}(M) \;,
\end{equation}
which is defined by subtracting the singularities of the following integral
\begin{equation}
- \int_0^\infty \beta_t \frac{dt}{t} \;.
\end{equation}
By \eqref{dilab-eq-intro-alpha-t-beta-t}, \eqref{dilab-intro-eq-asymptotics-alpha-t-1} and \eqref{dilab-intro-eq-asymptotics-alpha-t-2},
we have
\begin{align}
\begin{split}
& d_M \mathscr{T}(g^{TN},g^E) \\
= \; & q_*\big[\widetilde{\mathrm{Td}}(TN,g^{TN})*\widetilde{\mathrm{ch}}(E,g^E)\big] 
- f\big(H^\cdot(N,E), \n^{H^\cdot(N,E)}, g^{H^\cdot(N,E)}\big) \;.
\end{split}
\end{align}
Moreover, we show that the degree zero component of $\mathscr{T}(g^{TN},g^E) $ is 
the Ray-Singer holomorphic torsion \cite{rs2} associated with $(N,g^{TN},E,g^E)$.

This paper is organized as follows.

In \textsection \ref{dilab-sect-pre}, 
we recall several standard constructions and known results. 
Most of them can be found in \cite{bgv} and \cite[\textsection 1]{bl}.

In \textsection \ref{dilab-sec-charac}, 
we construct characteristic classes for flat fibrations
and prove Theorem \ref{dilab-thm-intro-ch-class}.

In \textsection \ref{dilab-section-rrg},
we prove Theorem \ref{dilab-label-intro-asymptotics-alpha-t}.
As a consequence, we establish Theorem \ref{dilab-intro-thm-riemann-roch-grothendieck}.
We also construct the analytic torsion form $\mathscr{T}(g^{TN},g^E)$.

\

\textbf{Acknowledgment}

This paper is part of the author's PhD thesis.
The author would like to thank his advisor Professor Jean-Michel Bismut for his guidance. 
The research leading to the results contained in this paper has received funding from
the European Research Council (E.R.C.) under European Union's Seventh Framework Program 
(FP7/2007-2013)/ ERC grant agreement No. 291060.

\section{Preliminaries}
\label{dilab-sect-pre}

\

This section is organized as follows. 

In \textsection \ref{dilab-subsec-superalg}, 
we introduce the superalgebra formalism.

In \textsection \ref{dilab-subsect-clifford-pre}, 
we introduce the Clifford algebra.

In \textsection \ref{dilab-subsec-evenodd-char}, 
we introduce the Chern-Weil theory.

In \textsection \ref{dilab-subsec-fibration-connection-metric},
we introduce several objects associated with a smooth fibration.

The constructions and results contained in this section can be found in 
\cite[\textsection 1]{bgv},
\cite[\textsection 1]{b}, 
\cite[\textsection 1]{bl}.

\subsection{Superalgebras}
\label{dilab-subsec-superalg}

\

In the sequel, all the algebras  will be over $\mathbb{R}$ or $\mathbb{C}$.

\begin{defn}
A superalgebra is 
an algebra $A$ equipped with a $\mathbb{Z}_2$-grading $A = A^{+} 
\oplus A^{-}$
such that
\begin{equation}
A^+A^\pm \subseteq A^\pm \;,\hspace{5mm}
A^-A^\pm \subseteq A^\mp \;.
\end{equation}
\end{defn}

Let $A$ be a superalgebra. 
An element $a\in A$ is said to be homogeneous if $a\in A^\pm$. 
We denote $\deg a = 0$ (resp.  $\deg a = 1$) 
if $a\in A^+$ (resp. $a\in A^-$).

The supercommutator of two homogeneous elements $a,b\in A$ is defined by
\begin{equation}
[a,b] = ab - (-1)^{\deg a \deg b}ba \;.
\end{equation}
Also $[\cdot,\cdot]$ extends by linearity to the whole superalgebra $A$. 

\begin{defn}
Let $A$ and $B$ be two superalgebras. 
The $\mathbb{Z}_2$-graded tensor product $A\widehat{\otimes}B$ is identified with $A\otimes B$ as vector spaces, and the multiplication is given by
\begin{equation}
(a_1\otimes b_2)\cdot(a_2\otimes b_2) = (-1)^{\deg a_2 \deg b_1} a_1a_2 \otimes b_1b_2 \;.
\end{equation}
\end{defn}

\begin{defn}
Let $A$ be a superalgebra. 
A super $A$-module is a $\mathbb{Z}_2$-graded vector space $V = V^+ \oplus V^-$
equipped with an action of $A$ such that
\begin{equation}
A^+ V^\pm \subseteq V^\pm \;,\hspace{5mm}
A^- V^\pm \subseteq V^\mp \;.
\end{equation}
\end{defn}

Let $V = V^+ \oplus V^-$ be a $\mathbb{Z}_2$-graded vector space. 
Set 
\begin{equation}
\tau = \mathrm{id}_{V^+} - \mathrm{id}_{V^-} \in \mathrm{End}(V) \;,
\end{equation}
and
\begin{equation}
\mathrm{End}^\pm(V) = \Big\{a\in\mathrm{End}(V)\;:\; \tau a = \pm a \tau \Big\} \;.
\end{equation}
Then $\mathrm{End}(V)= \mathrm{End}^+(V)\oplus\mathrm{End}^-(V)$ is a superalgebra, 
and $V$ is a super $\mathrm{End}(V)$-module.

For $a\in\mathrm{End}(V)$, its supertrace is defined by
\begin{equation}
\trs\big[a\big] = \tr\big[\tau a\big] 
\index{t rs@$\trs$} \;.
\end{equation} 
For $a,b\in\mathrm{End}(V)$, we have
\begin{equation}
\label{dilab-eq-trs-commutator-zero}
\trs\big[[a,b]\big] = 0 \;.
\end{equation} 

In this paper, 
we will apply the superalgebra formalism to the following setting.
Let $M$ be a smooth manifold. 
We denote by $\Omega^\cdot(M)$ be the algebra of differential forms on $M$. 
We always equip $\Omega^\cdot(M)$ with the $\mathbb{Z}_2$-grading $\Omega^\mathrm{even/odd}(M)$. 
Then $\Omega^\cdot(M)$ is a supercommutative superalgebra, 
i.e., $[\alpha_1,\alpha_2]=0$ for $\alpha_1,\alpha_2\in\Omega^\cdot(M)$.
Let $F$ be a complex vector bundle over $M$.
We denote by $\Omega^\cdot(M,F)$ the vector space of differential forms on $M$ with values in $F$. 
We equip $\Omega^\cdot(M,F)$ with the $\mathbb{Z}_2$-grading  $\Omega^\mathrm{even/odd}(M,F)$.
Then $\Omega^\cdot(M,F)$ is a super $\Omega^\cdot(M)$-module. 

\subsection{Clifford algebras}
\label{dilab-subsect-clifford-pre}

\

Let $V$ be a real vector space. 
Let $g^{V}$ be an Euclidean metric on $V$. 
Let 
\begin{equation}
\bigotimes V := \bigoplus_{j=0}^\infty V^{\otimes j}
\end{equation}
be the tensor algebra of $V$.

\begin{defn}
Let $I\subseteq\bigotimes V$ be the bi-ideal generated by
\begin{equation}
u \otimes v + v \otimes u + 2 g^V(u,v) \;,\hspace{5mm} u,v\in V \;.
\end{equation}
Set
\begin{equation}
C(V,g^V) = \big( \bigotimes V \big) / I \;,
\end{equation}
called the Clifford algebra associated with $(V,g^V)$. 
\end{defn}

Let 
\begin{equation}
c : V \rightarrow C(V,g^V)
\end{equation}
be the map induced by the canonical injection $V\rightarrow \bigotimes V$. 
For $u,v\in V$, we have
\begin{equation}
c(u)c(v) + c(v)c(u) = -2g^V(u,v) \;.
\end{equation}

Let $e_1,\cdots,e_n\in V$ be an orthogonal basis of $V$. 
Then
\begin{equation}
\label{dilab-eq-basis-clifford-alg}
c(e_{j_1})c(e_{j_2})\cdots c(e_{j_r}) \;,\hspace{5mm}
0 \leqslant r \leqslant n \;,\; j_1<j_2<\cdots<j_r \;,
\end{equation}
is a basis of  $C(V,g^V)$.
Let $C^\pm(V,g^V)\subseteq C(V,g^V)$ be the vector subspace spanned by the terms in 
\eqref{dilab-eq-basis-clifford-alg}
with $r$ even/odd. 
Then $C(V,g^V)$ becomes a superalgebra.

Now we suppose that $V$ is equipped with a complex structure $J\in\mathrm{End}(V)$ 
and that $g^V$ is $J$-invariant, i.e., 
$g^V(\cdot,\cdot) = g^V(J\cdot,J\cdot)$.
Set
\begin{equation}
V_\mathbb{C} = V\otimes_\mathbb{R}\mathbb{C} \;.
\end{equation}
The action of $J$ extends $\mathbb{C}$-linearly to $V_\mathbb{C}$. 
The Euclidean metric $g^V$ extends to a $\mathbb{C}$-bilinear form on $V_\mathbb{C}$.

Set
\begin{equation}
V_\mathbb{C}^{1,0} = \Big\{ v\in V_\mathbb{C}\;:\;Jv=iv\Big\} \;,\hspace{5mm}
V_\mathbb{C}^{0,1} = \Big\{ v\in V_\mathbb{C}\;:\;Jv=-iv\Big\} \;.
\end{equation}
We have 
\begin{equation}
V_\mathbb{C} = V_\mathbb{C}^{1,0} \oplus V_\mathbb{C}^{0,1} \;. 
\end{equation}
For $v\in\mathbb{V}_\mathbb{C}$, 
let $v^{(1,0)}$ (resp. $v^{(0,1)}$) be its component in $V_\mathbb{C}^{1,0}$ (resp. $V_\mathbb{C}^{0,1}$).

Let $V_\mathbb{C}^*$ be the vector space of $\mathbb{R}$-linear forms on $V_\mathbb{C}$. 
For $v\in V_\mathbb{C}$, let $v^*\in V_\mathbb{C}^*$ be its dual (with respect to $g^V$).

Set
\begin{equation}
V_\mathbb{C}^{*,1,0} = \Big\{ f\in V_\mathbb{C}^*\;:\;f\circ J=if\Big\} \;,\hspace{5mm}
V_\mathbb{C}^{*,0,1} = \Big\{ f\in V_\mathbb{C}^*\;:\;f\circ J=-if\Big\} \;.
\end{equation}
For $v\in V_\mathbb{C}^{1,0}$ (resp. $v\in V_\mathbb{C}^{0,1}$), 
we have $v^*\in V_\mathbb{C}^{*,0,1}$ (resp. $v^*\in V_\mathbb{C}^{*,1,0}$).

For $v\in V_\C$, 
we define the product operator
\begin{align}
\begin{split}
v^*\wedge : \; \Lambda^k V_\C^* & \rightarrow \Lambda^{k+1} V_\C^* \\
\alpha & \mapsto v^*\wedge\alpha\;,
\end{split}
\end{align}
and the contraction operator
\begin{align}
\begin{split}
i_v : \;\Lambda^k V_\C^* & \rightarrow \Lambda^{k-1} V_\C^* \\
\alpha & \mapsto \big((u_1,\cdots,u_{k-1}) \mapsto \alpha(v,u_1,\cdots,u_{k-1})\big) \;.
\end{split}
\end{align}
Set
\begin{align}
\label{dilab-eq-pre-clifford-complex-rep}
\begin{split}
c : V & \rightarrow \mathrm{End}\big(\Lambda^\cdot (V_\mathbb{C}^{*,0,1}) \big) \\
v & \mapsto v^{(1,0),*}\!\wedge - i_{v^{(0,1)}} \;.
\end{split}
\end{align}
For $u,v\in V$, we have
\begin{equation}
c(u)c(v)+c(v)c(u)+g^V(u,v) = 0 \;.
\end{equation}
Thus $c$ extends to a representation
\begin{equation}
c \; : \; C\big(V,\frac{1}{2}g^V\big) \rightarrow \mathrm{End}\big(\Lambda^\cdot (V_\mathbb{C}^{*,0,1}) \big) \;.
\end{equation}

\subsection{Even/odd characteristic classes}
\label{dilab-subsec-evenodd-char}
\

Let $M$ be a smooth manifold. 
Let $F$ be a complex vector bundle over $M$ of rank $r$. 

Let $\n^{F}$ be a connection on $F$. Then $\n^{F}$ induces a 
differential operator 
\begin{equation}
\n^F : \Omega^{\cdot}(M,F)\rightarrow \Omega^{\cdot+1}(M,F) \;.
\end{equation}
Let
\begin{equation}
\n^{F,2} \in \Omega^2(M,\mathrm{End}(F))
\end{equation}
be the curvature of $\n^F$.

For $\omega\in\Omega^k(M)$, put
\begin{equation}
\varphi\omega = (2\pi i)^{-k/2} \omega \;.
\end{equation}

Let $\tr\big[\cdot\big] : \mathrm{End}(F) \rightarrow \mathbb{C}$ be the trace map, 
which extends to
\begin{equation}
\tr\big[\cdot\big] : \Omega^\cdot(M,\mathrm{End}(F)) \rightarrow \Omega^\cdot(M) 
\end{equation}
such that for $\alpha\in\Omega^\cdot(M)$, $A\in\smooth(M,\mathrm{End}(F))$,
\begin{equation}
\tr\big[\omega A\big] = \omega \tr\big[ A \big].
\end{equation}

Let $P$ be an invariant polynomial on $\mathfrak{gl}(r,\C)$.

\begin{thm}[Chern-Weil]
\label{dilab-intro-thm-chern-weil}
The differential form 
\begin{equation}
\varphi P(-\nabla^{F,2})\in\Omega^\mathrm{even}(M)
\end{equation}
is closed. 
The cohomology class 
\begin{equation}
P(F) := \left[ \varphi P(-\nabla^{F,2}) \right] \in H^\mathrm{even}(M)
\end{equation}
is independent of $\n^F$.
\end{thm}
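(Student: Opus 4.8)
The plan is to prove this classical Chern--Weil theorem by the standard two-step argument: first closedness, then a transgression formula yielding independence of the connection. Throughout I work with the extension of $P$ and of its polarization $\langle P'(\cdot),\cdot\rangle$ from $\mathfrak{gl}(r,\mathbb{C})$ to the supercommutative setting of $\Omega^\cdot(M,\mathrm{End}(F))$-valued arguments, where products of matrix factors are accompanied by wedge products of the form parts. Two purely algebraic inputs drive everything. The first is the Bianchi identity $[\nabla^F,\nabla^{F,2}]=0$, which is immediate since $\nabla^{F,2}=\nabla^F\circ\nabla^F$ commutes with $\nabla^F$. The second is the infinitesimal invariance of $P$: differentiating $P(gag^{-1})=P(a)$ along a one-parameter subgroup $g=\exp(tb)$ yields $\langle P'(a),[b,a]\rangle=0$ for all $a,b\in\mathfrak{gl}(r,\mathbb{C})$, an identity that propagates, with the Koszul signs of the supercommutator, to form-valued arguments.

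For closedness, I first record the basic identity $d\,\tr\big[A\big]=\tr\big[[\nabla^F,A]\big]$ for $A\in\Omega^\cdot(M,\mathrm{End}(F))$, where $[\nabla^F,\cdot]$ denotes the covariant derivative. Writing $P(-\nabla^{F,2})$ as a polynomial in the curvature and applying the graded Leibniz rule gives
\[
d\,P(-\nabla^{F,2})=\big\langle P'(-\nabla^{F,2}),\,[\nabla^F,-\nabla^{F,2}]\big\rangle,
\]
which vanishes by the Bianchi identity. Hence $P(-\nabla^{F,2})$ is closed, and since $\varphi$ is a fixed rescaling of each form degree one has $d\,\varphi P(-\nabla^{F,2})=(2\pi i)^{1/2}\varphi\, d\,P(-\nabla^{F,2})=0$, so $\varphi P(-\nabla^{F,2})$ is closed as well.

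For independence of the connection, let $\nabla^F_0,\nabla^F_1$ be two connections, set $\theta=\nabla^F_1-\nabla^F_0\in\Omega^1(M,\mathrm{End}(F))$, and interpolate by $\nabla^F_u=\nabla^F_0+u\theta$. A direct computation gives $\frac{d}{du}\nabla^{F,2}_u=[\nabla^F_u,\theta]$, and combining the graded Leibniz rule with infinitesimal invariance yields, up to sign conventions,
\[
\frac{d}{du}P(-\nabla^{F,2}_u)=-\,d\big\langle P'(-\nabla^{F,2}_u),\theta\big\rangle.
\]
Integrating over $u\in[0,1]$ exhibits $P(-\nabla^{F,2}_1)-P(-\nabla^{F,2}_0)$ as an exact form; applying $\varphi$ and absorbing the resulting power of $(2\pi i)$ into the transgression form shows the same for $\varphi P$, so the class $P(F)$ is well defined. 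The argument is entirely routine; the only point that requires care is the bookkeeping, namely checking that the extension of $\langle P'(\cdot),\cdot\rangle$ to $\Omega^\cdot(M,\mathrm{End}(F))$ obeys the graded Leibniz rule and that the Koszul signs in the supercommutators are consistent, so that the scalar infinitesimal-invariance identity may legitimately be applied to form-valued arguments. This sign bookkeeping, rather than any conceptual difficulty, is the main thing to get right.
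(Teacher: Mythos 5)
Your proof is correct and is the standard Chern--Weil argument (closedness via the Bianchi identity and invariance of $P$, independence of $\nabla^F$ via the Chern--Simons transgression form); the paper states this theorem in its preliminaries without proof, citing \cite{bgv}, and the same transgression identity $\frac{\partial}{\partial t}P(-A^{E,2}_t)=-d\,\langle P'(-A^{E,2}_t),\frac{\partial}{\partial t}A^E_t\rangle$ is exactly what the paper invokes later (e.g.\ in the proof of Proposition \ref{dilab-prop-class-odd}). Your normalization bookkeeping $d\varphi=(2\pi i)^{1/2}\varphi\,d$ is also consistent with the paper's conventions, so there is nothing to add.
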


Now we assume that $\n^F$ is a flat connection, i.e., $\n^{F,2} = 0$. 
Then 
\begin{equation}
\varphi P(-\nabla^{F,2}) = P(0) 
\end{equation} 
is just a constant function on $M$.

For flat vector bundles, there are non trivial characteristic classes of odd degree. 
We will follow the construction of Bismut-Lott \cite[\textsection 1]{bl}. 

Let $g^F$ be a Hermitian metric on $F$. 
Let $\n^{F,*}$ be the adjoint connection, i.e.,
for $\sigma_1,\sigma_2\in\smooth(M,F)$ and $U\in\smooth(M,TM)$, we have
\begin{equation}
g^F(\n^F_U \sigma_1,\sigma_2) + g^F(\sigma_1,\n^{F,*}_U \sigma_2) = U g^F(\sigma_1,\sigma_2) \;.
\end{equation} 
Then $\n^{F,*,2} = 0$.

Set
\begin{equation}
\omega^F = \n^{F,*} - \n^F \in \Omega^1(M,\mathrm{End}(F)) \;.
\end{equation} 

Let $f$ be an odd polynomial in one variable with complex coefficients.
Set
\begin{equation}
f(F,\n^F,g^F) = \sqrt{2\pi i} \varphi \tr \big[f(\omega^F/2)\big] \in \Omega^\mathrm{odd}(M) \;.
\end{equation}

The following theorem was established by Bismut-Lott \cite[Theorem 1.8]{bl}.

\begin{thm}
The differential form 
\begin{equation}
f(F,\n^F,g^F) \in \Omega^\mathrm{odd}(M)
\end{equation}
is closed. 
The cohomology class
\begin{equation}
f(F,\n^F) := \left[ f(F,\n^F,g^F) \right] \in H^\mathrm{odd}(M)
\end{equation}
is independent of $g^F$.
\end{thm}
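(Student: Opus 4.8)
The plan is to exploit the two structural identities that the flatness of $\n^F$ forces on the unitary part of the connection. I would set $\n^{F,u} = \tfrac{1}{2}\big(\n^F + \n^{F,*}\big)$, the unique $g^F$-unitary connection on $F$, so that $\n^F = \n^{F,u} - \tfrac{1}{2}\omega^F$ and $\n^{F,*} = \n^{F,u} + \tfrac{1}{2}\omega^F$, with $\omega^F$ being $g^F$-self-adjoint. Expanding the two identities $\n^{F,2} = 0$ and $\n^{F,*,2} = 0$ in terms of $\n^{F,u}$ and $\omega^F$, then adding and subtracting, I would obtain
\begin{equation}
\n^{F,u}\omega^F = 0 \;, \qquad \n^{F,u,2} = -\tfrac{1}{4}\big(\omega^F\big)^2 \;.
\end{equation}
The first identity is the engine of the whole argument: $\omega^F$ is parallel for $\n^{F,u}$.

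For the closedness, I would use that for any $X \in \Omega^\cdot(M,\End(F))$ one has $d\,\tr\big[X\big] = \tr\big[\n^{F,u}X\big]$, since the trace annihilates the graded commutator coming from the connection form. As $f$ is a polynomial, $f(\omega^F/2)$ is a polynomial in $\omega^F$, so $\n^{F,u}\big(f(\omega^F/2)\big) = 0$ by the Leibniz rule together with $\n^{F,u}\omega^F = 0$. Hence $d\,\tr\big[f(\omega^F/2)\big] = 0$. Since $\varphi$ satisfies $\varphi\, d = (2\pi i)^{-1/2} d\,\varphi$, the rescaled form $\sqrt{2\pi i}\,\varphi\,\tr\big[f(\omega^F/2)\big]$ is again closed. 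The oddness of $f$ is what places $\tr\big[f(\omega^F/2)\big]$ in $\Omega^\mathrm{odd}(M)$, since only odd powers of the $1$-form $\omega^F$ occur and they contribute odd-degree forms.

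For the independence of $g^F$, I would take a smooth family $g^F_u$, $u\in[0,1]$, and set $h_u = (g^F_u)^{-1}\partial_u g^F_u \in \smooth(M,\End(F))$. Working in a $\n^F$-parallel local frame, where $\n^F = d$ and $\omega^F_u$ is the logarithmic derivative of the matrix of $g^F_u$, a direct computation yields the variational formula $\partial_u\omega^F_u = \n^{F,*}h_u = \n^{F,u}h_u + \tfrac{1}{2}[\omega^F_u, h_u]$. Differentiating the trace and using graded cyclicity,
\begin{equation}
\partial_u\,\tr\big[f(\omega^F_u/2)\big] = \tr\Big[f'(\omega^F_u/2)\,\tfrac{1}{2}\partial_u\omega^F_u\Big] \;.
\end{equation}
Now $f'$ is even, so $f'(\omega^F_u/2)$ is an even-degree form and a polynomial in $\omega^F_u$, whence its graded commutator with $\omega^F_u$ vanishes; this kills the $[\omega^F_u, h_u]$ contribution. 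What remains is $\tfrac{1}{2}\tr\big[f'(\omega^F_u/2)\,\n^{F,u}h_u\big]$, and because $\n^{F,u}f'(\omega^F_u/2) = 0$ this equals $d\,\tr\big[\tfrac{1}{2}f'(\omega^F_u/2)h_u\big]$. Integrating over $u$ and applying $\sqrt{2\pi i}\,\varphi$ exhibits $f(F,\n^F,g^F_1) - f(F,\n^F,g^F_0)$ as an exact form, so the class in $H^\mathrm{odd}(M)$ is independent of $g^F$.

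The main obstacle is the bookkeeping of graded signs. Establishing $d\,\tr\big[\cdot\big] = \tr\big[\n^{F,u}\,\cdot\big]$ and the variation formula for $\tr\big[f(\omega^F/2)\big]$ relies on the graded cyclicity $\tr\big[\alpha\beta\big] = (-1)^{|\alpha||\beta|}\tr\big[\beta\alpha\big]$ for form-valued endomorphisms, and it is precisely the oddness of $f$, hence the evenness of $f'$, that forces the graded commutator term to disappear. Deriving $\partial_u\omega^F_u = \n^{F,*}h_u$ cleanly, rather than through a tangle of coordinate computations, is the other point needing care.
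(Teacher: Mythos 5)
Your proof is correct. The paper does not actually prove this statement --- it simply quotes it as \cite[Theorem 1.8]{bl} --- and your argument is essentially the standard Bismut--Lott one: deriving $\n^{F,u}\omega^F=0$ from the flatness of $\n^F$ and $\n^{F,*}$ to get closedness, and using the variation formula $\partial_u\omega^F_u=\n^{F,*}h_u$ together with the evenness of $f'$ to exhibit the transgression term $d\,\tr\big[\tfrac{1}{2}f'(\omega^F_u/2)h_u\big]$.
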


\begin{rem}
If $f$ is an even polynomial, 
by \cite[Proposition 1.3]{bl}, 
we have
\begin{equation}
\tr \big[f(\omega^F)\big] = f(0) r \;.
\end{equation}
\end{rem}

\subsection{Fibrations equipped with a connection and a fiberwise metric}
\label{dilab-subsec-fibration-connection-metric}

\

Let $\pi : \mathcal{N} \rightarrow M$ be a smooth fibration with compact 
fiber $N$.

Let $TN$ be the relative tangent bundle of the fibration.
We equip the fibration with a connection, i.e., 
a smooth splitting
\begin{equation}
\label{dilab-eq-pre-connection-fibration}
T\mathcal{N} = T^H\mathcal{N} \oplus TN \;.
\end{equation}
Then
$T^H\mathcal{N}\simeq\pi^*TM$. 
Let
\begin{equation}
P^{TN} : T\mathcal{N} \rightarrow TN \;,\hspace{5mm}
P^{T^H\mathcal{N}} : T\mathcal{N} \rightarrow T^H\mathcal{N}
\end{equation}
be the projections. 
For $U\in TM$, 
let $U^H\in T^H\mathcal{N}$ be the lift of $U$, i.e., $\pi_*U^H = U$.
 
For $U,V$ vector fields on $M$, set
\begin{equation}
\label{dilab-eq-pre-def-T}
T(U,V) = [U,V]^H - [U^H,V^H] \;.
\end{equation}
We have $T\in\Omega^2(M,\smooth(N,TN))$. 
We call $T$ the curvature of the fibration.

We equip $TM$ and $TN$ with Riemannian metrics $g^{TM}$ and $g^{TN}$.
Let $\pi^* g^{TM}$ be the induced metric on $T^H\mathcal{N}$.
Set
\begin{equation}
g^{T\mathcal{N}} = \pi^* g^{TM} \oplus g^{TN} \;,
\end{equation} 
which is a Riemannian metric on $g^{T\mathcal{N}}$.  
Let $\left\langle\cdot,\cdot\right\rangle$ be the corresponding scalar product.

Let $\n^{T\mathcal{N}}$ be the Levi-Civita connection on $T\mathcal{N}$ associated with $g^{T\mathcal{N}}$. 

\begin{defn}
Let $\nabla^{TN}$ be the connection on $TN$ defined by
\begin{equation}
\n^{TN} = P^{TN} \n^{T\mathcal{N}} P^{TN}.
\end{equation}
\end{defn}
Then  $\n^{TN}$ is independent of  $g^{TM}$ (cf. \cite[\textsection 1(c)]{b}). 

Let $L_\cdot$ be the Lie derivative.
For $U$ a vector field on $M$, set
\begin{equation}
\omega^{TN}(U) = (g^{TN})^{-1} L_{U^H} g^{TN} \in \smooth(\mathcal{N},\mathrm{End}(TN)) \;.
\end{equation} 
If $V\in TN$, then $\n^{TN}_V$ coincides with the usual Levi-Civita 
connection along the fiber $N$.
If $U\in TM$, then (cf. \cite[\textsection 1(c)]{b})
\begin{equation}\label{eq:corr1}
\n^{TN}_{U^H} = L_{U^H} + \frac{1}{2}\omega^{TN}(U) \;.
\end{equation}

Put
\begin{equation}
\n^{T\mathcal{N},\oplus} = P^{TN} \n^{T\mathcal{N}} P^{TN} \oplus P^{T^H\mathcal{N}} \n^{T\mathcal{N}} 
P^{T^H\mathcal{N}}.
\end{equation}

\begin{defn}\label{defSTX}
For $U\in T\mathcal{N}$, set
\begin{equation}
\label{dilab-eq-pre-second-form}
S^{TN}(U) = \nabla^{T\mathcal{N}}_U - \nabla^{T\mathcal{N},\oplus}_U 
\in \smooth(\mathcal{N}, \mathrm{End}(T\mathcal{N})) \;.
\end{equation}
\end{defn}
Then $\left\langle  S^{TN}\left(\cdot\right)\cdot,\cdot\right\rangle$ 
is independent of $g^{TM}$ (cf. \cite[\textsection 1(c)]{b}).

\section{The Chern-Weil theory of a flat fibration}
\label{dilab-sec-charac}

The purpose of this section is to construct characteristic 
classes and characteristic forms on the total space of a flat 
fibration with compact complex fibers.
This section is organized as follows. 

In \textsection \ref{dilab-subsec-conseq-chern-weil}, 
we state a consequence of the Chern-Weil theory, 
which will be of constant use in the rest of this section.

In \textsection \ref{dilab-subsection-a-flat-fibration}, 
we define a flat fibration with complex fibers.

In \textsection \ref{dilab-subsection-a-vector-and-superconnection}, 
we construct a complex vector bundle $E$ over the total space of fibration.

In \textsection \ref{dilab-subsection-the-connections-and-supperconnections-to-metric},
we construct connections on $E$. 
In particular, given a Hermitian metric on $E$, 
we construct a unitary connection on $E$ 
and show that the integral along the fiber of the usual 
Chern-Weil forms associated with this connection vanishes in positive 
degree.

In \textsection \ref{dilab-subsection-chern-simons-char}, 
we construct odd characteristic forms. 
These characteristic forms will appear on the right-hand side 
of the Riemann-Roch-Grothendieck formula in \textsection \ref{dilab-section-rrg}.

In \textsection \ref{dilab-subsect-mul-odd-ch}, 
we construct a natural  multiplication of the odd characteristic forms. 

\subsection{A consequence of  Chern-Weil theory}
\label{dilab-subsec-conseq-chern-weil}

\

Let $N$ be a smooth compact oriented manifold. 
Let $\big(\Omega^{\cdot}(N),d_N\big)$ 
be the de Rham complex of smooth differential forms on $N$.
We denote by $H^{\cdot}(N)$ its cohomology. 

Let $V$ be a finite dimensional real vector space. 

We will replace the de Rham complex  $\big(\Omega^{\cdot}(N),d_N\big)$ 
by the twisted de Rham complex $\big(\Omega^{\cdot}(N,\Lambda^\cdot (V^*)),d_N\big)$.
Its cohomology is equal to $H^{\cdot}(N)\widehat{\otimes}\Lambda^{\cdot}(V^*)$. 

Let $\big(\Omega^{\cdot}(N\times V), d_{N\times V}\big) $ be the de Rham complex of $N\times V$. 
Then $\big(\Omega^{\cdot}(N,\Lambda^\cdot (V^*)),d_N\big)$ can 
be identified with the subcomplex of $\big(\Omega^{\cdot}(N\times V), d_{N\times V}\big) $ 
that consists of forms which are constant along $V$. 

Let $p : N\times V \rightarrow N$ and $q : N\times V \rightarrow V$ 
be the  natural projections. 
Let $q_{*}$ denote the integral along the fiber $N$, 
i.e., for $\alpha\in\Omega^\cdot(V)$ and $\beta\in\Omega^\cdot(N)$,
\begin{equation}
q_*[\alpha\wedge\beta] = \alpha\int_N\beta \;,
\end{equation}
By restricting $q_*$ to forms which are constant along $V$, we get a 
map
\begin{equation}
q_* : \Omega^{\cdot}(N,\Lambda^\cdot (V^*)) \rightarrow \Lambda^\cdot (V^*) \;.
\end{equation}

Let $E$ be a complex vector bundle of rank $r$ over $N$. 
Let $\nabla^{E}$ be a connection on $E$. 
Its curvature $\nabla^{E,2}$ is a smooth section of $\Lambda^2(T^*N) \otimes \text{End}(E)$. 
The vector bundle $E$ lifts to the vector bundle $p^*E$ on $N\times V$, 
and $\nabla^{E} $ lifts to a connection on $p^*E$, 
which is still denoted by $\nabla^E$. 
Let $S$ be a smooth section on $N$ of $V^*\otimes \text{End}(E)$. 
We can view $S$ as a section of $V^{*}\otimes\text{End}(E)$ on $N\times V$, 
which is constant along $V$. 
Then $\nabla^{E}+S$ is also a connection on $p^*E$.  
Its curvature $(\nabla^E+S)^2$ is a smooth section 
of $\Big(\Lambda^\cdot(T^*N)\widehat{\otimes}\Lambda^{\cdot}(V^*)\Big)^\mathrm{even}\otimes\text{End}(E)$ over $N\times V$, 
which is constant along $V$.

The following proposition is a direct consequence of  Chern-Weil 
theory.

\begin{prop}
\label{dilab-prop-conseq-chern-weil}
For any invariant complex polynomial $P$ on $\mathfrak{gl}(r,\mathbb{C})$, 
\begin{equation}
P\big(-(\nabla^E+S)^2\big) \in \Omega^\cdot(N,\Lambda^\cdot (V^*)) 
\end{equation}
is closed. 
Its cohomology class
\begin{equation}
\left[P\big(-(\nabla^E+S)^2\big)\right] \in H^\cdot(N)\widehat{\otimes}\Lambda^\cdot(V^*)
\end{equation}
is independent of $\nabla^E$ and $S$. 
In particular,
\begin{equation}
\left[P\big(-(\nabla^E+S)^2\big)\right] \in H^\cdot(N)\subseteq H^\cdot(N)\widehat{\otimes}\Lambda^\cdot(V^*) \;.
\end{equation}
\end{prop}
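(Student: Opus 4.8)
The plan is to regard everything as living on the product $N\times V$, where $\nabla := \nabla^E+S$ is a genuine connection on $p^*E$, and to read off the assertions from ordinary even Chern-Weil theory (Theorem \ref{dilab-intro-thm-chern-weil}) together with the transgression underlying it, while keeping careful track of the fact that all the forms involved are constant along $V$.

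First I would establish closedness. As already observed, the curvature $(\nabla^E+S)^2$ is constant along $V$, hence so is $P\big(-(\nabla^E+S)^2\big)$, and therefore this form lies in the subcomplex $\Omega^\cdot(N,\Lambda^\cdot(V^*))$ of $\Omega^\cdot(N\times V)$. By Chern-Weil theory applied to $p^*E$ over $N\times V$ (Theorem \ref{dilab-intro-thm-chern-weil}, the normalization $\varphi$ being irrelevant here since it rescales each degree by a nonzero constant), $P\big(-(\nabla^E+S)^2\big)$ is $d_{N\times V}$-closed. Since $d_{N\times V}$ restricts to $d_N$ on forms constant along $V$, the form is $d_N$-closed, which is the first assertion.

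The main point is the independence statement, and here one must resist simply quoting the invariance part of Theorem \ref{dilab-intro-thm-chern-weil}: that invariance holds in $H^\cdot(N\times V)=H^\cdot(N)$, whereas the cohomology of the subcomplex is the strictly larger $H^\cdot(N)\widehat{\otimes}\Lambda^\cdot(V^*)$, and the inclusion of the subcomplex is not a quasi-isomorphism. Instead I would rerun the transgression inside the subcomplex. Given two data $(\nabla^E_0,S_0)$ and $(\nabla^E_1,S_1)$, set $\nabla_i=\nabla^E_i+S_i$, $\theta=\nabla_1-\nabla_0$, and $\nabla_u=\nabla_0+u\theta$, and use the standard transgression identity
\begin{equation}
P\big(-\nabla_1^2\big)-P\big(-\nabla_0^2\big) = d_{N\times V}\left(-\int_0^1\Big\langle P'\big(-\nabla_u^2\big),\theta\Big\rangle\,du\right) \;.
\end{equation}
The crucial observation is that $\theta=(\nabla^E_1-\nabla^E_0)+(S_1-S_0)$ and each $\nabla_u^2$ are constant along $V$, so the transgression form on the right is constant along $V$, i.e.\ lies in the subcomplex, where $d_{N\times V}=d_N$. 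Hence the two classes coincide in $H^\cdot(N)\widehat{\otimes}\Lambda^\cdot(V^*)$, giving independence of $\nabla^E$ and $S$.

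Finally, to see that the class actually lies in $H^\cdot(N)\subseteq H^\cdot(N)\widehat{\otimes}\Lambda^\cdot(V^*)$, I would evaluate it on the distinguished representative $S=0$, with $\nabla^E$ any connection on $E$ over $N$. Then $\nabla^2=p^*\nabla^{E,2}$ has no $V^*$-component, so $P(-\nabla^2)=p^*P\big(-\nabla^{E,2}\big)$ lies in $\Omega^\cdot(N)$ and its class is manifestly in the factor $H^\cdot(N)$. By the independence just proved, the class attached to arbitrary $(\nabla^E,S)$ equals this one, hence lies in $H^\cdot(N)$. The only genuine obstacle is the point flagged above: verifying that the transgression form stays within the subcomplex, so that invariance is obtained in the finer cohomology $H^\cdot(N)\widehat{\otimes}\Lambda^\cdot(V^*)$ and not merely in $H^\cdot(N\times V)$; everything else is the verbatim even Chern-Weil computation.
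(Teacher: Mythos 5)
Your proof is correct and follows exactly the route the paper intends: the paper offers no written argument beyond calling the proposition ``a direct consequence of Chern-Weil theory,'' and your write-up supplies the standard Chern--Weil closedness plus transgression argument on $N\times V$. You also correctly identify and resolve the one genuine subtlety --- that the transgression form $-\int_0^1\big\langle P'(-\nabla_u^2),\theta\big\rangle\,du$ stays in the subcomplex of forms constant along $V$, so that invariance holds in $H^\cdot(N)\widehat{\otimes}\Lambda^\cdot(V^*)$ and not merely in $H^\cdot(N\times V)=H^\cdot(N)$ --- which is precisely what makes the assertion nontrivial.
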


\subsection{A flat complex fibration}
\label{dilab-subsection-a-flat-fibration}

\

Let $G \index{G@$G$}$ be a Lie group. Let $N \index{Nm@$N$}$ be a compact complex manifold of dimension $n$. 
We assume that $G$ acts holomorphically on $N$. 
Let $M \index{M@$M$}$  be a real manifold. Let $p : P_G\rightarrow M \index{PG@$P_G$}$  be a principal $G$-bundle equipped with a connection. 
Set
\begin{equation}
\mathcal{N} = P_G\times_G N \;. 
\index{Nmathcal@$\mathcal{N}$}
\end{equation} 
Let $q : \mathcal{N}\rightarrow M \index{q@$q$}$ be the natural projection, 
which defines a fibration with  canonical fiber $N$. 

Let $T_\mathbb{R}N$ be the real tangent bundle of $N$.
Set $T_\mathbb{C}N = T_\mathbb{C}N \otimes_\mathbb{R} \mathbb{C}$.

The connection on $P_G$ induces a connection on the fibration $q : \mathcal{N}\rightarrow M$, 
i.e., a splitting
\begin{equation}
\label{dilab-eq-splitting-tangent-bundle}
T\mathcal{N} = T_{\mathbb{R}}N \oplus T^H\mathcal{N} \index{TNmathcalH@$T^H\mathcal{N}$} \;.
\end{equation}
Then $T^H\mathcal{N}\simeq q^*TM$. 
The splitting \eqref{dilab-eq-splitting-tangent-bundle} induces the following identification
\begin{equation}
\label{dilab-eq-double-splitting-forms}
\Lambda^\cdot(T_\mathbb{C}^*\mathcal{N}) = 
\Lambda^\cdot(T^*_\mathbb{C}N) \widehat{\otimes}
q^*\Lambda^\cdot(T^*_\mathbb{C}M) \;.
\end{equation}
Let $TN$ be the holomorphic tangent bundle of $N$. 
Using the splitting $T_\mathbb{C}N = TN \oplus \overline{TN}$, 
we get a further splitting
\begin{equation}
\label{dilab-eq-triple-splitting-forms}
\Lambda^\cdot(T_\mathbb{C}^*\mathcal{N}) = 
\Lambda^\cdot(T^*N) \widehat{\otimes}
\Lambda^\cdot(\overline{T^*N}) \widehat{\otimes}
q^*\Lambda^\cdot(T^*_\mathbb{C}M) \;.
\end{equation}
Put
\begin{equation}
\label{dilab-eq2-triple-splitting-forms}
\Omega^{(p,q,r)}(\mathcal{N}) = \smooth\big(\mathcal{N},
\Lambda^p(T^*N) \widehat{\otimes}
\Lambda^q(\overline{T^*N}) \widehat{\otimes}
q^*\Lambda^r(T^*_\mathbb{C}M)\big) 
\index{OmegaNmathcal@$\Omega^{(p,q,r)}(\mathcal{N})$} \;.
\end{equation} 
Then
\begin{equation}
\label{dilab-eq3-triple-splitting-forms}
\Omega^k(\mathcal{N}) = \bigoplus_{p+q+r=k} \Omega^{(p,q,r)}(\mathcal{N}) \;.
\end{equation}

In the sequel, we assume that the connection on $P_G$ is flat. 
Then $q:\mathcal{N}\rightarrow M$ is a flat fibration, 
i.e., its curvature $T=0$ (cf. \eqref{dilab-eq-pre-def-T}).

Let $d_N$ be the de Rham operator on $\Omega^\cdot(N)$.
Let $d_M$ be the de Rham operator on $\Omega^\cdot(M)$,
which lifts to $\Omega^\cdot(\mathcal{N})$ in the following sense : 
let $(f_\alpha)$ be a basis of $TM$, 
let $(f^\alpha)$ be the dual basis of $T^*M$.
then
\begin{equation}
d_M = \sum_\alpha (q^*f^\alpha)\wedge L_{f_\alpha^H} \;.
\end{equation}
Let $d_{\mathcal{N}}$ \index{dNmathcal@$d_{\mathcal{N}}$} be the de Rham operator on $\mathcal{N}$. 
Since $T=0$, by \cite[Proposition 3.4]{bl}, 
we get 
\begin{equation}
\label{dilab-eq-double-splitting-derham}
d_{\mathcal{N}} = d_N + d_M 
\index{dN@$d_N$} \index{dM@$d_M$} \;.
\end{equation}

Let $\partial_N \index{dN partial@$\partial_N$}$  (resp. $\overline{\partial}_N \index{dN partialoverline@$\overline{\partial}_N$}$) 
be the holomorphic (resp. anti-holomorphic) Dolbeault operator on $N$.  
We have
\begin{equation}
\label{dilab-eq-vertical-splitting-derham}
d_N = \partial_N + \overline{\partial}_N \;.
\end{equation} 

By (\ref{dilab-eq-double-splitting-derham}) and (\ref{dilab-eq-vertical-splitting-derham}), we get
\begin{equation}
\label{dilab-eq-triple-splitting-derham}
d_{\mathcal{N}} = \partial_N + \overline{\partial}_N + d_M \;.
\end{equation}

The following  relations hold,
\begin{align}
\begin{split}
& d_M^2 = d_N^2 = \partial_N^2 = \overline{\partial}_N^2 = 0 \;,\\
& \big[d_M,d_N\big] = \big[d_M,\partial_N\big] = \big[d_M,\overline{\partial}_N\big] =
\big[d_N,\partial_N\big] = \big[d_N,\overline{\partial}_N\big] = \big[\partial_N,\overline{\partial}_N\big] = 0 \;.
\end{split}
\end{align}

\subsection{A fiberwise holomorphic vector bundle}
\label{dilab-subsection-a-vector-and-superconnection}

\

Let $E_0\index{E0@$E_0$}$ be a holomorphic vector bundle over $N$ of 
rank $r$. We assume that the action of $G$ on $N$ lifts to a 
holomorphic action on $E_{0}$.
Set
\begin{equation}
E = P_G \times_G E_0 
\index{E@$E$}\;,
\end{equation}
which is a complex vector bundle over $\mathcal{N}$. 
Furthermore, $E$ is holomorphic along $N$.

Let $\overline{\partial}_N^E \index{dN partialoverline E@$\overline{\partial}_N^E$}$ be the fiberwise holomorphic structure of $E$. 
Let $d_M^E \index{dM E@$d_M^E$}$ be the lift of the de Rham operator on $M$ to 
$\Omega^\cdot(\mathcal{N},E)$.
We have
\begin{equation}
\label{dilab-eq-commute-n-E-NM}
\overline{\partial}_N^{E,2} = d_M^{E,2} = \big[\overline{\partial}_N^E,d_M^E\big] = 0 \;.
\end{equation}

\subsection{Connections}
\label{dilab-subsection-the-connections-and-supperconnections-to-metric}

\

Set
\begin{equation}
\label{dilab-eq-def-flat-superconnection}
{A^E}'' = \overline{\partial}^E_N + \diff^{E}_M \;,
\index{AE prim2@${A^E}''$}
\end{equation} 
which acts on $\Omega^\cdot(\mathcal{N},E)$. 
By \eqref{dilab-eq-commute-n-E-NM}, we have
\begin{equation}
\label{dilab-eq-AEsquare=0}
\big({A^E}''\big)^2 = 0 \;.
\end{equation}

Let $\overline{E}^* $ be the anti-dual vector bundle of $E$. 
When replacing the complex structure of $N$ by the conjugate complex structure, 
$\overline{E}^*$ enjoys exactly the same properties as $E$.  
We construct 
$\partial^{\overline{E}^*}_N$
$d_M^{\overline{E}^*}$ and
${A^{\overline{E}^*}}'$ in the same way as $\overline{\partial}^E_N$, $d_M^E$ and ${A^E}''$.
In particular, 
\begin{equation}
\label{dilab-eq-def-flat-superconnection-antidual}
{A^{\overline{E}^*}}' = \partial^{\overline{E}^*}_N + d_M^{\overline{E}^*} \;.
\end{equation}
Proceeding in the same way as in 
\eqref{dilab-eq-commute-n-E-NM} and \eqref{dilab-eq-AEsquare=0}, 
we  have
\begin{equation}
\label{dilab-eq-commute-n-Edual-NM}
\partial_N^{\overline{E}^*,2} = d_M^{\overline{E}^*,2} = \big[\overline{\partial}_N^{\overline{E}^*},d_M^{\overline{E}^*}\big] = 0 \;,
\end{equation}
and
\begin{equation}
\label{dilab-eq-AEdualsquare=0}
\big({A^{\overline{E}^*}}'\big)^2 = 0 \;.
\end{equation}

Let $g^E$ be a Hermitian metric on $E$. Then $g^E$ defines an isomorphism $g^E : E \rightarrow \overline{E}^*$.
Set
\begin{equation}
\label{dilab-eq-def-EN-EMu}
\partial^E_N =  (g^E)^{-1} \partial^{\overline{E}^*}_N g^E 
\index{dN partial E @$\partial^E_N$}\;,\hspace{5mm}
d_M^{E,*} = (g^E)^{-1} d_M^{\overline{E}^*} g^E 
\index{dM E star@$d_M^{E,*}$}\;,
\end{equation}
which act on $\Omega^\cdot(\mathcal{N},E)$.
By \eqref{dilab-eq-commute-n-Edual-NM} and \eqref{dilab-eq-def-EN-EMu}, we have
\begin{equation}
\label{dilab-eq-commute-n-E-NM-star}
\partial_N^{E,2} = d_M^{E,*,2} = \big[ \overline{\partial}^E_N,d_M^{E,*} \big] = 0 \;.
\end{equation}

Set
\begin{equation}
\label{dilab-eq-def-flat-superconnection-star}
{A^E}' = (g^E)^{-1}{A^{\overline{E}^*}}'g^E= \partial^E_N + d_M^{E,*} 
\index{AE prim@${A^E}'$}\;.
\end{equation} 
Then, by \eqref{dilab-eq-commute-n-E-NM-star}, we have
\begin{equation}
\label{dilab-eq-AEprimsquare=0}
\big({A^E}'\big)^2 = 0 \;.
\end{equation}

Let $N^{\Lambda^\cdot(T^*M)} \index{Nnumber M@$N^{\Lambda^\cdot(T^*M)}$}$ be the number operator of $\Lambda^\cdot(T^*M)$.

\begin{defn}
Set
\begin{align}
\label{dilab-eq-def-superconnection}
\begin{split}
& A^E = 2^{-N^{\Lambda^\cdot(T^*M)}} \big({A^E}' + {A^E}''\big) 2^{N^{\Lambda^\cdot(T^*M)}} 
\index{AE@$A^E$} \;,\\
& B^E = 2^{-N^{\Lambda^\cdot(T^*M)}} \big({A^E}' - {A^E}''\big) 2^{N^{\Lambda^\cdot(T^*M)}} 
\index{BE@$B^E$} \;.
\end{split}
\end{align}
\end{defn}

By \eqref{dilab-eq-AEsquare=0} and \eqref{dilab-eq-AEprimsquare=0}, we have
\begin{equation}
\label{dilab-eq-relation-A-B}
A^{E,2} = 2^{-N^{\Lambda^\cdot(T^*M)}}  \big[{A^E}',{A^E}''\big] 2^{N^{\Lambda^\cdot(T^*M)}}  = - B^{E,2} \;.
\end{equation}

Set
\begin{equation}\label{eq:corr2}
d_N^E = \partial^E_N + \overline{\partial}^E_N 
\index{dN E@$d_N^E$}\;,\hspace{5mm}
d_M^{E,\mathrm{u}} = \frac{1}{2}\big(d_M^E + d_M^{E,*}\big) 
\index{dM E uni@$d_M^{E,\mathrm{u}}$}\;.
\end{equation}
Then
\begin{equation}
\label{dilab-eq-explicite-A}
A^E = d_N^E + d_M^{E,\mathrm{u}} \;.
\end{equation}
Thus $A^E$ is a Hermitian connection on $E$ over $\mathcal{N}$. 

Set
\begin{equation}
\label{dilab-eq-def-omegaE}
\omega^E = d_M^{E,*} - d_M^E = \big(g^E\big)^{-1} d_M^E g^E \in 
\smooth\big(\mathcal{N}, T^*M \otimes _{\mathbb R}\mathrm{End}(E)\big) 
\index{omegaE@$\omega^E$}\;. 
\end{equation}
Then
\begin{equation}
\label{dilab-eq-explicite-B}
B^E = \partial^E_N - \overline{\partial}^E_N + \frac{1}{2}\omega^E \;.
\end{equation}
Thus $B^E\in\Omega^\cdot\big(M,\text{End}(\Omega^\cdot(N,E))\big)$.

\begin{prop}
\label{dilab-prop-annulation-curvature}
For any invariant polynomial $P$ on $\mathfrak{gl}(r,\mathbb{C})$, 
we have
\begin{equation}
\label{dilab-eq1-prop-annulation-curvature}
\big(\partial_N-\overline{\partial}_N\big)P\big(-A^{E,2}\big) = 0 \;.
\end{equation}  
Also 
\begin{equation}
\label{dilab-eq2-prop-annulation-curvature}
P\big(-A^{E,2}\big) - P\big(-d_N^{E,2}\big) \in \im \big(\partial_N-\overline{\partial}_N\big) \;.
\end{equation} 
As a consequence, we have
\begin{equation}
\label{dilab-eq3-prop-annulation-curvature}
q_*\big[P\big(-A^{E,2}\big)\big] = q_*\big[P\big(-d_N^{E,2}\big)\big] \;,
\end{equation}
which is a constant function on $M$.
\end{prop}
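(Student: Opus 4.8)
The plan is to transport the full curvature of $A^E$ onto the fiber so as to invoke the fiberwise Chern--Weil result of Proposition~\ref{dilab-prop-conseq-chern-weil}. By \eqref{dilab-eq-relation-A-B} we have $A^{E,2}=-B^{E,2}$, hence $P(-A^{E,2})=P(B^{E,2})$; I would work with $B^E$ rather than $A^E$ because, by \eqref{dilab-eq-explicite-B}, the horizontal part of $B^E$ is the zeroth order term $\frac12\omega^E\in\smooth(\mathcal{N},T^*M\otimes\End(E))$, while the horizontal part of $A^E$ differentiates along $M$. This is exactly the structure $\nabla^E+S$ treated in Proposition~\ref{dilab-prop-conseq-chern-weil}, with the role of $V$ played, at each $x\in M$, by $T_xM$ and $S=\frac12\omega^E$. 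The one discrepancy is that the vertical part $\partial^E_N-\overline{\partial}^E_N$ of $B^E$ is not a genuine connection.

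To remedy this, I would introduce the algebra automorphism $K=(-1)^{N^{\Lambda^\cdot(\overline{T^*N})}}$ of $\Omega^\cdot(\mathcal{N})$, which acts trivially on $\End(E)$, commutes with $\omega^E$, and satisfies $K\partial_N K^{-1}=\partial_N$, $K\overline{\partial}_N K^{-1}=-\overline{\partial}_N$, so that $K\big(\partial_N-\overline{\partial}_N\big)K^{-1}=d_N$. Conjugating \eqref{dilab-eq-explicite-B} then gives $K^{-1}B^E K=d^E_N+\frac12\omega^E=:R$, a fiberwise Chern connection plus a zeroth order horizontal term. Passing to curvatures, the form $-A^{E,2}=B^{E,2}$ is the image under $K$ of the curvature form of $R$, so since $K$ is an automorphism and $P$ is invariant, $P(-A^{E,2})=K\big[P(R^2)\big]=K\big[\widehat P(-R^2)\big]$, where $\widehat P(y):=P(-y)$.

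Next I would apply Proposition~\ref{dilab-prop-conseq-chern-weil} fiberwise to $R=d^E_N+\frac12\omega^E$ with the invariant polynomial $\widehat P$: the form $\widehat P(-R^2)$ is $d_N$-closed and $\widehat P(-R^2)-\widehat P(-d^{E,2}_N)\in\im\, d_N$, the transgressing form being the standard Chern--Simons form interpolating $S$ from $0$ to $\frac12\omega^E$, which is smooth over $M$. Transporting these two facts through $K$ and using $(\partial_N-\overline{\partial}_N)K=Kd_N$ yields at once $(\partial_N-\overline{\partial}_N)P(-A^{E,2})=0$ and $P(-A^{E,2})-K\big[\widehat P(-d^{E,2}_N)\big]\in\im(\partial_N-\overline{\partial}_N)$. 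Here the key bookkeeping point is that $\widehat P(-d^{E,2}_N)=P(d^{E,2}_N)$ is a sum of forms of fiber type $(k,k,0)$, on which $K$ acts by $(-1)^k$; since replacing $d^{E,2}_N$ by $-d^{E,2}_N$ multiplies the degree $k$ part by the same $(-1)^k$, one obtains $K\big[\widehat P(-d^{E,2}_N)\big]=P(-d^{E,2}_N)$, which establishes \eqref{dilab-eq1-prop-annulation-curvature} and \eqref{dilab-eq2-prop-annulation-curvature}.

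Finally, \eqref{dilab-eq3-prop-annulation-curvature} follows by applying $q_*$ to \eqref{dilab-eq2-prop-annulation-curvature}: since $q_*$ is integration over the closed fiber $N$, Stokes gives $q_*\partial_N=q_*\overline{\partial}_N=0$ on the components reaching fiber top degree, hence $q_*(\partial_N-\overline{\partial}_N)=0$ and $q_*[P(-A^{E,2})]=q_*[P(-d^{E,2}_N)]$. The latter has no $M$-form degree, so it is a function whose value at $x$ is the fiberwise Chern number $\int_N P(-d^{E,2}_N)$; by ordinary Chern--Weil theory on the fiber this equals the topological integral $\int_N P(E_0)$, independent of $x$, so it is constant. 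I expect the main obstacle to be the second step, namely recognizing that conjugation by $K$ simultaneously converts the non-connection $\partial^E_N-\overline{\partial}^E_N$ into the Chern connection $d^E_N$ and intertwines $d_N$ with $\partial_N-\overline{\partial}_N$; the sign-matching $K\,P(d^{E,2}_N)=P(-d^{E,2}_N)$ and the smoothness over $M$ of the fiberwise transgression are the points that must be checked with care.
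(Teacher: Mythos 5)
Your proposal is correct and follows essentially the same route as the paper: the paper's proof introduces exactly your operator $K$ (called $U=(-1)^{N^{\Lambda^\cdot(\overline{T^*N})}}$ there), uses $U^{-1}B^EU=d_N^E+\tfrac{1}{2}\omega^E$ together with $A^{E,2}=-B^{E,2}$ to reduce both claims to Proposition \ref{dilab-prop-conseq-chern-weil} applied to $d_N^E+\tfrac{1}{2}\omega^E$, and handles the sign bookkeeping via $(-1)^{\deg P}$ for homogeneous $P$ where you use $\widehat{P}(y)=P(-y)$ and the $(k,k)$-type of $P(d_N^{E,2})$. The only differences are cosmetic.
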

\begin{proof}
Let $N^{\Lambda^\cdot(\overline{T^*N})}$ be the number operator of $\Lambda^\cdot(\overline{T^*N})$.
Set $U=(-1)^{N^{\Lambda^\cdot(\overline{T^*N})}}$. 

To establish 
\eqref{dilab-eq1-prop-annulation-curvature} and 
\eqref{dilab-eq2-prop-annulation-curvature}, 
we only need 
to show that
\begin{equation}
\label{dilab-eq01-prop-annulation-curvature}
d_N UP\big(-A^{E,2}\big) = 0 \;,
\end{equation} 
and
\begin{equation}
\label{dilab-eq02-prop-annulation-curvature}
UP\big(-A^{E,2}\big) - UP\big(-d_N^{E,2}\big) \in \im\big(d_N\big) \;.
\end{equation}

By \eqref{dilab-eq-explicite-B}, we have
\begin{equation}
U^{-1}B^EU  = d_N^E + \frac{1}{2}\omega^E \;.
\end{equation}
Now, applying \eqref{dilab-eq-relation-A-B}, we get
\begin{equation}
\label{dilab-eq-prop-annulation-curvature-1}
U^{-1}A^{E,2}U = -U^{-1}B^{E,2}U = -\big(d_N^E+\frac{1}{2}\omega^E\big)^2 \;.
\end{equation}
We may and we will assume that $P$ is  homogeneous. 
By \eqref{dilab-eq-prop-annulation-curvature-1}, we have
\begin{equation}
\label{dilab-eq-prop-annulation-curvature-2}
U P\big(-A^{E,2}\big) = (-1)^{\deg P} P\Big(-\big(d_N^E+\frac{1}{2}\omega^E\big)^2\Big) \;.
\end{equation}
Applying Proposition \ref{dilab-prop-conseq-chern-weil} to 
the right-hand side of \eqref{dilab-eq-prop-annulation-curvature-2}, 
we get \eqref{dilab-eq01-prop-annulation-curvature}. 

We decompose \eqref{dilab-eq-prop-annulation-curvature-2}
according to \eqref{dilab-eq3-triple-splitting-forms}. 
By extracting the components which are of positive degree along $M$, 
we get
\begin{align}
\label{dilab-eq-prop-annulation-curvature-3}
\begin{split}
& U P\big(-A^{E,2}\big) - U P\big(-d^{E,2}_N\big) \\
& = (-1)^{\deg P} P\Big(-\big(d_N^E+\frac{1}{2}\omega^E\big)^2\Big) 
- (-1)^{\deg P} P\Big(-d_N^{E,2}\Big) \;.
\end{split}
\end{align}
Applying Proposition \ref{dilab-prop-conseq-chern-weil} to 
the right-hand side of \eqref{dilab-eq-prop-annulation-curvature-3}, 
we get \eqref{dilab-eq02-prop-annulation-curvature}. 

Taking the integral of \eqref{dilab-eq2-prop-annulation-curvature} along $N$,
we get \eqref{dilab-eq3-prop-annulation-curvature}.
\end{proof}

For $t\in\mathbb{R}$, set
\begin{equation}
A^E_t= d_N^E + td_M^E + (1-t)d_M^{E,*} 
\index{AEt@$A^E_t$}\;.
\end{equation}
In particular,
\begin{equation}
A^E_{1/2}=A^E \;.
\end{equation}

Set
\begin{equation}
\label{dilab-eq-def-vt} 
V_t = (2-2t)^{N^{\Lambda^\cdot(T^*N)}}(2t)^{N^{\Lambda^\cdot(\overline{T^*N})}} 
\index{Vt@$V_t$}\;.
\end{equation}

\begin{lemme}
For $t \neq 0,1$, we have
\begin{equation}
\label{dilab-eq-connection-t-rescaling}
A^{E,2}_t = 4t(1-t)V_t^{-1}A^{E,2}V_t \;.
\end{equation}
\end{lemme}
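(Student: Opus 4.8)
The plan is to expand both sides of \eqref{dilab-eq-connection-t-rescaling} as explicit polynomials in the four odd operators $\partial^E_N$, $\overline{\partial}^E_N$, $d^E_M$, $d^{E,*}_M$, and then to compare them term by term while keeping track of the bidegree with respect to the two fiberwise number operators $N^{\Lambda^\cdot(T^*N)}$ and $N^{\Lambda^\cdot(\overline{T^*N})}$.

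First I would record the two linear combinations to be squared. By \eqref{eq:corr2} and \eqref{dilab-eq-explicite-A} we have $A^E = \partial^E_N + \overline{\partial}^E_N + \frac{1}{2}\big(d^E_M + d^{E,*}_M\big)$, while by definition $A^E_t = \partial^E_N + \overline{\partial}^E_N + t\,d^E_M + (1-t)d^{E,*}_M$. Since all four operators are odd, squaring a linear combination produces the sum of the squares together with the pairwise supercommutators, each weighted by the product of its two coefficients. The squares $\partial^{E,2}_N$, $\overline{\partial}^{E,2}_N$, $d^{E,2}_M$, $d^{E,*,2}_M$ all vanish by \eqref{dilab-eq-commute-n-E-NM} and \eqref{dilab-eq-commute-n-E-NM-star}. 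Of the six cross-terms, $\big[\overline{\partial}^E_N,d^E_M\big]$ and $\big[\overline{\partial}^E_N,d^{E,*}_M\big]$ vanish by the same equations, and $\big[\partial^E_N,d^{E,*}_M\big]$ vanishes because $\big({A^E}'\big)^2 = 0$ (see \eqref{dilab-eq-AEprimsquare=0}) together with $\partial^{E,2}_N = d^{E,*,2}_M = 0$. Only $\big[\partial^E_N,\overline{\partial}^E_N\big]$, $\big[\partial^E_N,d^E_M\big]$ and $\big[d^E_M,d^{E,*}_M\big]$ survive, and I obtain
\begin{equation}
A^{E,2} = \big[\partial^E_N,\overline{\partial}^E_N\big] + \tfrac{1}{2}\big[\partial^E_N,d^E_M\big] + \tfrac{1}{4}\big[d^E_M,d^{E,*}_M\big] \;,
\end{equation}
\begin{equation}
A^{E,2}_t = \big[\partial^E_N,\overline{\partial}^E_N\big] + t\big[\partial^E_N,d^E_M\big] + t(1-t)\big[d^E_M,d^{E,*}_M\big] \;.
\end{equation}

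Next I would read off the bidegree of each surviving term for the grading $\big(N^{\Lambda^\cdot(T^*N)},N^{\Lambda^\cdot(\overline{T^*N})}\big)$: since $\partial^E_N$ has bidegree $(1,0)$, $\overline{\partial}^E_N$ has $(0,1)$, and $d^E_M$, $d^{E,*}_M$ leave both fiber degrees unchanged, the term $\big[\partial^E_N,\overline{\partial}^E_N\big]$ is of bidegree $(1,1)$, the term $\big[\partial^E_N,d^E_M\big]$ of bidegree $(1,0)$, and $\big[d^E_M,d^{E,*}_M\big]$ of bidegree $(0,0)$. Conjugation by $V_t = (2-2t)^{N^{\Lambda^\cdot(T^*N)}}(2t)^{N^{\Lambda^\cdot(\overline{T^*N})}}$ multiplies an operator of bidegree $(p,q)$ by $(2-2t)^p(2t)^q$, so $V_t^{-1}(\cdot)V_t$ multiplies it by $(2-2t)^{-p}(2t)^{-q}$. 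Applying this to the formula for $A^{E,2}$ and multiplying by $4t(1-t) = (2-2t)(2t)$, the three coefficients become $1$, $\tfrac{1}{2}\cdot 2t = t$ and $\tfrac{1}{4}(2-2t)(2t) = t(1-t)$ respectively, which reproduces exactly the formula for $A^{E,2}_t$. This yields \eqref{dilab-eq-connection-t-rescaling}, the hypothesis $t \neq 0,1$ being used only to make $V_t$ invertible.

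The computation is otherwise routine; the two points demanding care are the bookkeeping of the vanishing supercommutators — in particular deducing $\big[\partial^E_N,d^{E,*}_M\big]=0$ from $\big({A^E}'\big)^2=0$ rather than from one of the explicitly listed relations — and the correct assignment of the bidegree $(1,1)$ to $\big[\partial^E_N,\overline{\partial}^E_N\big]$. The latter does not vanish (indeed $\big[\partial^E_N,\overline{\partial}^E_N\big] = d^{E,2}_N$ is the nonzero fiberwise Chern curvature), but it is precisely its bidegree, not its vanishing, that makes its coefficient invariant under the rescaling by $4t(1-t)V_t^{-1}(\cdot)V_t$.
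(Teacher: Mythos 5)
Your proof is correct and rests on exactly the same mechanism as the paper's: conjugation by $V_t$ rescales an operator according to its fiber bidegree, while the relations \eqref{dilab-eq-commute-n-E-NM}, \eqref{dilab-eq-commute-n-E-NM-star} and \eqref{dilab-eq-AEprimsquare=0} kill all but the three cross-terms you retain. The paper packages the same computation more compactly, conjugating ${A^E}'$ and ${A^E}''$ separately to get $\partial^E_N+(1-t)d_M^{E,*}$ and $\overline{\partial}^E_N+td_M^E$ and then using $A^{E,2}=\big[{A^E}',{A^E}''\big]$, but this is a difference of bookkeeping rather than of substance.
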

\begin{proof}
By \eqref{dilab-eq-def-flat-superconnection} and \eqref{dilab-eq-def-flat-superconnection-star}, 
we have
\begin{align}
\label{dilab-prop-rescal-A-t-proof-eq-1}
\begin{split}
2tV_t^{-1}2^{-N^{\Lambda^\cdot(T^*M)}}{A^E}''2^{N^{\Lambda^\cdot(T^*M)}}V_t & = \overline{\partial}^E_N + td_M^E \;,\\
(2-2t)V_t^{-1}2^{-N^{\Lambda^\cdot(T^*M)}}{A^E}'2^{N^{\Lambda^\cdot(T^*M)}}V_t & = \partial^E_N + (1-t)d_M^{E,*} \;.
\end{split}
\end{align}
By \eqref{dilab-eq-commute-n-E-NM}, \eqref{dilab-eq-commute-n-E-NM-star}, \eqref{dilab-eq-relation-A-B}
and \eqref{dilab-prop-rescal-A-t-proof-eq-1}, we have
\begin{align}
\begin{split}
& 4t(1-t)V_t^{-1}A^{E,2}V_t \\
= \; & \Big[(2-2t)V_t^{-1}2^{-N^{\Lambda^\cdot(T^*M)}}{A^E}'2^{N^{\Lambda^\cdot(T^*M)}}V_t\,,\,2tV_t^{-1}2^{-N^{\Lambda^\cdot(T^*M)}}{A^E}''2^{N^{\Lambda^\cdot(T^*M)}}V_t\Big] \\
= \; & \Big[\partial^E_N + (1-t)d_M^{E,*}\,,\,\overline{\partial}_N^E + td_M^E\Big] \\
= \; & \Big( \partial^E_N + (1-t)d_M^{E,*} + \overline{\partial}_N^E + td_M^E \Big)^2 
= A^{E,2}_t \;.
\end{split}
\end{align}
\end{proof}

Now we extend Proposition \ref{dilab-prop-annulation-curvature} 
by considering the  extra parameter $t$.
\begin{thm}
\label{dilab-thm-gen-flat2trivialclass}
For any invariant polynomial $P$ on $\mathfrak{gl}(r,\mathbb{C})$ and $t\in\mathbb{R}$, 
we have
\begin{equation}
\label{dilab-eq-thm-gen-flat2trivialclass}
q_*\big[P\big(-A^{E,2}_t\big)\big] = 
q_*\big[P\big(-d_N^{E,2}\big)\big] \;,
\end{equation}
which is a constant function on $M$.
\end{thm}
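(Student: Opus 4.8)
The plan is to reduce Theorem~\ref{dilab-thm-gen-flat2trivialclass} to the case $t=1/2$ already treated in Proposition~\ref{dilab-prop-annulation-curvature}, by exploiting the rescaling identity \eqref{dilab-eq-connection-t-rescaling}. First I would dispose of the degenerate values $t=0,1$: at these points $A^E_t$ is not built from the balanced rescaling, but the statement \eqref{dilab-eq-thm-gen-flat2trivialclass} for generic $t$ together with a continuity argument (the left-hand side being polynomial, hence continuous, in $t$) will extend the conclusion to all of $\mathbb{R}$. So the heart of the matter is $t\neq 0,1$.

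For $t\neq 0,1$, the key observation is \eqref{dilab-eq-connection-t-rescaling}, which says $A^{E,2}_t = 4t(1-t)\,V_t^{-1}A^{E,2}V_t$. Since $P$ is invariant under conjugation, I would write, for homogeneous $P$ of degree $d$,
\begin{equation}
P\big(-A^{E,2}_t\big) = \big(4t(1-t)\big)^{d}\, V_t^{-1}\,P\big(-A^{E,2}\big)\,V_t \;.
\end{equation}
The scalar factor $\big(4t(1-t)\big)^{d}$ pulls out harmlessly, but the conjugation by $V_t$ is more delicate because $V_t = (2-2t)^{N^{\Lambda^\cdot(T^*N)}}(2t)^{N^{\Lambda^\cdot(\overline{T^*N})}}$ acts as a scalar on each bidegree component $\Omega^{(p,q,r)}(\mathcal{N})$, via \eqref{dilab-eq3-triple-splitting-forms}. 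Conjugation by a bidegree-multiplication operator rescales a form-component according to the \emph{difference} of the bidegrees it links, and it therefore alters the coefficients of the individual components of $P\big(-A^{E,2}\big)$.

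The main obstacle, then, is controlling what survives after applying $q_*$, which is the fiber integration over the complex fiber $N$ of real dimension $2n$. Here I would use the fact that $q_*$ extracts only the component of top vertical degree $(n,n)$ along $N$; on this component the conjugation by $V_t$ contributes the scalar $(2-2t)^{n}(2t)^{n}$ from the exponents $N^{\Lambda^\cdot(T^*N)}=n$ and $N^{\Lambda^\cdot(\overline{T^*N})}=n$, while the two-sided conjugation cancels on any purely vertical piece that is already a scalar in the horizontal $M$-directions. Combining this with the degree count, the scalar prefactors should organize into $\big(4t(1-t)\big)^{d}(2-2t)^{-n}(2t)^{-n}\cdot(2-2t)^{n}(2t)^{n}$, and I expect these to telescope precisely so that $q_*\big[P\big(-A^{E,2}_t\big)\big] = q_*\big[P\big(-A^{E,2}\big)\big]$. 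The latter equals $q_*\big[P\big(-d_N^{E,2}\big)\big]$, a constant function on $M$, by \eqref{dilab-eq3-prop-annulation-curvature}. The delicate bookkeeping is verifying that the scalar factors introduced by $V_t$ really do cancel after fiber integration rather than merely on the full form; I would carry this out by decomposing according to \eqref{dilab-eq3-triple-splitting-forms} and checking the surviving $(n,n,\cdot)$-component explicitly.
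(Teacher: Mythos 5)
Your overall strategy --- reduce to $t\neq 0,1$ by polynomiality of $q_*\big[P(-A^{E,2}_t)\big]$ in $t$, then exploit the rescaling identity \eqref{dilab-eq-connection-t-rescaling} --- is exactly the paper's, but the scalar bookkeeping at the heart of your argument is wrong: the claimed telescoping does not happen. Conjugation of a multiplication operator does not produce two compensating factors. If $\alpha$ is a scalar-valued form, then $V_t^{-1}(\alpha\wedge\cdot)V_t$ is multiplication by the form $V_t^{-1}\alpha$, whose vertical $(p,q)$-component is $(2-2t)^{-p}(2t)^{-q}$ times that of $\alpha$; the right-hand $V_t$ is absorbed in this computation and contributes no extra $(2-2t)^{n}(2t)^{n}$. (Your assertion that the ``two-sided conjugation cancels'' on purely vertical pieces is false except on the vertical bidegree $(0,0)$ part.) Extracting the $(n,n,\cdot)$-component, which is the only one surviving $q_*$, therefore gives
\begin{equation*}
q_*\big[P\big(-A^{E,2}_t\big)\big]
=\big(4t(1-t)\big)^{\deg P}\big(4t(1-t)\big)^{-n}\,q_*\big[P\big(-A^{E,2}\big)\big]
=\big(4t(1-t)\big)^{\deg P-n}\,q_*\big[P\big(-A^{E,2}\big)\big]\;,
\end{equation*}
which is \emph{not} $q_*\big[P(-A^{E,2})\big]$ unless $\deg P=n$. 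Even your own displayed product $\big(4t(1-t)\big)^{d}(2-2t)^{-n}(2t)^{-n}(2-2t)^{n}(2t)^{n}$ equals $\big(4t(1-t)\big)^{d}$, not $1$.

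The gap is closed by one further input, and there are two ways to supply it. The paper's way: before touching the $V_t$-factors, use \eqref{dilab-eq2-prop-annulation-curvature} to replace $P(-A^{E,2})$ by $P(-d_N^{E,2})$ modulo $\im(\partial_N-\overline{\partial}_N)$. Since $V_t^{-1}$ intertwines $\partial_N$ and $\overline{\partial}_N$ with scalar multiples of themselves, the error term remains fiberwise exact and dies under $q_*$; and since $P(-d_N^{E,2})$ is of pure vertical bidegree $(\deg P,\deg P)$, the operator $V_t^{-1}$ multiplies it by exactly $(4t(1-t))^{-\deg P}$, so the prefactors cancel identically for every $\deg P$, giving \eqref{dilab-eq-thm-gen-flat2trivialclass}. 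Alternatively, your route can be salvaged by adding the observation that for $\deg P\neq n$ the form $P(-d_N^{E,2})$ has vertical bidegree $(\deg P,\deg P)\neq(n,n)$, so $q_*\big[P(-A^{E,2})\big]=q_*\big[P(-d_N^{E,2})\big]=0$ and the spurious factor $(4t(1-t))^{\deg P-n}$ multiplies zero. As written, your proof establishes the theorem only in the case $\deg P=n$.
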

\begin{proof}
Since $q_*\big[P\big(-A^{E,2}_t\big)\big]$ is polynomial on $t$, 
it is sufficient to consider the case $t\neq 0,1$. 

We may suppose that $P$ is homogeneous. 
By \eqref{dilab-eq-connection-t-rescaling}, we have
\begin{equation}
\label{dilab-eq1-pf-thm-gen-flat2trivialclass}
q_*\big[P\big(-A^{E,2}_t\big)\big] = \big(4t(1-t)\big)^{\deg P} q_*\big[V_t^{-1}P\big(-A^{E,2}\big)\big] \;.
\end{equation}
Applying Proposition \ref{dilab-prop-annulation-curvature} to the right-hand side of \eqref{dilab-eq1-pf-thm-gen-flat2trivialclass}, 
we get
\begin{equation}
\label{dilab-eq2-pf-thm-gen-flat2trivialclass}
q_*\big[P\big(-A^{E,2}_t\big)\big] = \big(4t(1-t)\big)^{\deg P} q_*\big[V_t^{-1}P\big(-d_N^{E,2}\big)\big] \;.
\end{equation}
Since $P\big(-d_N^{E,2}\big)$ is a $(\deg P, \deg P)$-form on $N$, 
we have
\begin{equation}
\label{dilab-eq3-pf-thm-gen-flat2trivialclass}
V_t^{-1}P\big(-d_N^{E,2}\big) = \big(4t(1-t)\big)^{-\deg P} P\big(-d_N^{E,2}\big) \;.
\end{equation}
By \eqref{dilab-eq2-pf-thm-gen-flat2trivialclass} and \eqref{dilab-eq3-pf-thm-gen-flat2trivialclass}, 
we get \eqref{dilab-eq-thm-gen-flat2trivialclass}. 
\end{proof}

\subsection{The odd characteristic forms}
\label{dilab-subsection-chern-simons-char}

\

Set
$\varphi = (2\pi i)^{-\frac{1}{2}N^{\Lambda^\cdot (T^*\mathcal{N})}} \index{p hi var@$\varphi$}$.

Let $P$ be an invariant polynomial on $\mathfrak{gl}(r,\mathbb{C})$.

\begin{defn}
For $t\in\mathbb{R}$, set
\begin{equation}
\widetilde{P}_t\big(E, g^E\big) = \sqrt{2\pi i} \varphi \; \left\langle P'\big(-A^{E,2}_t\big),\frac{\omega^E}{2} \right\rangle \in\Omega^\mathrm{odd}(\mathcal{N})
\index{Pinvt tilde@$\widetilde{P}_t(\cdot,\cdot)$}\;.
\end{equation}
\end{defn}

Here the notation $\big\langle P'(\cdot),\cdot\big\rangle$ 
was defined in \eqref{dilab-eq-intro-def-Pder}.

\begin{prop}
\label{dilab-prop-class-odd}
For $t\in\mathbb{R}$, 
the differential form
\begin{equation}
q_*\big[\widetilde{P}_t\big(E, g^E\big)\big]\in\Omega^\mathrm{odd}(M)
\end{equation} 
is closed .
Its cohomology class 
\begin{equation}
\left[ q_*\big[\widetilde{P}_t\big(E, g^E\big)\big] \right] \in H^\cdot(M)
\end{equation}
is independent of $g^E$.
\end{prop}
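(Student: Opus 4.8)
The plan is to establish both assertions by reducing everything to the parametrized Chern–Weil statement of Proposition~\ref{dilab-prop-conseq-chern-weil}, exactly as was done in the proof of Proposition~\ref{dilab-prop-annulation-curvature}, and then differentiating in $t$ to get the variation formula needed for $g^E$-independence. First I would observe that $\widetilde{P}_t(E,g^E)$ is, up to the rescaling $\varphi$ and the factor $\sqrt{2\pi i}$, the derivative of $P(-A^{E,2}_t)$ with respect to the parameter $t$. Indeed, differentiating $A^E_t = d_N^E + t\,d_M^E + (1-t)\,d_M^{E,*}$ gives $\frac{\partial}{\partial t}A^E_t = d_M^E - d_M^{E,*} = -\omega^E$ by \eqref{dilab-eq-def-omegaE}, so that
\begin{equation}
\frac{\partial}{\partial t} P\big(-A^{E,2}_t\big) = -\Big\langle P'\big(-A^{E,2}_t\big), \big[A^E_t,\tfrac{\partial}{\partial t}A^E_t\big]\Big\rangle = \Big\langle P'\big(-A^{E,2}_t\big), \big[A^E_t,\omega^E\big]\Big\rangle .
\end{equation}
This identifies the transgression form $\widetilde{P}_t$ as the piece of a Chern–Simons transgression between the flat-type superconnections at different values of $t$, and it is what links closedness of $q_*[\widetilde{P}_t]$ to the constancy of $q_*[P(-A^{E,2}_t)]$ already proved in Theorem~\ref{dilab-thm-gen-flat2trivialclass}.

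The core computation is to verify the transgression identity
\begin{equation}
d_{\mathcal{N}}\,\widetilde{P}_t\big(E,g^E\big) = \frac{\partial}{\partial t}\,\sqrt{2\pi i}\,\varphi\,P\big(-A^{E,2}_t\big),
\end{equation}
which is the standard Chern–Weil transgression applied to the family $A^E_t$, using that $A^{E,2}_t$ is $d_{\mathcal{N}}$-covariantly constant (the Bianchi identity $[A^E_t, A^{E,2}_t]=0$) together with the fact that $d_{\mathcal{N}}=\partial_N+\overline\partial_N+d_M$ acts as the appropriate covariant derivative on the invariant-polynomial expression. I would then apply $q_*$, which commutes with $d_M$ and kills the purely vertical de Rham differential after integration over the closed fiber $N$; combined with Theorem~\ref{dilab-thm-gen-flat2trivialclass}, which says $q_*[P(-A^{E,2}_t)]$ is a constant function of the base point and, being independent of $t$, has vanishing $t$-derivative, this forces
\begin{equation}
d_M\, q_*\big[\widetilde{P}_t\big(E,g^E\big)\big] = \frac{\partial}{\partial t}\,\sqrt{2\pi i}\,\varphi\,q_*\big[P\big(-A^{E,2}_t\big)\big] = 0,
\end{equation}
establishing closedness.

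For the independence of the cohomology class on $g^E$, I would deform the metric: let $g^E_s$ be a smooth family of Hermitian metrics and set $h = (g^E)^{-1}\frac{\partial}{\partial s}g^E \in \smooth(\mathcal{N},\End(E))$. Since $\omega^E$, $\partial^E_N$, and $d_M^{E,*}$ are all defined through $g^E$ via \eqref{dilab-eq-def-EN-EMu} and \eqref{dilab-eq-def-omegaE}, their $s$-derivatives are expressible in terms of $h$, and one gets that $\frac{\partial}{\partial s}\,\widetilde{P}_t(E,g^E_s)$ differs from $d_{\mathcal{N}}$ of an explicit form by terms that integrate to zero over $N$; applying $q_*$ and using again that $q_*$ intertwines $d_{\mathcal{N}}$ with $d_M$ modulo the fiber integration, one exhibits $\frac{\partial}{\partial s}\,q_*[\widetilde{P}_t(E,g^E_s)]$ as $d_M$ of a globally defined form on $M$, so the class is constant in $s$. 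The main obstacle I anticipate is purely bookkeeping: correctly tracking the conjugation by $2^{-N^{\Lambda^\cdot(T^*M)}}$ built into $A^E$ and the number-operator rescaling $\varphi$ through the transgression and the fiber integration, and checking that the would-be boundary terms on the noncompact directions do not appear because $N$ is closed. Once the transgression identity is written covariantly, however, the argument is a direct parallel to the Bismut–Lott proof of \cite[Theorem~1.8]{bl} adapted to the flat fibration via Proposition~\ref{dilab-prop-conseq-chern-weil}.
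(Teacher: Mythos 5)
Your argument is essentially identical to the paper's proof: you derive the same transgression identity relating $d_{\mathcal{N}}\widetilde{P}_t(E,g^E)$ to $\frac{\partial}{\partial t}P\big(-A^{E,2}_t\big)$ via $\frac{\partial}{\partial t}A^E_t=-\omega^E$ and the Bianchi identity, invoke Theorem \ref{dilab-thm-gen-flat2trivialclass} to kill the $t$-derivative after fiber integration, and handle the $g^E$-dependence by the standard deformation/functoriality argument, exactly as the paper does (which simply cites functoriality). The only blemish is a harmless factor of $2$ in your transgression identity (the correct form is $\sqrt{2\pi i}\,\varphi\,\frac{\partial}{\partial t}P\big(-A^{E,2}_t\big)=2\,d_{\mathcal{N}}\widetilde{P}_t\big(E,g^E\big)$, owing to the $\omega^E/2$ in the definition of $\widetilde{P}_t$), which does not affect the conclusion.
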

\begin{proof} 
We have (cf. \cite[\textsection 1.4]{bgv})
\begin{align}
\begin{split}
\sqrt{2\pi i} \varphi \frac{\partial}{\partial t} P\big(-A^{E,2}_t\big) 
= \; & -\sqrt{2\pi i} \varphi \left\langle P'\big(-A^{E,2}_t\big),\big[A^E_t,\frac{\partial}{\partial t}A^E_t\big]\right\rangle  \\
= \; & -\sqrt{2\pi i} \varphi \, d_\mathcal{N} \left\langle P'\big(-A^{E,2}_t\big),\frac{\partial}{\partial t}A^E_t\right\rangle \\ 
= \; & -d_\mathcal{N} \varphi \left\langle P'\big(-A^{E,2}_t\big),\frac{\partial}{\partial t}A^E_t\right\rangle \;.
\end{split}
\end{align}
Since
\begin{equation}
\frac{\partial}{\partial t}A^E_t = d_M^E - d_M^{E,*} = - \omega^E \;,
\end{equation}
we have
\begin{equation}
\label{dilab-eq-closeness-chern-simons}
\sqrt{2\pi i} \varphi \;\frac{\partial}{\partial t} P\big(-A^{E,2}_t\big) = 
2 d_\mathcal{N}\widetilde{P}_t\big(E, g^E\big) \;.
\end{equation}

By Proposition \ref{dilab-thm-gen-flat2trivialclass}, we get
\begin{equation}
\label{dilab-proof-prop-class-odd-eq-1}
\frac{\partial}{\partial t} q_*\big[P\big(-A^{E,2}_t\big)\big] = 0 \;.
\end{equation}
By (\ref{dilab-eq-closeness-chern-simons}) and (\ref{dilab-proof-prop-class-odd-eq-1}), we get
\begin{equation}
d_M q_*\big[\widetilde{P}_t\big(E, g^E\big)\big] = 
q_*\big[d_\mathcal{N}\widetilde{P}_t\big(E, g^E\big)\big] = 0 \;.
\end{equation}
Thus $q_*\big[\widetilde{P}_t\big(E, g^E\big)\big]$ is closed.

The fact that 
$\left[q_*\big[\widetilde{P}_t\big(E, g^E\big)\big]\right]\in 
H^\cdot(M)$ is independent of $g^E$ 
comes from the functoriality of our construction (cf. \cite[\textsection 1.5]{bgv}). 
\end{proof}

Now we study the dependence of $\widetilde{P}_t\big(E, g^E\big)$ on $t$.

Recall that $V_t$ was defined in (\ref{dilab-eq-def-vt}).

\begin{prop}
If $P$ is homogeneous, for $t\in\mathbb{R}$, we have
\begin{equation}
\label{dilab-eq-chern-simons-form-t-rescaling}
\widetilde{P}_t\big(E, g^E\big) = \big(4t(1-t)\big)^{\deg P - 1}V_t^{-1}
\widetilde{P}_\frac{1}{2}\big(E, g^E\big) \;.
\end{equation}
In particular,
\begin{equation}
\label{dilab-eq-chern-simons-class-t-rescaling}
q_*\big[\widetilde{P}_t\big(E, g^E\big)\big] = \big(4t(1-t)\big)^{\deg P - n - 1}
q_*\big[\widetilde{P}_\frac{1}{2}\big(E, g^E\big)\big] \;.
\end{equation}
\end{prop}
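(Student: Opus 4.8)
The plan is to derive both identities from the rescaling relation \eqref{dilab-eq-connection-t-rescaling}, namely $A^{E,2}_t = 4t(1-t)V_t^{-1}A^{E,2}V_t$, combined with the homogeneity of $P$. First I would record that if $P$ is homogeneous of degree $\deg P$, then $\langle P'(\cdot),\cdot\rangle$ is homogeneous of degree $\deg P - 1$ in its first argument, so that $\langle P'(\lambda X),b\rangle = \lambda^{\deg P - 1}\langle P'(X),b\rangle$ for every scalar $\lambda \neq 0$. Applying this with the real scalar $\lambda = 4t(1-t)$ and $X = V_t^{-1}(-A^{E,2})V_t$ immediately extracts the prefactor $(4t(1-t))^{\deg P - 1}$, reducing \eqref{dilab-eq-chern-simons-form-t-rescaling} to the single identity $\langle P'(V_t^{-1}(-A^{E,2})V_t),\omega^E/2\rangle = V_t^{-1}\langle P'(-A^{E,2}),\omega^E/2\rangle$.

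The central step is this last identity. I would use three facts about $V_t$, defined in \eqref{dilab-eq-def-vt} through the number operators $N^{\Lambda^\cdot(T^*N)}$ and $N^{\Lambda^\cdot(\overline{T^*N})}$: it is an algebra automorphism of $\Omega^\cdot(\mathcal{N})$; it commutes with the $\mathrm{End}(E)$-factor and hence with the fibrewise trace; and it commutes with $\varphi$, both being functions of number operators. Because conjugating a multiplication operator by $V_t$ coincides with rescaling the underlying form componentwise on the decomposition \eqref{dilab-eq3-triple-splitting-forms}, the operator $V_t^{-1}(-A^{E,2})V_t$ is multiplication by the rescaled form $V_t^{-1}(-A^{E,2})$. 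Factoring the automorphism $V_t^{-1}$ out through the polynomial $P'$ and through the trace, and using that by \eqref{dilab-eq-def-omegaE} the form $\omega^E$ is purely $M$-horizontal (of bidegree $(0,0,1)$, so that $V_t^{-1}\omega^E = \omega^E$), then produces exactly $V_t^{-1}\langle P'(-A^{E,2}),\omega^E/2\rangle$. Since $A^E_{1/2} = A^E$ and $\varphi$ commutes with $V_t^{-1}$, combining this with the prefactor from the first step yields \eqref{dilab-eq-chern-simons-form-t-rescaling}.

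For the second identity I would apply $q_*$, the integration along the fibre $N$. Since $N$ is a complex manifold of dimension $n$, integration along $N$ kills every component in \eqref{dilab-eq3-triple-splitting-forms} except the one of type $(n,n,\cdot)$, on which $V_t^{-1}$ acts as the scalar $(2-2t)^{-n}(2t)^{-n} = (4t(1-t))^{-n}$. Hence $q_*[V_t^{-1}\widetilde{P}_{1/2}(E,g^E)] = (4t(1-t))^{-n}q_*[\widetilde{P}_{1/2}(E,g^E)]$, and multiplying by the prefactor $(4t(1-t))^{\deg P - 1}$ gives the exponent $\deg P - n - 1$, which is \eqref{dilab-eq-chern-simons-class-t-rescaling}.

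The main obstacle is the bookkeeping in the second paragraph: one must verify carefully that conjugation by $V_t$ at the level of operators coincides with componentwise rescaling at the level of forms, and that $V_t^{-1}$ genuinely passes through $P'$, the $\mathrm{End}(E)$-trace, and $\varphi$. Once the automorphism property of $V_t$ and the horizontality of $\omega^E$ are in place, everything else is a direct substitution.
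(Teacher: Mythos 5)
Your proposal is correct and follows essentially the same route as the paper: both rest on the conjugation identity \eqref{dilab-eq-connection-t-rescaling}, pull out the scalar $\big(4t(1-t)\big)^{\deg P-1}$ by homogeneity of $P'$, move $V_t^{-1}$ through $P'$ and the trace (using that $\omega^E$ is horizontal), and then observe that $q_*$ only sees the $(n,n,\cdot)$ component, on which $V_t^{-1}$ acts by $\big(4t(1-t)\big)^{-n}$. The only point the paper makes that you omit is the remark that, since both sides are rational in $t$, it suffices to treat $t\neq 0,1$ where $V_t$ is invertible — a minor bookkeeping step you should add.
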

\begin{proof}
Since \eqref{dilab-eq-chern-simons-form-t-rescaling} is a rational 
function of $t$, 
it is sufficient to consider the case $t\neq 0,1$.

By \eqref{dilab-eq-connection-t-rescaling}, we have
\begin{align}
\begin{split}
\left\langle P'\big(-A^{E,2}_t\big)\,,\,\frac{\omega^E}{2}\right\rangle
& = \left\langle P'\big(-4t(1-t)V_t^{-1}A^{E,2}_{\frac{1}{2}}V_t\big)\,,\,\frac{\omega^E}{2}\right\rangle \\
& = \big(4t(1-t)\big)^{\deg P'}V_t^{-1}\left\langle P'\big(-A^{E,2}_{\frac{1}{2}}\big)\,,\,\frac{\omega^E}{2}\right\rangle \\
& = \big(4t(1-t)\big)^{\deg P - 1}V_t^{-1}\left\langle P'\big(-A^{E,2}_{\frac{1}{2}}\big)\,,\,\frac{\omega^E}{2}\right\rangle \;,
\end{split}
\end{align}
which is equivalent to \eqref{dilab-eq-chern-simons-form-t-rescaling}.
\end{proof}

In the sequel, we use the convention
\begin{equation}
\widetilde{P}\big(E, g^E\big) = \widetilde{P}_\frac{1}{2}\big(E, g^E\big) 
\index{Pinv tilde@$\widetilde{P}(\cdot,\cdot)$} \;.
\end{equation}

The following proposition is a refinement of Proposition 
\ref{dilab-prop-class-odd}.

\begin{prop}
We have
\begin{align}
\label{dilab-eq-prop-class-odd-close-explicite-1}
\begin{split}
     & d_\mathcal{N} \widetilde{P}\big(E, g^E\big) \\
= \; & \frac{\sqrt{2\pi i}}{2}\varphi\; \big(\frac{\partial}{\partial t}V_t^{-1}\big)_{t=\frac{1}{2}} \big(\partial_N - \overline{\partial}_N\big) 
\int_0^1 \left\langle P'\Big(\big(\partial^E_N-\overline{\partial}^E_N+\frac{s\omega^E}{2}\big)^2\Big),\frac{\omega^E}{2}\right\rangle ds \;.
\end{split}
\end{align}
In particular, for $p = 0,\cdots,n$, we have
\begin{equation}
\label{dilab-eq-prop-class-odd-close-explicite-2}
\Big\{d_\mathcal{N}\widetilde{P}\big(E, g^E\big)\Big\}^{(p,p,\cdot)} = 0 \;.
\end{equation}
\end{prop}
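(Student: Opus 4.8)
The plan is to obtain the explicit formula \eqref{dilab-eq-prop-class-odd-close-explicite-1} by differentiating the curvature form $P(-A^{E,2}_t)$ in $t$ and then specializing at $t=\tfrac12$, exploiting the rescaling identity \eqref{dilab-eq-connection-t-rescaling} to separate the $V_t$-conjugation from the Chern--Simons transgression. First I would recall from the proof of Proposition \ref{dilab-prop-class-odd} the transgression formula \eqref{dilab-eq-closeness-chern-simons}, namely
\begin{equation*}
\sqrt{2\pi i}\,\varphi\,\frac{\partial}{\partial t}P\big(-A^{E,2}_t\big) = 2\,d_{\mathcal{N}}\widetilde{P}_t\big(E,g^E\big)\;.
\end{equation*}
The idea is to compute the left-hand side in a second way, by differentiating the rescaling identity \eqref{dilab-eq-chern-simons-form-t-rescaling} for $\widetilde{P}_t$, or equivalently \eqref{dilab-eq-connection-t-rescaling} for the curvature, and then set $t=\tfrac12$ where $V_{1/2}=\Id$.

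The key computation is as follows. Using \eqref{dilab-eq-connection-t-rescaling}, which reads $A^{E,2}_t = 4t(1-t)V_t^{-1}A^{E,2}V_t$, I would write $P\big(-A^{E,2}_t\big)$ in terms of the conjugated curvature and differentiate the product $4t(1-t)V_t^{-1}(\cdot)V_t$ by the Leibniz rule. At $t=\tfrac12$ the scalar prefactor $4t(1-t)$ equals $1$ and its first derivative vanishes, so the only surviving contribution from differentiating the scalar factor drops out; what remains is the derivative of the conjugation $V_t^{-1}(\cdot)V_t$, which produces the operator $(\partial_N-\overline{\partial}_N)$ up to the factor $(\frac{\partial}{\partial t}V_t^{-1})_{t=1/2}$. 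Here I would use that $V_t$ acts on the splitting \eqref{dilab-eq3-triple-splitting-forms} by $(2-2t)^{N^{\Lambda^\cdot(T^*N)}}(2t)^{N^{\Lambda^\cdot(\overline{T^*N})}}$, so that $\frac{\partial}{\partial t}V_t^{-1}$ at $t=\tfrac12$ is a combination of the two vertical number operators, and commuting this past $A^{E}$ converts into the difference $\partial_N-\overline{\partial}_N$. To identify the integrand, I would use the standard Chern--Weil transgression
\begin{equation*}
P\big(-B^2\big)-P\big(-B_0^2\big) = d_{\mathcal{N}}\int_0^1\Big\langle P'\big(-B_s^2\big),\tfrac{\partial}{\partial s}B_s\Big\rangle\,ds
\end{equation*}
applied to the family $B_s=\partial^E_N-\overline{\partial}^E_N+\frac{s\omega^E}{2}$ interpolating between $d_N^E$ (at $s=0$) and $B^E$ (at $s=1$, cf.\ \eqref{dilab-eq-explicite-B}), whose $s$-derivative is exactly $\frac{\omega^E}{2}$; this is what generates the $\int_0^1\langle P'(\cdots),\frac{\omega^E}{2}\rangle\,ds$ appearing on the right of \eqref{dilab-eq-prop-class-odd-close-explicite-1}.

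For the second assertion \eqref{dilab-eq-prop-class-odd-close-explicite-2}, I would simply read off the bidegree along the fiber $N$ of the right-hand side of \eqref{dilab-eq-prop-class-odd-close-explicite-1}. The operator $(\partial_N-\overline{\partial}_N)$ raises the holomorphic degree or the anti-holomorphic degree along $N$ by one, hence it strictly changes $p-q$; consequently its image has no component of vertical bidegree $(p,p,\cdot)$. Therefore the $(p,p,\cdot)$-component of $d_{\mathcal{N}}\widetilde{P}(E,g^E)$ vanishes for every $p=0,\dots,n$, which is precisely \eqref{dilab-eq-prop-class-odd-close-explicite-2}.

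I expect the main obstacle to be the bookkeeping in the first step: one must carefully commute the vertical number operators arising from $\frac{\partial}{\partial t}V_t^{-1}$ past $A^E$ and its curvature, keeping track of signs and of the $\varphi$-normalization, and justify that the derivative of $P\big(-A^{E,2}_t\big)$ at $t=\tfrac12$ equals $(\frac{\partial}{\partial t}V_t^{-1})_{t=1/2}(\partial_N-\overline{\partial}_N)$ acting on the transgression integral rather than some other rearrangement. Once the conjugation derivative is correctly identified with $(\partial_N-\overline{\partial}_N)$ and matched against the $s$-transgression of the family $B_s$, the rest is the routine bidegree argument above.
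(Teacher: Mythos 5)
Your derivation of \eqref{dilab-eq-prop-class-odd-close-explicite-1} is essentially the paper's own route: start from \eqref{dilab-eq-closeness-chern-simons}, rewrite $P\big(-A^{E,2}_t\big)$ via the rescaling identity \eqref{dilab-eq-connection-t-rescaling}, note that $(4t(1-t))^{\deg P}$ has vanishing derivative at $t=\frac12$ so that only $\big(\frac{\partial}{\partial t}V_t^{-1}\big)_{t=\frac12}$ survives, and identify the form it acts on with a Chern--Simons transgression of the family $\partial^E_N-\overline{\partial}^E_N+\frac{s\omega^E}{2}$. Two details in your account are off, though neither is fatal. First, $\partial_N-\overline{\partial}_N$ does not arise from ``commuting the number operators past $A^E$''; it arises because $P\big(-A^{E,2}\big)-P\big(-d_N^{E,2}\big)$ is, after conjugation by $U=(-1)^{N^{\Lambda^\cdot(\overline{T^*N})}}$ as in \eqref{dilab-eq-prop-annulation-curvature-1}, the $d_N$-transgression of $d_N^E+\frac{s}{2}\omega^E$, and conjugation by $U$ turns $d_N$ into $\partial_N-\overline{\partial}_N$; one must also subtract the constant-in-$t$ term $P\big(-d_N^{E,2}\big)$ before differentiating, which is harmless precisely because $\big(\frac{\partial}{\partial t}V_t^{-1}\big)_{t=\frac12}$ kills $(p,p,\cdot)$-forms. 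Second, your family has $B_0=\partial^E_N-\overline{\partial}^E_N$, not $d_N^E$, so $B_0^2=-d_N^{E,2}$; the transgression differential for this family is $\partial_N-\overline{\partial}_N$, not $d_{\mathcal{N}}$.

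There is a genuine error in your proof of \eqref{dilab-eq-prop-class-odd-close-explicite-2}. You assert that the image of $\partial_N-\overline{\partial}_N$ contains no component of vertical bidegree $(p,p)$ because the operator ``strictly changes $p-q$''. That is a non sequitur: $\partial_N$ maps a $(p-1,p,\cdot)$-form to a $(p,p,\cdot)$-form, and the transgression integral $\int_0^1\big\langle P'(B_s^2),\frac{\omega^E}{2}\big\rangle ds$ genuinely has off-diagonal vertical components (the cross terms $s\big[\partial^E_N-\overline{\partial}^E_N,\frac{\omega^E}{2}\big]$ in $B_s^2$ contribute pieces of bidegree $(1,0,1)$ and $(0,1,1)$), so $\big(\partial_N-\overline{\partial}_N\big)$ applied to it does have nonzero $(p,p,\cdot)$-components in general. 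The correct reason for the vanishing is the prefactor: by \eqref{dilab-eq-def-vt}, $V_t^{-1}$ restricted to $\Omega^{(p,p,\cdot)}$ is multiplication by $(4t(1-t))^{-p}$, whose $t$-derivative vanishes at $t=\frac12$; equivalently $\big(\frac{\partial}{\partial t}V_t^{-1}\big)_{t=\frac12}=2\big(N^{\Lambda^\cdot(T^*N)}-N^{\Lambda^\cdot(\overline{T^*N})}\big)$ annihilates every $(p,p,\cdot)$-form. Since this operator preserves vertical bidegree, the $(p,p,\cdot)$-component of the right-hand side of \eqref{dilab-eq-prop-class-odd-close-explicite-1} vanishes for that reason, independently of any property of $\partial_N-\overline{\partial}_N$.
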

\begin{proof}
By \eqref{dilab-eq-connection-t-rescaling}, we have
\begin{align}
\label{dilab-eq-pf-prop-class-odd-close-explicite-0}
\begin{split}
& \frac{\partial}{\partial t} \Big\{ \sqrt{2\pi i} \varphi\; P\big(-A^{E,2}_t\big) \Big\}_{t=\frac{1}{2}} \\
= \; & \frac{\partial}{\partial t} \Big\{ \sqrt{2\pi i} \varphi\; \big(4t(1-t)\big)^{\deg P}V_t^{-1}P\big(-A^{E,2}\big) \Big\}_{t=\frac{1}{2}} \;.
\end{split}
\end{align}
By \eqref{dilab-eq3-pf-thm-gen-flat2trivialclass} and \eqref{dilab-eq-pf-prop-class-odd-close-explicite-0}, 
we have 
\begin{align}
\label{dilab-eq-pf-prop-class-odd-close-explicite-1}
\begin{split}
& \frac{\partial}{\partial t} \Big\{ \sqrt{2\pi i} \varphi\; P\big(-A^{E,2}_t\big) \Big\}_{t=\frac{1}{2}} \\
= \; & \frac{\partial}{\partial t} \Big\{ \sqrt{2\pi i} \varphi\; \big(4t(1-t)\big)^{\deg P}V_t^{-1} 
\Big(P\big(-A^{E,2}\big) - P\big(-d_N^{E,2}\big)\Big) \Big\}_{t=\frac{1}{2}}
\end{split}
\end{align}

By \eqref{dilab-eq-prop-annulation-curvature-1}, we have
\begin{equation}
\label{dilab-eq-pf-prop-class-odd-close-explicite-2}
P\big(-A^{E,2}\big) - P\big(-d_N^{E,2}\big)
= U \left( P\Big(\big(d_N^E+\frac{\omega^E}{2}\big)^2\Big) - P\big(d_N^{E,2}\big) \right) \;.
\end{equation}
As a consequence of Proposition \ref{dilab-prop-conseq-chern-weil} (cf. \cite[\textsection 1.5]{bgv}). , we get
\begin{equation}
\label{dilab-eq-pf-prop-class-odd-close-explicite-3}
P\Big(\big(d_N^E+\frac{\omega^E}{2}\big)^2\Big) - P\big(d_N^{E,2}\big) 
= d_N \int_0^1 \left\langle P'\Big(\big(d_N^E+\frac{s\omega^E}{2}\big)^2\Big),\frac{\omega^E}{2} \right\rangle ds \;.
\end{equation}
Then
\begin{align}
\label{dilab-eq-pf-prop-class-odd-close-explicite-4}
\begin{split}
     & U \left( P\Big(\big(d_N^E+\frac{\omega^E}{2}\big)^2\Big) - P\big(d_N^{E,2}\big) \right) \\
= \; &  \big(\partial_N - \overline{\partial}_N\big) \int_0^1 
\left\langle P'\Big(\big(\partial^E_N-\overline{\partial}^E_N+\frac{s\omega^E}{2}\big)^2\Big),\frac{\omega^E}{2} \right\rangle ds \;.
\end{split}
\end{align}

By 
\eqref{dilab-eq-closeness-chern-simons}, 
\eqref{dilab-eq-pf-prop-class-odd-close-explicite-1}, 
\eqref{dilab-eq-pf-prop-class-odd-close-explicite-2} and 
\eqref{dilab-eq-pf-prop-class-odd-close-explicite-4}, 
we get (\ref{dilab-eq-prop-class-odd-close-explicite-1}).

For $p=0,\cdots,n$, we have
\begin{equation}
V_t^{-1}|_{\Omega^{(p,p,\cdot)}}=(4t(1-t))^{-p} \;,
\end{equation} whose derivative at $t=\frac{1}{2}$ is zero. This proves (\ref{dilab-eq-prop-class-odd-close-explicite-2}).
\end{proof}

\subsection{Multiplication of odd characteristic forms}
\label{dilab-subsect-mul-odd-ch}

\ 

Put
\begin{equation}
P\big(E, g^E\big) = \varphi P\big(-A^{E,2}_\frac{1}{2}\big) \;. 
\end{equation}

\begin{prop}
\label{dilab-prop-multi-cs}
Let $P,Q$ be two invariant polynomials.
The following identity holds
\begin{equation}
\label{dilab-eq-prop-multi-cs}
\widetilde{PQ}\big(E, g^E\big) =  \widetilde{P}\big(E, g^E\big) \wedge Q\big(E, g^E\big) 
                                     +  P\big(E, g^E\big) \wedge \widetilde{Q}\big(E, g^E\big) \;.
\end{equation}
\end{prop}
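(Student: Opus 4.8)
The plan is to reduce the claimed Leibniz-type identity to two elementary facts: a product rule for the polarization pairing $\langle P'(\cdot),\cdot\rangle$ defined in \eqref{dilab-eq-intro-def-Pder}, and the multiplicativity of the normalization operator $\varphi$. First I would fix the two $\mathrm{End}(E)$-valued forms that enter all the characteristic forms at $t=\tfrac12$, namely
\[
a = -A^{E,2}_{1/2} = -A^{E,2} \in \Omega^\mathrm{even}(\mathcal{N},\mathrm{End}(E)) \;,\qquad
b = \frac{\omega^E}{2} \in \Omega^\mathrm{odd}(\mathcal{N},\mathrm{End}(E)) \;,
\]
so that by definition $\widetilde{P}(E,g^E) = \sqrt{2\pi i}\,\varphi\,\langle P'(a),b\rangle$ and $P(E,g^E) = \varphi\,P(a)$, and likewise for $Q$ and for the product polynomial $PQ$. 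Since both the product $PQ$ and the pairing are bilinear in the polynomial arguments, there is no loss in assuming $P$ and $Q$ homogeneous, of degrees $p$ and $q$.

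The key algebraic step is the product rule for the pairing, obtained by differentiating the factorization $(PQ)(a+sb) = P(a+sb)\,Q(a+sb)$ at $s=0$:
\[
\big\langle (PQ)'(a),b \big\rangle = \big\langle P'(a),b \big\rangle\, Q(a) + P(a)\,\big\langle Q'(a),b \big\rangle \;.
\]
This is purely formal and uses only \eqref{dilab-eq-intro-def-Pder}; because $P(a)$, $Q(a)$ and the pairings are \emph{scalar}-valued forms living in the supercommutative algebra $\Omega^\cdot(\mathcal{N})$, no ordering ambiguity arises when I commute the factors.

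Next I would apply $\sqrt{2\pi i}\,\varphi$ to both sides, using that $\varphi = (2\pi i)^{-\frac12 N^{\Lambda^\cdot(T^*\mathcal{N})}}$ is an algebra homomorphism for the wedge product, i.e. $\varphi(\alpha\wedge\beta)=\varphi\alpha\wedge\varphi\beta$, which holds since $(2\pi i)^{-(k+l)/2}=(2\pi i)^{-k/2}(2\pi i)^{-l/2}$ on forms of degrees $k$ and $l$. This gives
\[
\widetilde{PQ}(E,g^E) = \sqrt{2\pi i}\,\varphi\big\langle P'(a),b \big\rangle \wedge \varphi Q(a) + \varphi P(a) \wedge \sqrt{2\pi i}\,\varphi\big\langle Q'(a),b \big\rangle \;,
\]
and the right-hand side is precisely $\widetilde{P}(E,g^E)\wedge Q(E,g^E) + P(E,g^E)\wedge\widetilde{Q}(E,g^E)$ by the definitions recalled above.

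The only point that genuinely requires care — and which I expect to be the main (though mild) obstacle — is the bookkeeping of the single factor $\sqrt{2\pi i}$ that the transgression $\widetilde{\phantom{P}}$ carries, once $\varphi$ redistributes the powers of $2\pi i$ by form-degree: one must check it attaches to the transgressed factor in each summand and is not doubled. I would settle this by a degree count. For homogeneous $P,Q$ the form $\langle (PQ)'(a),b\rangle$ has degree $2(p+q)-1$, so $\sqrt{2\pi i}\,\varphi$ contributes the scalar $(2\pi i)^{-(p+q)+1}$; on the right-hand side the first product contributes $(2\pi i)^{-p+1}$ from $\widetilde{P}(E,g^E)$ times $(2\pi i)^{-q}$ from $Q(E,g^E)$, giving the same total power $(2\pi i)^{-(p+q)+1}$, and symmetrically for the second product. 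Hence the normalizations match term by term and the identity \eqref{dilab-eq-prop-multi-cs} follows.
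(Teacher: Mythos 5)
Your proposal is correct and follows essentially the same route as the paper: the paper's proof consists precisely of the Leibniz identity $\langle (PQ)'(-A^{E,2}),\tfrac{\omega^E}{2}\rangle = \langle P'(-A^{E,2}),\tfrac{\omega^E}{2}\rangle\wedge Q(-A^{E,2}) + P(-A^{E,2})\wedge\langle Q'(-A^{E,2}),\tfrac{\omega^E}{2}\rangle$, from which the statement is said to follow. Your additional verification that $\varphi$ is multiplicative and that the single factor $\sqrt{2\pi i}$ distributes correctly is exactly the bookkeeping the paper leaves implicit.
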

\begin{proof}
We have
\begin{align}
\label{dilab-proof-prop-multi-cs-eq-1}
\begin{split}
     & \left\langle (PQ)'\big(-A^{E,2}\big),\frac{\omega^E}{2} \right\rangle \\
= \; & \left\langle P'\big(-A^{E,2}\big),\frac{\omega^E}{2} \right\rangle \wedge Q\big(-A^{E,2}\big) + 
                    P\big(-A^{E,2}\big) \wedge \left\langle Q'\big(-A^{E,2}\big),\frac{\omega^E}{2} \right\rangle \;,
\end{split}
\end{align}
which implies \eqref{dilab-eq-prop-multi-cs}.
\end{proof}

For $(\alpha,\tilde{\alpha}), (\beta,\tilde{\beta})\in\Omega^{\mathrm{even}}(\mathcal{N})
\times\Omega^{\mathrm{odd}}(\mathcal{N})$, put
\begin{equation}
(\alpha,\tilde{\alpha})\cdot(\beta,\tilde{\beta}) = (\alpha\wedge\beta,\tilde{\alpha}\wedge\beta+\alpha\wedge\tilde{\beta}) \;.
\end{equation}
Then
$\big(\Omega^{\mathrm{even}}(\mathcal{N})\times\Omega^{\mathrm{odd}}(\mathcal{N}),\,+\,,\,\cdot\;\big)$
is a commutative ring. 

Let $\left( \mathbb{C}\big[\mathfrak{gl}(r,\mathbb{C})\big] \right)^{\mathrm{GL}(r,\mathbb{C})}$
be the ring of invariant polynomials on $\mathfrak{gl}(r,\mathbb{C})$.

\begin{prop}
\label{dilab-prop-mul-odd-char}
The following map is a ring homomorphism.
\begin{align}
\begin{split}
\left( \mathbb{C}\big[\mathfrak{gl}(r,\mathbb{C})\big] \right)^{\mathrm{GL}(r,\mathbb{C})} & \rightarrow 
\Omega^{\mathrm{even}}(\mathcal{N})\times\Omega^{\mathrm {odd}}(\mathcal{N}) \\
            P & \mapsto \left(P\big(E, g^E\big),\widetilde{P}\big(E, g^E\big)\right) \;.
\end{split}
\end{align}
\end{prop}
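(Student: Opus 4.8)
The plan is to verify the two ring homomorphism axioms directly, namely that the map respects multiplication and sends the identity invariant polynomial to the multiplicative unit of $\big(\Omega^{\mathrm{even}}(\mathcal{N})\times\Omega^{\mathrm{odd}}(\mathcal{N}),+,\cdot\big)$. Linearity and additivity of $P\mapsto\big(P(E,g^E),\widetilde{P}(E,g^E)\big)$ are immediate from the definitions of $P(E,g^E)=\varphi P(-A^{E,2}_{1/2})$ and $\widetilde{P}(E,g^E)=\sqrt{2\pi i}\,\varphi\,\langle P'(-A^{E,2}_{1/2}),\tfrac{\omega^E}{2}\rangle$, since both $P\mapsto P(\cdot)$ and $P\mapsto\langle P'(\cdot),\cdot\rangle$ are linear in $P$, and so I would only remark on these briefly.

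First I would treat the multiplicativity of the even component. Writing $P(E,g^E)\cdot Q(E,g^E)$ in the ring means taking the product of the even parts, namely $\varphi P(-A^{E,2}_{1/2})\wedge\varphi Q(-A^{E,2}_{1/2})$. The only genuine point is that $\varphi$ is multiplicative up to the correct normalization: since $\varphi=(2\pi i)^{-\frac12 N^{\Lambda^\cdot(T^*\mathcal{N})}}$ rescales a form of total degree $k$ by $(2\pi i)^{-k/2}$, and degrees add under wedge product, one has $\varphi(\xi\wedge\eta)=\varphi\xi\wedge\varphi\eta$. Because $P$ and $Q$ are evaluated at the same argument $-A^{E,2}_{1/2}$, the matrix-level identity $(PQ)(a)=P(a)Q(a)$ combined with this multiplicativity of $\varphi$ yields $(PQ)(E,g^E)=P(E,g^E)\wedge Q(E,g^E)$, which is the even component of the product in the target ring.

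Next I would handle the odd component, and here the main content is already recorded in Proposition \ref{dilab-prop-multi-cs}. That proposition, whose proof rests on the Leibniz rule for the polarized derivative,
\begin{equation}
\Big\langle (PQ)'(a),b\Big\rangle = \Big\langle P'(a),b\Big\rangle\, Q(a) + P(a)\,\Big\langle Q'(a),b\Big\rangle \;,
\end{equation}
gives exactly
\begin{equation}
\widetilde{PQ}(E,g^E)=\widetilde{P}(E,g^E)\wedge Q(E,g^E)+P(E,g^E)\wedge\widetilde{Q}(E,g^E) \;,
\end{equation}
which is precisely the odd component of $\big(P(E,g^E),\widetilde{P}(E,g^E)\big)\cdot\big(Q(E,g^E),\widetilde{Q}(E,g^E)\big)$ as spelled out by the ring multiplication defined just before the statement. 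Thus the two displayed identities together say exactly that the pair $(P(E,g^E),\widetilde P(E,g^E))$ multiplies correctly, so the map is a ring homomorphism.

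I do not anticipate a hard obstacle, since the essential algebraic identity is already isolated in Proposition \ref{dilab-prop-multi-cs}; the only care needed is bookkeeping with the normalization factor $\varphi$ and the factor $\sqrt{2\pi i}$, to confirm that the powers of $(2\pi i)$ produced on the two sides of the odd-component identity genuinely match. Concretely, one checks that in $\widetilde{P}(E,g^E)\wedge Q(E,g^E)$ the prefactor $\sqrt{2\pi i}$ appears once and the total number operator weight of the wedge equals the sum of weights, so that $\varphi$ applied to the product agrees with the product of the separately normalized factors; the symmetric term behaves identically. The final remark is to identify the unit: the constant invariant polynomial $P\equiv 1$ has $P(E,g^E)=1$ and $\widetilde P(E,g^E)=0$, so it maps to $(1,0)$, which is the multiplicative identity of the target ring, completing the verification.
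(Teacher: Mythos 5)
Your proposal is correct and follows essentially the same route as the paper: the paper's proof consists of the single remark that the statement is a direct consequence of Proposition \ref{dilab-prop-multi-cs}, which is exactly the Leibniz-rule identity you invoke for the odd component. The additional bookkeeping you supply (multiplicativity of $\varphi$ for the even component, the placement of the single factor $\sqrt{2\pi i}$, additivity, and the unit going to $(1,0)$) is just the routine verification the paper leaves implicit.
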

\begin{proof}
This is a direct consequence of Proposition \ref{dilab-prop-multi-cs}.
\end{proof}

Let $F$ be another complex vector bundle over $\mathcal{N}$ satisfying the same properties as $E$. 
Let $r'$ be the rank of $F$. 
Let $g^F$ be a Hermitian metric on $F$.
Let $Q$ be an invariant polynomial on $\mathfrak{gl}(r',\mathbb{C})$.

\begin{defn}
We define
\begin{equation}
\widetilde{P}\big(E, g^E\big)*\widetilde{Q}\big(F, g^F\big) = 
\widetilde{P}\big(E, g^E\big)Q\big(F, g^F\big) +
P\big(E, g^E\big)\widetilde{Q}\big(F, g^F\big) \;.
\end{equation}
\end{defn}

\begin{prop}
\label{dilab-prop-mul-odd-char-EF}
The differential form
\begin{equation}
q_*\big[\widetilde{P}\big(E, g^E\big)*\widetilde{Q}\big(F, g^F\big)\big] \in \Omega^\mathrm{odd}(M) 
\end{equation}
is closed. Its cohomology class is independent of $g^E$ and $g^F$.
\end{prop}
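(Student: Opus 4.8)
The plan is to reduce to the case where $P$ and $Q$ are homogeneous (the $*$-product being bilinear) and then to run a transgression argument in the parameter $t$ parallel to the proof of Proposition~\ref{dilab-prop-class-odd}, but applied to the even product form. For $t\in\mathbb{R}$ write $P_t=\varphi P(-A^{E,2}_t)$ and $\widetilde{P}_t=\widetilde{P}_t(E,g^E)$, and similarly $Q_t,\widetilde{Q}_t$ for $(F,g^F)$; set $\Phi_t=\widetilde{P}_t Q_t+P_t\widetilde{Q}_t$, so that $\Phi_{1/2}=\widetilde{P}(E,g^E)*\widetilde{Q}(F,g^F)$. One might hope to shortcut the whole statement by writing $\widetilde{P}*\widetilde{Q}=\widetilde{R}(E\oplus F,g^E\oplus g^F)$ for a single invariant polynomial $R$ on $\mathfrak{gl}(r+r',\mathbb{C})$ and invoking Proposition~\ref{dilab-prop-class-odd} directly; this fails because the restriction of invariant polynomials to the block-diagonal subalgebra is not surjective (e.g. $\mathrm{Tr}(X)\mathrm{Tr}(Y)^2$ is not the restriction of any invariant polynomial), so the product structure has to be treated on its own.

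For closedness I would argue as follows. Since each $A^E_t=d_N^E+t\,d_M^E+(1-t)\,d_M^{E,*}$ is a genuine connection on $E$ over $\mathcal{N}$, Chern--Weil theory (Theorem~\ref{dilab-intro-thm-chern-weil}) gives $d_\mathcal{N}P_t=d_\mathcal{N}Q_t=0$ for all $t$. The transgression identity \eqref{dilab-eq-closeness-chern-simons} reads $\sqrt{2\pi i}\,\partial_t P_t=2d_\mathcal{N}\widetilde{P}_t$, and likewise for $Q$. Differentiating the product and using the Leibniz rule for $d_\mathcal{N}$ together with $d_\mathcal{N}P_t=d_\mathcal{N}Q_t=0$ yields
\begin{equation}
\sqrt{2\pi i}\,\frac{\partial}{\partial t}(P_t Q_t)=2d_\mathcal{N}\big(\widetilde{P}_t Q_t+P_t\widetilde{Q}_t\big)=2d_\mathcal{N}\Phi_t \;.
\end{equation}
Applying $q_*$ and using that the fiber $N$ is closed, so that $q_* d_N=0$ and hence $q_* d_\mathcal{N}=d_M q_*$, I get $\sqrt{2\pi i}\,\partial_t q_*[P_t Q_t]=2d_M q_*[\Phi_t]$. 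It remains to check that the left-hand side vanishes at $t=1/2$. For this I would use the rescaling \eqref{dilab-eq-connection-t-rescaling}: since $V_t$ is an algebra automorphism of $\Omega^\cdot(\mathcal{N})$ and $P,Q$ are homogeneous, $P_t Q_t=(4t(1-t))^{\deg P+\deg Q}V_t^{-1}(P_{1/2}Q_{1/2})$, and as $q_*$ only retains the $(n,n,\cdot)$ fiber component, on which $V_t^{-1}=(4t(1-t))^{-n}$, one finds $q_*[P_t Q_t]=(4t(1-t))^{\deg P+\deg Q-n}q_*[P_{1/2}Q_{1/2}]$. This is invariant under $t\mapsto 1-t$, so its $t$-derivative vanishes at the fixed point $t=1/2$. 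Therefore $d_M q_*[\widetilde{P}(E,g^E)*\widetilde{Q}(F,g^F)]=0$.

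For the metric independence I would invoke the functoriality of the construction, exactly as in \cite[\textsection 1.5]{bgv}. Given two metrics on $E$ (and on $F$), pull the whole flat fibration back to the base $M\times\mathbb{R}$ and equip the bundles with metrics interpolating the two choices near the two ends. The closedness just proved applies verbatim to this fibration over $M\times\mathbb{R}$, producing a closed odd form whose restriction to the slice $M\times\{s\}$ equals $q_*[\widetilde{P}(E,g^E_s)*\widetilde{Q}(F,g^F_s)]$ (restriction to a slice commutes with the fiber integration $q_*$ and kills the $ds$-components of $A^E$ and $A^F$). Since all inclusions $M\times\{s\}\hookrightarrow M\times\mathbb{R}$ are homotopic, the restrictions to $s=0$ and $s=1$ represent the same class in $H^\mathrm{odd}(M)$, which gives the independence of $g^E$ and $g^F$. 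The main obstacle is the closedness step: one must resist the temptation of the $E\oplus F$ shortcut and instead extract the vanishing of $\partial_t q_*[P_t Q_t]$ at $t=1/2$ from the $t\mapsto 1-t$ symmetry produced by the $V_t$-rescaling.
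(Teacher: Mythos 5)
Your proposal is correct and follows essentially the same route as the paper, whose proof likewise reduces everything to the key transgression identity $2\,d_\mathcal{N}\big(\widetilde{P}(E,g^E)*\widetilde{Q}(F,g^F)\big)=\sqrt{2\pi i}\,\varphi\,\frac{\partial}{\partial t}\big(P(-A^{E,2}_t)Q(-A^{F,2}_t)\big)_{t=1/2}$ and then reruns the argument of Proposition \ref{dilab-prop-class-odd}. The only (harmless) variation is that you obtain the vanishing of $\frac{\partial}{\partial t}q_*\big[P(-A^{E,2}_t)Q(-A^{F,2}_t)\big]$ at $t=\tfrac{1}{2}$ from the $t\mapsto 1-t$ symmetry produced by the $V_t$-rescaling, whereas the paper's method (the product analogue of Theorem \ref{dilab-thm-gen-flat2trivialclass}) shows this fiber integral is constant in $t$.
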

\begin{proof}
The argument leading to Proposition \ref{dilab-prop-class-odd} still works: 
the key step is to show that
\begin{equation}
2 d_\mathcal{N} \widetilde{P}\big(E, g^E\big)*\widetilde{Q}\big(F, g^F\big) =
\sqrt{2\pi i}\varphi \frac{\partial}{\partial t} \Big( P(-A^{E,2}_t)Q(-A^{F,2}_t) \Big)_{t=1/2} \;.
\end{equation}
\end{proof}

\section{A Riemann-Roch-Grothendieck formula}
\label{dilab-section-rrg}

The purpose of this section is 
to establish a Riemann-Roch-Grothendieck formula, 
that express the odd Chern classes associated with the 
flat vector bundle $H^{\cdot}\left(N,E\right)$ in  terms of the 
exotic characteristic classes that were defined in \textsection \ref{dilab-subsection-chern-simons-char}.
This section is organized as follows.

In \textsection \ref{dilab-subsection-flat-superconnection}, 
we introduce the infinite dimensional flat vector bundle  
$\mathscr{E}=\Omega^{(0,\cdot)}(N,E)$.

In \textsection \ref{dilab-subsec-vertical-metrics}, 
we equip $TN$ with a fiberwise K{\"a}hler metric, $E$ with a Hermitian metric. 

In \textsection \ref{dilab-subsec-levi-civita-supperconnection}, 
we introduce the Levi-Civita superconnection on $\mathscr{E}$.

In \textsection \ref{dilab-subsec-index-bundle}, 
we define the index bundle, which is the fiberwise Dolbeault cohomology of $E$. 
We show that the even characteristic form of the index bundle 
is a constant function on $M$.

In \textsection \ref{dilab-subsec-riemann-roch-grothendieck}, 
we construct differential forms $\alpha_t$, $\beta_t$ 
in the same way as \cite[\textsection 3(h)]{bl}. 
We state explicit formulas calculating their asymptotics 
as $t\rightarrow\infty$ and $t\rightarrow 0$.
As a consequence of these formulas, 
we obtain a Riemann-Roch-Grothendieck formula.

In \textsection \ref{dilab-subsec-intermediate-rrg}, 
we prove the asymptotics of $\alpha_t$, $\beta_t$ stated in \textsection \ref{dilab-subsec-riemann-roch-grothendieck}. 
The techniques applied in the proof were  initiated 
by Bismut-Gillet-Soul{\'e} \cite[\textsection 1(h)]{bgs3} and 
Bismut-K\"{o}hler \cite{bk}. 
The key idea is a Lichnerowicz formula involving additional 
Grassmannian variables $da$, $d\bar{a}$. 

In \textsection \ref{dilab-subsect-torsionform}, 
following \cite[\textsection 3(j)]{bl}, we construct  analytic 
torsion forms 
on  $M$, that transgress the R.R.G. formula at the level of 
differential forms.

\subsection{A flat superconnection and its dual}
\label{dilab-subsection-flat-superconnection}

\

Set
\begin{equation}
\label{dilab-eq-def-inf-E}
\mathscr{E}^q = \mathscr{C}^\infty(N,\Lambda^q(\overline{T^*N})\otimes E) \;,\hspace{5mm} 
\mathscr{E} = \bigoplus_q \mathscr{E}^q \index{Emathscr@$\mathscr{E}$}\;.
\end{equation}
Then $\mathscr{E}$ is an infinite dimensional flat vector bundle over $M$. 
By \eqref{dilab-eq2-triple-splitting-forms}, 
we have the identification 
\begin{equation}
\Omega^\cdot(M,\mathscr{E}) = \Omega^{(0,\cdot,\cdot)}(\mathcal{N},E) \;.
\end{equation} 

Let $\n^\mathscr{E} \index{nablaEmathscr@$\n^\mathscr{E}$}$ be the restriction of $d_M^E$ to $\Omega^\cdot(M,\mathscr{E})$. 
Then $\n^\mathscr{E}$ is the canonical flat connection on $\mathscr{E}$.

Set
\begin{equation}
\label{dilab-eq-decomposition-superconnection-infi-dim}
A^\mathscr{E} = \overline{\partial}^E_N + \n^\mathscr{E} \index{AEmathscr@$A^\mathscr{E}$} \;,
\end{equation}
which acts on $\Omega^\cdot(M,\mathscr{E})$. 
Then $A^\mathscr{E}$ is a superconnection on $\mathscr{E}$.

Recall that the operator ${A^E}''$ on $\Omega^\cdot(\mathcal{N},E)$ was defined in \eqref{dilab-eq-def-flat-superconnection}.
We have
\begin{equation}
A^\mathscr{E} = 
{A^E}''\big|_{\Omega^{(0,\cdot,\cdot)}(\mathcal{N},E)} \;.
\end{equation}
Then, by \eqref{dilab-eq-AEsquare=0}, we have
\begin{equation}
\label{dilab-eq-Asquare=0}
A^{\mathscr{E},2} = 0 \;,
\end{equation}
which is equivalent to the following identities
\begin{equation}
\overline{\partial}^{E,2}_N = \n^{\mathscr{E},2} = \big[ \overline{\partial}^E_N , \n^\mathscr{E} \big] = 0 \;.
\end{equation}

Set
\begin{equation}
\overline{\mathscr{E}}^* = \mathscr{C}^\infty(N,\Lambda^\cdot(T^*N)\otimes\Lambda^n(\overline{T^*N})\otimes \overline{E}^*) \index{Emathscr dual@$\overline{\mathscr{E}}^*$} \;.
\end{equation} 
Then $\overline{\mathscr{E}}^*$ is an infinite dimensional flat vector bundle over $M$. 
We have the identification 
\begin{equation}
\Omega^\cdot(M,\overline{\mathscr{E}}^*) = \Omega^{(\cdot,n,\cdot)}(\mathcal{N},\overline{E}^*) \;.
\end{equation} 

Let $\n^{\overline{\mathscr{E}}^*}$ be the restriction of $d_M^{\overline{E}^*}$ to $\Omega^\cdot(M,\overline{\mathscr{E}}^*)$. 
Then  $\n^{\overline{\mathscr{E}}^*}$ is the canonical flat connection on $\overline{\mathscr{E}}^*$.  
Set
\begin{equation}
\label{dilab-eq-decomposition-superconnection-infi-dim-dual}
A^{\overline{\mathscr{E}}^*}  = \partial^{\overline{E}^*}_N + \n^{\overline{\mathscr{E}}^*} \index{AEmathscr dual@$A^{\overline{\mathscr{E}}^*}$}\;,
\end{equation}
which acts on $\Omega^\cdot(M,\overline{\mathscr{E}}^*)$. 
Then $A^{\overline{\mathscr{E}}^*}$ is a superconnection on $\overline{\mathscr{E}}^*$.

Recall that the operator ${A^{\overline{E}^*}}'$ on $\Omega^\cdot(\mathcal{N},\overline{E}^*)$ 
was defined in \eqref{dilab-eq-def-flat-superconnection-antidual}. 
We have
\begin{equation}
A^{\overline{\mathscr{E}}^*}  = {A^{\overline{E}^*}}'\big|_{\Omega^{(\cdot,n,\cdot)}(\mathcal{N},\overline{E}^*)} \;.
\end{equation}
Then, by \eqref{dilab-eq-AEdualsquare=0}, we have
\begin{equation}
\label{dilab-eq-Adualsquare=0}
A^{\overline{\mathscr{E}}^*,2} =0 \;.
\end{equation}

Let
\begin{equation}
(\cdot,\cdot)_E : \overline{E}^* \times E \rightarrow \mathbb{C}
\end{equation}
be the canonical sesquilinear form, which extends to
\begin{equation}
(\cdot,\cdot)_E : \big( \Lambda^p(T^*N)\otimes\Lambda^n(\overline{T^*N})\otimes \overline{E}^* \big) \times \big( \Lambda^q(\overline{T^*N})\otimes E \big) \rightarrow \Lambda^{p+q}(T^*N)\otimes\Lambda^n(\overline{T^*N}) \;.
\end{equation}
We define 
\begin{align}
(\cdot,\cdot)_\mathscr{E} : \overline{\mathscr{E}}^* \times \mathscr{E} & \rightarrow \mathbb{C} \nonumber\\
(\alpha,\beta) & \mapsto \int_N (\alpha,\beta)_E \;.
\end{align}
Thus $\overline{\mathscr{E}}^*$ is formally the anti-dual of $\mathscr{E}$. 
For $\alpha\in\Omega^\cdot(M,\overline{\mathscr{E}}^*)$ and $\beta\in\Omega^\cdot(M,\mathscr{E})$, the following identities hold 
\begin{align}
\label{dilab-eq-dualrelation-connectoin-infi-dim}
\begin{split}
(\partial^{\overline{E}^*}_N\alpha,\beta)_\mathscr{E} + (-1)^{\deg\alpha}(\alpha,\overline{\partial}^E_N\beta)_\mathscr{E} & = 0 \;,\\
(\n^{\overline{\mathscr{E}}^*}\alpha,\beta)_\mathscr{E} + (-1)^{\deg\alpha}(\alpha,\n^\mathscr{E}\beta)_\mathscr{E} & = d_M(\alpha,\beta)_\mathscr{E} \;.
\end{split}
\end{align}
By (\ref{dilab-eq-decomposition-superconnection-infi-dim}), (\ref{dilab-eq-decomposition-superconnection-infi-dim-dual}) and (\ref{dilab-eq-dualrelation-connectoin-infi-dim}), we get
\begin{equation}
(A^{\overline{\mathscr{E}}^*}\alpha,\beta)_\mathscr{E} + (-1)^{\deg\alpha}(\alpha,A^\mathscr{E}\beta)_\mathscr{E} = d_M(\alpha,\beta)_\mathscr{E} \;,
\end{equation}
i.e., $A^{\overline{\mathscr{E}}^*}$ is the dual superconnection of $A^\mathscr{E}$ in the sense of \cite[Definition 1.5]{bl}.

\subsection{Hermitian metrics and connections on $TN$ and $E$}
\label{dilab-subsec-vertical-metrics}

\

From now on, we will assume that $N$ is a K\"{a}hler manifold.

Let $J : T_\mathbb{R}N\rightarrow T_\mathbb{R}N \index{J@$J$}$ be the complex structure of $N$.

\begin{prop}
There exists a fiberwise K{\"a}hler metric $g^{TN}$ on $TN$, 
i.e., a Hermitian metric on $TN$ whose restriction to each fiber $N$ is a K{\"a}hler metric.
\end{prop}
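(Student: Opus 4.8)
The plan is to patch pullbacks of a fixed Kähler metric on the model fiber $N$ by means of a partition of unity on the base $M$, exploiting the fact that the Kähler condition is convex. First I would fix, once and for all, a Kähler metric on the compact Kähler manifold $N$, with associated positive $(1,1)$-form $\omega_0$ satisfying $d_N\omega_0=0$. Since $G$ acts holomorphically on $N$, the fibration $q:\mathcal{N}=P_G\times_G N\to M$ is locally trivial with trivializations holomorphic along the fibers: over a trivializing open set $U_i\subseteq M$ of $P_G$ one has $P_G|_{U_i}\cong U_i\times G$, hence a diffeomorphism $\phi_i:\mathcal{N}|_{U_i}\to U_i\times N$ covering $\mathrm{id}_{U_i}$ and restricting to a biholomorphism on each fiber. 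On an overlap $U_i\cap U_j$ the two trivializations differ by the holomorphic $G$-action; consequently, on each fiber $N_x=q^{-1}(x)$ every form introduced below is of type $(1,1)$ with respect to the single fiberwise complex structure $J$.

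For each $i$, pulling back $\omega_0$ via $\phi_i$ and restricting to fiber directions produces a form whose restriction $\omega_i|_{N_x}$ to every fiber is a Kähler form for $J$; let $g^{TN}_i$ be the associated fiberwise Hermitian metric on $TN$ over $\mathcal{N}|_{U_i}$. Choosing a partition of unity $\{\rho_i\}$ on $M$ subordinate to $\{U_i\}$ with $\rho_i\geq 0$ and $\sum_i\rho_i=1$, I would set
\begin{equation}
g^{TN}=\sum_i (q^*\rho_i)\,g^{TN}_i \;.
\end{equation}
The sum is locally finite, hence smooth, and fiberwise positive definite, so $g^{TN}$ is a fiberwise Hermitian metric on $TN$.

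It remains to verify that $g^{TN}$ restricts to a Kähler metric on each fiber $N_x$. Because the association of a $(1,1)$-form to a Hermitian metric is linear, the Kähler form of $g^{TN}$ on $N_x$ equals $\sum_i\rho_i(x)\,\omega_i|_{N_x}$. Since the $\rho_i$ are pulled back from $M$, their values $\rho_i(x)$ are constant along $N_x$, whence $d_N\big(\sum_i\rho_i(x)\,\omega_i|_{N_x}\big)=\sum_i\rho_i(x)\,d_N\big(\omega_i|_{N_x}\big)=0$, while positivity is preserved because a convex combination of positive $(1,1)$-forms is positive. Hence the restriction is Kähler, as desired. The only genuinely delicate point is that the patching coefficients be fiberwise constant, so that the fiberwise operator $d_N$ passes through them; this is exactly why one patches over $M$ rather than over $\mathcal{N}$, and it is what makes the convexity of the Kähler condition applicable. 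The rest is routine.
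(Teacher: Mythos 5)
Your proposal is correct and follows essentially the same route as the paper: pull back a fixed Kähler metric on the model fiber through the local flat trivializations and patch with a partition of unity pulled back from $M$, using the convexity of the Kähler condition. The paper states this in a single line; your additional remarks on the holomorphy of the transition maps and the fiberwise constancy of the patching coefficients are exactly the points that make the one-line argument work.
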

\begin{proof} 
Let $(U_i)$ be a locally finite cover of $M$ by open balls. Let $(f_i : U_i \rightarrow \mathbb{R})$ be a partition of unity.
For each $U_i$, we have the trivialization 
$\varphi_i : q^{-1}(U_i) \rightarrow N \times U_i$ 
as flat fibration. 
Let $p_i : N \times U_i \rightarrow N$ 
be the canonical projection.
Let $g^{TN}_0$ be a K{\"a}hler metric on $TN_0$.
Set
\begin{equation}
g^{TN} = \sum_i \big(q^*f_i\big)\big(\varphi_i^* p_i^* g^{TN}_0\big) \;.
\end{equation}
Then $g^{TN}$ satisfies the desired properties.
\end{proof}

Let $g^{TN}\index{gTN@$g^{TN}$}$ be a fiberwise K{\"a}hler metric on $TN$.
Let
\begin{equation}
\omega\in\smooth\big(\mathcal{N},T^*N\otimes\overline{T^*N}\big) \index{omega@$\omega$}
\end{equation} 
be the associated fiberwise K{\"a}hler form. 
Let 
\begin{equation}
dv_N = \frac{\omega^n}{n!} \in\smooth\big(\mathcal{N},\Lambda^{2n}(T^*_\mathbb{R}N)\big)
\end{equation}
be the induced fiberwise volume form.

Let $g^{\overline{TN}}$ and $g^{\Lambda^\cdot(\overline{T^*N})}$  
be the Hermitian metrics on $\overline{TN}$ and $\Lambda^\cdot(\overline{T^*N})$
induced by $g^{TN}$.

Let $g^{T_\mathbb{R}N}$ be the Riemannian metric on $T_\mathbb{R}N$ induced by $g^{TN}$.

Let $\nabla^{T_\mathbb{R}N}\index{nablaTNR@$\nabla^{T_\mathbb{R}N}$}$ be the conection on $T_{\mathbb R}N$ 
associated with $\big(g^{T_{\mathbb R}N},T^{H}\mathcal{N}\big)$ 
in the same way as in \textsection \ref{dilab-subsec-fibration-connection-metric}.
Recall that the connection $A^{TN}$ on $TN$ is defined by \eqref{dilab-eq-def-superconnection}. 
In the sequel, we change the notation as follows
\begin{equation}
\n^{TN} =  A^{TN} \index{nablaTN@$\n^{TN}$}\;.
\end{equation}
Since the metric $g^{TN}$ is fiberwise K\"{a}hler, 
the connection on $T_{\R}N$ induced by $\n^{TN}$ along the fibre $N$ coincides with $\nabla^{T_{\R}N}$. 
Moreover, the complex structure of $T_{\R}N$ is flat with respect to the flat connection on $\mathcal{N}$.  
By \eqref{eq:corr1}, \eqref{eq:corr2} and \eqref{dilab-eq-def-omegaE}, 
these two connections also coincide in horizontal directions. 
The conclusion is that 
the connection $\n^{T_{\mathbb R}N}$ preserves the complex structure $J$, 
and induces the connection $\n^{TN}$ on $TN$. 

Let $\n^{\overline{TN}}\index{nablaTNbar@$\n^{\overline{TN}}$}$ and $\n^{\Lambda^\cdot(\overline{T^*N})}\index{nablalambdaTN@$\n^{\Lambda^\cdot(\overline{T^*N})}$}$ 
be the connections on $\overline{TN}$ and $\Lambda^\cdot(\overline{T^*N})$
induced by $\n^{TN}$.

Let $g^E\index{gE@$g^E$}$ be a Hermitian metric of $E$. 
Let $\n^E$ be the connection on $E$ defined by \eqref{dilab-eq-def-superconnection}.

Let $g^{\Lambda^\cdot(T^*_\mathbb{C}N)}$ be the $\mathbb{C}$-bilinear form on $\Lambda^\cdot(T^*_\mathbb{C}N)$ induced by $g^{TN}$. Let
\begin{equation}
* : \Lambda^\cdot(T^*_\mathbb{C}N) \rightarrow \Lambda^{2n-\cdot}(T^*_\mathbb{C}N) \index{star@$*$}
\end{equation} 
be the usual Hodge operator acting on $\Lambda^\cdot(T^*_\mathbb{C}N)$, 
i.e., for $\alpha, \beta \in \Lambda^\cdot(T^*_\mathbb{C}N)$,
\begin{equation*}
g^{\Lambda^\cdot(T^*_\mathbb{C}N)}(\alpha,\beta) dv_N = \alpha \wedge \overline{* \beta} \;. 
\end{equation*}
In particular, $*$ maps $\Lambda^\cdot(\overline{T^*N})$ to $\Lambda^n(T^*N)\otimes\Lambda^{n-\cdot}(\overline{T^*N})$.

The Hermitian metric $g^E$ induces an identification $g^E : E \rightarrow \overline{E}^*$. 
The Hodge operator $*$ extends to
\begin{equation}
*^E : \Lambda^\cdot(\overline{T^*N}) \otimes E \rightarrow \Lambda^n (T^*N) \otimes \Lambda^{n-\cdot}(\overline{T^*N}) \otimes \overline{E}^*  \index{starE@$*^E$} \;.
\end{equation}

Let $g^\mathscr{E}$ be a Hermitian metric on $\mathscr{E}$, such that for $\alpha,\beta\in\mathscr{E}$,
\begin{equation}
g^\mathscr{E}(\alpha,\beta) 
= \frac{1}{(2\pi)^n} 
\int_N (g^{\Lambda^\cdot(\overline{T^*N})}\otimes g^E)(\alpha,\beta)dv_N 
= \frac{(-1)^{\deg\alpha \, \deg\beta}}{(2\pi)^n}(*^E\alpha,\beta)_\mathscr{E} \index{gEmathscr@$g^\mathscr{E}$}\;.
\end{equation}

Set
\begin{equation}
\label{dilab-eq-def-omega-mathscrE}
\omega^{\mathscr{E}} = \big(g^\mathscr{E}\big)^{-1}\n^{\overline{\mathscr{E}}^*}g^\mathscr{E} 
\in \mathscr{C}^\infty(M, T^*M \otimes \mathrm{End}(\mathscr{E})) \index{omegaEmathscr@$\omega^\mathscr{E}$}
\end{equation}
and
\begin{equation}
k_N =  \big(dv_N\big)^{-1} d_M dv_N
\in \mathscr{C}^\infty(\mathcal{N}, T^*M) \index{kN@$k_N$}\;.
\end{equation}
We define $\omega^{TN}$ in the same way as in \eqref{dilab-eq-def-omegaE}.
Let $\omega^{\Lambda^{\cdot}\left(\overline{T^{*}N}\right)} \index{omegaLambdaTN@$\omega^{\Lambda^{\cdot}\left(\overline{T^{*}N}\right)}$}$ be  
the induced action of $\omega^{TN}$ on 
$\Lambda^{\cdot}(\overline{T^{*}N})$. Then 
$\omega^{\Lambda^{\cdot}\left(\overline{T^{*}N}\right)}$ is just the 
horizontal variation of the metric 
$g^{\Lambda^{\cdot}\left(\overline{T^{*}N}\right)}$ on 
$\Lambda^{\cdot}\left(\overline{T^{*}N}\right)$ 
with respect to the flat connection.
We have
\begin{equation}
\label{dilab-eq-omega-big-e}
\omega^\mathscr{E} = \omega^{\Lambda^\cdot(\overline{T^*N})} + \omega^E + k_N \;.
\end{equation}

\subsection{The Levi-Civita superconnection}
\label{dilab-subsec-levi-civita-supperconnection}

\

Recall that $A^\mathscr{E}$ and $A^{\overline{\mathscr{E}}^*}$
were defined in 
\eqref{dilab-eq-decomposition-superconnection-infi-dim} 
and \eqref{dilab-eq-decomposition-superconnection-infi-dim-dual}.

Set
\begin{equation}
A^{\mathscr{E},*} = (*^E)^{-1}A^{\overline{\mathscr{E}}^*}*^E \index{AEmathscradjoint@$A^{\mathscr{E},*}$}\;,
\end{equation}
which acts on $\Omega^\cdot(M,\mathscr{E})$. 
Then $A^{\mathscr{E},*}$ is the adjoint superconnection of $A^\mathscr{E}$ (with respect to $g^\mathscr{E}$) in the sense of \cite[Definition 1.6]{bl}.

By \eqref{dilab-eq-Adualsquare=0}, we have
\begin{equation}
\label{dilab-eq-Astarsquare=0}
A^{\mathscr{E},*,2} = 0 \;.
\end{equation}

Set
\begin{align}
\label{dilab-eq-def-c-d}
\begin{split}
C^\mathscr{E} = \; & 2^{-N^{\Lambda^\cdot(T^*M)}} \big(A^{\mathscr{E},*} + A^\mathscr{E}\big) 2^{N^{\Lambda^\cdot(T^*M)}} \index{CEmathscr@$C^\mathscr{E}$}\;,\\
D^\mathscr{E} = \; & 2^{-N^{\Lambda^\cdot(T^*M)}} \big(A^{\mathscr{E},*} - A^\mathscr{E}\big) 2^{N^{\Lambda^\cdot(T^*M)}} \index{DEmathscr@$D^\mathscr{E}$}\;.
\end{split}
\end{align}
By \eqref{dilab-eq-Asquare=0} and \eqref{dilab-eq-Astarsquare=0}, we have
\begin{equation}
\label{dilab-eq-bianchi}
C^{\mathscr{E},2} = -D^{\mathscr{E},2} = 
2^{-N^{\Lambda^\cdot(T^*M)}}
\big[A^\mathscr{E},A^{\mathscr{E},*}\big] 
2^{N^{\Lambda^\cdot(T^*M)}}\;, \hspace{5mm}
\big[C^\mathscr{E},D^\mathscr{E}\big] = 0 \;.
\end{equation}

Let $\overline{\partial}^{E,*}_N$ be the formal adjoint of $\overline{\partial}^E_N$ with respect to $g^\mathscr{E}$. Set
\begin{equation}
D^E_N = {\overline{\partial}}^E_N + {\overline{\partial}}^{E,*}_N \index{DEN@$D^E_N$} \;, 
\end{equation}
which acts on $\mathscr{E}$.
Then $D^E_N$ is the fiberwise $\text{spin}^c$-Dirac operator associated with $g^{TN}/2$.

We recall that $\n^\mathscr{E}$ is defined in \textsection\ref{dilab-subsection-flat-superconnection}. 
Let $\n^{\mathscr{E},*}\index{nablaEmathscradjoint@$\n^{\mathscr{E},*}$}$ be the adjoint connection. 
Then
\begin{equation}
\n^{\mathscr{E},*} = \n^\mathscr{E} + \omega^\mathscr{E} \;.
\end{equation}
Set
\begin{equation}
\label{dilab-eq-def-nEu-nE}
\n^{\mathscr{E},\mathrm{u}} = \frac{1}{2}\big(\n^{\mathscr{E},*}+\n^\mathscr{E}\big)
= \n^\mathscr{E} + \frac{1}{2}\omega^\mathscr{E} \index{nablaEmathscru@$\n^{\mathscr{E},\mathrm{u}}$}\;,
\end{equation}
which is a unitary connection on $\mathscr{E}$.

We have
\begin{equation}
\label{dilab-eq-lc-superconnection}
C^\mathscr{E} = D^E_N + \n^{\mathscr{E},\mathrm{u}} \;,\hspace{5mm}
D^\mathscr{E} = \overline{\partial}^{E,*}_N - \overline{\partial}^E_N + \frac{1}{2}\omega^\mathscr{E} \;.
\end{equation}

Recall that the Levi-Civita superconnection was introduced in 
\cite{b}.
\begin{prop}
\label{dilab-prop-lc-superconnection}
The superconnection $C^\mathscr{E}$ is the Levi-Civita superconnection associated with 
$\big(T^H\mathcal{N}, g^{T_{\mathbb R}N}, g^E\big)$.
\end{prop}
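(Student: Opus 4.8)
The plan is to compare $C^\mathscr{E}$ with Bismut's general construction of the Levi-Civita superconnection \cite{b} (see also \cite[\textsection 10.3]{bgv}) and to exploit flatness of the fibration to discard the one term that distinguishes a general fibration from a flat one. Recall that for a fibration equipped with a horizontal distribution, a fiberwise metric and a Hermitian metric on the coefficient bundle, the Levi-Civita superconnection has the shape
\[
\mathbb{B} = D + \nabla^{\mathscr{E},\mathrm{u}} - \tfrac{1}{4}c(T) \;,
\]
where $D$ is the fiberwise $\mathrm{spin}^c$-Dirac operator, $\nabla^{\mathscr{E},\mathrm{u}}$ is the canonical unitary connection on $\mathscr{E}$ attached to $\nabla^{TN}$, to $\nabla^E$ and to the fiberwise volume form, and $T$ is the curvature of the fibration from \eqref{dilab-eq-pre-def-T}. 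Since the connection on $P_G$ is flat, we have $T=0$ (cf. \textsection\ref{dilab-subsection-a-flat-fibration}), so the Clifford term vanishes and $\mathbb{B}=D+\nabla^{\mathscr{E},\mathrm{u}}$. In view of the formula $C^\mathscr{E}=D^E_N+\nabla^{\mathscr{E},\mathrm{u}}$ in \eqref{dilab-eq-lc-superconnection}, it then suffices to identify the zeroth-order part $D$ with $D^E_N$ and the connection part with the $\nabla^{\mathscr{E},\mathrm{u}}$ of \eqref{dilab-eq-def-nEu-nE}.

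For the Dirac part I would invoke the standard computation on a K\"ahler manifold. Since $g^{TN}$ is fiberwise K\"ahler and, as established in \textsection\ref{dilab-subsec-vertical-metrics}, the connection $\nabla^{T_{\mathbb R}N}$ preserves the complex structure $J$ and induces $\nabla^{TN}$, the fiberwise $\mathrm{spin}^c$-Dirac operator associated with $g^{TN}/2$ coincides with $\overline{\partial}^E_N+\overline{\partial}^{E,*}_N=D^E_N$ (cf. \cite{bk} and \cite[\textsection 3.6]{bgv}). This is exactly the identification already recorded just before the statement, so this half of the matching is essentially free.

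The substantive step is matching the connection. Starting from \eqref{dilab-eq-def-nEu-nE} together with the decomposition \eqref{dilab-eq-omega-big-e}, one has
\[
\nabla^{\mathscr{E},\mathrm{u}} = \nabla^\mathscr{E} + \tfrac{1}{2}\omega^{\Lambda^\cdot(\overline{T^*N})} + \tfrac{1}{2}\omega^E + \tfrac{1}{2}k_N \;,
\]
where $\nabla^\mathscr{E}$ acts by the horizontal Lie derivative. By the analogue of \eqref{eq:corr1} applied to the induced action on $\Lambda^\cdot(\overline{T^*N})$, the first correction upgrades this Lie-derivative action to the connection induced by $\nabla^{TN}$; likewise the term $\tfrac{1}{2}\omega^E$ turns the horizontal action on the coefficients into the Hermitian connection $\nabla^E$ defined in \eqref{dilab-eq-def-superconnection} (this is precisely the $\tfrac{1}{2}\omega^E$ appearing in $d_M^{E,\mathrm{u}}$ in \eqref{eq:corr2}). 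The remaining term $\tfrac{1}{2}k_N$, coming from the horizontal variation $k_N=(dv_N)^{-1}d_M\,dv_N$ of the fiberwise volume form, is exactly the correction making the tensor-product connection unitary for the $L^2$-metric $g^\mathscr{E}$, which is the defining property of Bismut's connection. I would verify this by evaluating both connections on a local $\nabla^{TN}$- and $\nabla^E$-parallel frame and checking that differentiating the volume factor in $g^\mathscr{E}$ produces precisely the term $-\tfrac{1}{2}k_N$.

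I expect the volume-form correction to be the main obstacle: the tangential and coefficient pieces match almost tautologically once the definitions are unwound, but confirming that $\tfrac{1}{2}k_N$ is the mean-curvature correction occurring in Bismut's unitary connection requires relating $k_N$ to the trace of the second fundamental form of the fibers (equivalently, to the divergence of the mean-curvature vector field). Because $T=0$, no curvature of the fibration enters and there is no $c(T)$ contribution to reconcile, so once the connection term is matched the identification $C^\mathscr{E}=\mathbb{B}$ is complete.
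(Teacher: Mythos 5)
Your proposal is correct and follows essentially the same route as the paper's proof: identify the fiberwise operator $D^E_N$ with the $\mathrm{spin}^c$-Dirac operator via the K\"ahler hypothesis, match the connection part with Bismut's unitary connection (the paper dispatches this by noting that $\n^{T_{\mathbb R}N}$ is exactly the connection considered in \cite{b}, where you unwind the decomposition $\omega^{\mathscr{E}}=\omega^{\Lambda^\cdot(\overline{T^*N})}+\omega^E+k_N$ more explicitly), and observe that the $c(T)$ term vanishes because the fibration is flat. Your more detailed treatment of the volume-form correction $\tfrac{1}{2}k_N$ is a legitimate expansion of a step the paper leaves implicit, not a different argument.
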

\begin{proof}
Since the metric $g^{TN}$ is fibrewise K\"{a}hler, 
up to the constant $\sqrt{2}$, 
the operator  $D^E_N$ is 
the standard $\text{spin}^c$-Dirac operator along the fiber $N$. 
As we saw in \textsection \ref{dilab-subsec-vertical-metrics}, 
the connection $\n^{T_{\R}N}$ 
induced by $\n^{TN}$ is exactly the connection that was considered in 
\cite{b}. Finally, since our fibration is flat, the term in the 
Levi-Civita superconnection that contains the curvature of our 
fibration vanishes identically.
This completes the proof. 
\end{proof}

For $t>0$, 
let $C^\mathscr{E}_t,D^\mathscr{E}_t\index{CEmathscrt@$C^\mathscr{E}_t$}\index{DEmathscrt@$D^\mathscr{E}_t$}$ be $C^\mathscr{E},D^\mathscr{E}$ associated with the rescaled metric 
$g^{TN}/t$. 
By \eqref{dilab-eq-lc-superconnection}, we have
\begin{equation}
\label{dilab-eq-lc-superconnection-rescaling}
C^\mathscr{E}_t =  t\overline{\partial}^{E,*}_N + \overline{\partial}^E_N + \n^{\mathscr{E},\mathrm{u}} \;,\hspace{5mm}
D^\mathscr{E}_t =  t\overline{\partial}^{E,*}_N - \overline{\partial}^E_N + \frac{1}{2}\omega^\mathscr{E} \;.
\end{equation}

\subsection{The index bundle and its characteristic classes}
\label{dilab-subsec-index-bundle}

\

Let $H^\cdot(N,E_0)$ be the Dolbeault cohomology of 
$E_{0}$.
The action of $G$ on $E_0$ induces an action of $G$ on $H^\cdot(N,E_0)$. Set
\begin{equation}
H^\cdot(N,E) = P_G \times_G H^\cdot(N,E_0) \index{HNE@$H^\cdot(N,E)$}\;.
\end{equation}

Let $\nabla^{H^\cdot(N,E)} \index{nablaHNE@$\nabla^{H^\cdot(N,E)}$}$ be the flat connection on $H^\cdot(N,E)$ induced by the flat connection on $P_G$.
For $s\in\mathscr{C}^\infty(M,\mathscr{E})$ satisfying 
$\overline{\partial}^E_N s  = 0$,  
let 
\begin{equation}
[s]\in\mathscr{C}^\infty(M,H^\cdot(N,E))
\end{equation} 
be the corresponding cohomology class. Then
\begin{equation}
\label{dilab-def-flat-nabla-H}
\nabla^{H^\cdot(N,E)}[s] = [\n^\mathscr{E} s] \in \Omega^1(M,H^\cdot(N,E))\;.
\end{equation}

By Hodge theory, there is a canonical identification
\begin{equation}
\label{dilab-eq-hodge-id}
H^\cdot(N,E) \simeq \mathrm{ker} D^E_N \subseteq \mathscr{E} \;.
\end{equation}
Let $g^{H^\cdot(N,E)}$ be the metric on $H^\cdot(N,E)$ induced by 
$g^\mathscr{E}$ via the identification \eqref{dilab-eq-hodge-id}.

Let $\nabla^{H^\cdot(N,E),*} \index{nablaHNEad@$\nabla^{H^\cdot(N,E),*}$}$ be the adjoint connection of $\nabla^{H^\cdot(N,E)}$ with respect to $g^{H^\cdot(N,E)}$. 
Set
\begin{align}
\begin{split}
\nabla^{H^\cdot(N,E),\mathrm{u}} = \; & \frac{1}{2}\big(\nabla^{H^\cdot(N,E),*} + \nabla^{H^\cdot(N,E)} \big)  \index{nablaHNEu@$\nabla^{H^\cdot(N,E),\mathrm{u}}$} \;, \\
\omega^{H^\cdot(N,E)} = \; & \nabla^{H^\cdot(N,E),*} - \nabla^{H^\cdot(N,E)}   \index{omegaHNE@$\omega^{H^\cdot(N,E)}$} \;.
\end{split}
\end{align}
Then $\nabla^{H^\cdot(N,E),\mathrm{u}}$ is a unitary connection 
and $\omega^{H^\cdot(N,E)} \in \smooth\big(M,\mathrm{End}(H^\cdot(N,E))\big)$.

Put
\begin{equation}
\chi(N,E) = \sum_p (-1)^p \dim H^p(N,E) \index{chiNE@$\chi(N,E)$}\;.
\end{equation}

\begin{prop}
\label{dilab-prop-chern-index-theorem}
For $t>0$, we have
\begin{equation}
\label{dilab-eq-prop-chern-index-theorem}
\varphi\trs\big[\exp(D^{\mathscr{E},2}_t)\big] = \chi(N,E) \;.
\end{equation}
\end{prop}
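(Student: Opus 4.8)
The plan is to recognize $\varphi\trs\big[\exp(D^{\mathscr{E},2}_t)\big]$ as the even Chern character form of the Levi-Civita superconnection. Indeed, by \eqref{dilab-eq-bianchi} we have $D^{\mathscr{E},2}_t=-C^{\mathscr{E},2}_t$, so $\exp(D^{\mathscr{E},2}_t)=\exp(-C^{\mathscr{E},2}_t)$, and the standard heat-kernel estimates of \cite{bgv} guarantee that the supertrace is well defined. I would then decompose the desired identity according to the exterior degree along $M$ and prove separately that the component of degree $0$ equals $\chi(N,E)$ and that every component of positive degree vanishes identically.

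For the degree $0$ component I would invoke \eqref{dilab-eq-lc-superconnection-rescaling}: since $\omega^\mathscr{E}$ has exterior degree $1$ along $M$, the part of $D^\mathscr{E}_t$ of degree $0$ along $M$ is $t\overline{\partial}^{E,*}_N-\overline{\partial}^E_N$, whose square equals $-t\,D_N^{E,2}$. Hence the degree $0$ component of $\exp(D^{\mathscr{E},2}_t)$ is the fiberwise heat operator $\exp\big(-t\,D_N^{E,2}\big)$, and the McKean--Singer formula combined with the Hodge identification \eqref{dilab-eq-hodge-id} gives
\begin{equation}
\trs\big[\exp\big(-t\,D_N^{E,2}\big)\big]=\sum_q(-1)^q\dim H^q(N,E)=\chi(N,E)\;,
\end{equation}
independently of $t$. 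As $\varphi$ acts as the identity in degree $0$, this produces the constant term.

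The heart of the matter, and where I expect the main obstacle to lie, is the pointwise vanishing of the positive-degree components. The decisive structural feature visible in \eqref{dilab-eq-lc-superconnection-rescaling} is that $D^\mathscr{E}_t$ carries \emph{no} connection term: it is the sum of the fiberwise operators $t\overline{\partial}^{E,*}_N-\overline{\partial}^E_N$ and of the self-adjoint $1$-form $\tfrac12\omega^\mathscr{E}$ recording the horizontal variation of the metric. This is precisely the infinite-dimensional analogue of the situation in the finite-dimensional odd Chern--Weil theory, where for a flat bundle the even characteristic forms of the unitary connection collapse to constants by virtue of the identity $\tr\big[(\omega^F)^{2k}\big]=0$ for $k\geq 1$ recorded in \cite[Proposition 1.3]{bl}. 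Using the flatness $A^{\mathscr{E},2}=A^{\mathscr{E},*,2}=0$, equivalently $\big[C^\mathscr{E},D^\mathscr{E}\big]=0$, I would prove the corresponding supertrace identity at the level of superconnections, namely that the positive-degree terms produced by a Duhamel expansion of $\exp(D^{\mathscr{E},2}_t)$ have vanishing supertrace; here each insertion of $\tfrac12\big[\,t\overline{\partial}^{E,*}_N-\overline{\partial}^E_N\,,\,\omega^\mathscr{E}\,\big]+\tfrac14\omega^{\mathscr{E},2}$ raises the degree along $M$ while being regularized by the fiberwise heat operator $\exp\big(-t\,D_N^{E,2}\big)$.

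The delicate point is to make this algebraic cancellation rigorous in infinite dimensions: one must control the Duhamel series and justify the interchange of supertrace and time integration by means of the trace-class bounds of \cite{bgv}, so that the flat-structure identity can be applied term by term. As a consistency check on the value of the constant I would examine the limit $t\to\infty$, in which the operator concentrates on $\ker D_N^E\simeq H^\cdot(N,E)$ and the induced structure is controlled by the flat connection $\nabla^{H^\cdot(N,E)}$; the finite-dimensional identity $\tr\big[(\omega^{H^\cdot(N,E)})^{2k}\big]=0$ then shows that the limiting form is the pointwise constant $\chi(N,E)$, in agreement with the direct computation.
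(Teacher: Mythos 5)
Your degree-zero computation is correct, and so is the observation that the whole statement reduces to the vanishing of the positive-degree components of $\varphi\trs\big[\exp(D^{\mathscr{E},2}_t)\big]$. The gap is in the mechanism you propose for that vanishing. There is no term-by-term cancellation in the Duhamel expansion: already in degree $2$ one gets, after using cyclicity, contributions such as $\trs\big[\tfrac14\omega^{\mathscr{E},2}\exp(-tD^{E,2}_N)\big]$ together with the double-insertion terms involving $\big[t\overline{\partial}^{E,*}_N-\overline{\partial}^E_N,\tfrac12\omega^\mathscr{E}\big]$, and none of these vanishes individually; the finite-dimensional identity $\tr\big[(\omega^F)^{2k}\big]=0$ has no direct analogue here because the heat operator $\exp(-tD^{E,2}_N)$ sits between the insertions and does not commute with them. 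More fundamentally, the vanishing is not a pointwise algebraic fact on $\mathcal{N}$ at all: in the finite-dimensional model (Proposition \ref{dilab-prop-annulation-curvature}) the positive-degree components of $P(-A^{E,2})$ do \emph{not} vanish pointwise — only their fiber integrals do, by a Chern--Weil transgression on the fiber $N$ followed by Stokes. Any proof must let this fiberwise Stokes argument enter somewhere, and your Duhamel scheme, which only invokes the formal flatness $\big[C^\mathscr{E},D^\mathscr{E}\big]=0$, never does.

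The paper's route is: (i) $\partial_t\trs\big[\exp(D^{\mathscr{E},2}_t)\big]=0$ because the derivative is the supertrace of a supercommutator with $D^\mathscr{E}_t$; (ii) the local families index theorem identifies the $t\to 0$ limit as $q_*\big[\mathrm{Td}(TN,\nabla^{TN})\mathrm{ch}(E,\nabla^E)\big]$; (iii) Theorem \ref{dilab-thm-gen-flat2trivialclass} (which is where the fiberwise Chern--Weil/Stokes argument lives) shows this fiber integral is a constant function, and Riemann--Roch--Hirzebruch identifies the constant as $\chi(N,E)$. What you relegate to a ``consistency check'' --- the limit $t\to\infty$, concentration on $\Ker D^E_N\simeq H^\cdot(N,E)$, and the Bismut--Lott identity $\tr\big[(\omega^{H^\cdot(N,E)})^{2k}\big]=0$ --- would, combined with step (i), constitute a legitimate alternative proof; but then the large-time convergence of the superconnection heat kernel becomes the load-bearing step, and it is itself a substantial result (it is the content of the first half of Theorem \ref{dilab-prop-large-small-time-convergence}, proved only later by the methods of \cite[Theorem 3.16]{bl}), so it cannot be waved through as a check. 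To repair your write-up, either promote that $t\to\infty$ argument to the main proof and supply the convergence, or follow the paper and compute the $t\to 0$ limit by the local index theorem and invoke Theorem \ref{dilab-thm-gen-flat2trivialclass}.
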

\begin{proof}
By the  local families index theorem \cite{b}, as $t\rightarrow 0$,
\begin{equation}
\label{dilab-eq-1-proof-prop-chern-index-theorem}
\varphi\trs\big[\exp(D^{\mathscr{E},2}_t)\big] = 
q_*\big[\mathrm{Td}(TN, \nabla^{TN})\mathrm{ch}(E, \nabla^E)\big] + \mathscr{O}(\sqrt{t}) \;.
\end{equation}
Furthermore,
\begin{align}
\label{dilab-eq-2-proof-prop-chern-index-theorem}
\begin{split}
\frac{\partial}{\partial t}\trs\big[\exp(D^{\mathscr{E},2}_t)\big] 
= \; & \trs\big[\big[D^\mathscr{E}_t,\frac{\partial}{\partial t}D^\mathscr{E}_t\big]\exp(D^{\mathscr{E},2}_t)\big] \\
= \; & \trs\big[\big[D^\mathscr{E}_t,(\frac{\partial}{\partial t}D^\mathscr{E}_t)\exp(D^{\mathscr{E},2}_t)\big]\big] = 0 \;.
\end{split}
\end{align}
By Proposition \ref{dilab-thm-gen-flat2trivialclass} 
and the Riemann-Roch-Hirzebruch formula, 
we have
\begin{equation}
\label{dilab-eq-3-proof-prop-chern-index-theorem}
q_*\big[\mathrm{Td}(TN, \nabla^{TN})\mathrm{ch}(E, \nabla^E)\big] = \chi(N,E) \;.
\end{equation}
Then \eqref{dilab-eq-prop-chern-index-theorem} follows from 
\eqref{dilab-eq-1-proof-prop-chern-index-theorem}-\eqref{dilab-eq-3-proof-prop-chern-index-theorem}.
\end{proof}

\subsection{A Riemann-Roch-Grothendieck formula}
\label{dilab-subsec-riemann-roch-grothendieck}

\

For  $t>0$, set
\begin{align}
\label{dilab-eq-def-alphatbetat}
\begin{split}
\alpha_t & = \sqrt{2\pi i}\varphi\trs\Big[D^\mathscr{E}_t\exp(D^{\mathscr{E},2}_t)\Big] 
\in\Omega^\mathrm{odd}(M)
\index{a lphat@$\alpha_t$}\;,\\
\beta_t & = \varphi\trs\Big[ \frac{N^{\Lambda^\cdot(\overline{T^*N})}}{2}(1+2D^{\mathscr{E},2}_t) \exp(D^{\mathscr{E},2}_t) \Big] 
\in\Omega^\mathrm{even}(M)
\index{b etat@$\beta_t$}\;.
\end{split}
\end{align}

\begin{prop}
\label{dilab-prop-constant-class}
For  $t>0$, the differential form $\alpha_t$ is closed. 
Its cohomology class is independent of  $g^{TN}$, $g^{E}$ and $t$.
\end{prop}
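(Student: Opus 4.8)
The plan is to establish closedness and metric/parameter independence of $\alpha_t$ by the standard Bismut-Lott superconnection transgression argument, leveraging the identities for $D^\mathscr{E}_t$ and the supertrace that are already available in the excerpt. The key structural inputs are the Bianchi-type identity $[C^\mathscr{E},D^\mathscr{E}]=0$ together with $C^{\mathscr{E},2}=-D^{\mathscr{E},2}$ from \eqref{dilab-eq-bianchi}, the vanishing $\trs[[a,b]]=0$ from \eqref{dilab-eq-trs-commutator-zero}, and the fact that the action of $d_M$ on forms of the type $\trs[\cdots]$ amounts to a supercommutator with the superconnection.

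\emph{Closedness.} First I would compute $d_M\alpha_t$. Since $d_M$ acts on $\Omega^\cdot(M,\mathrm{End}(\mathscr{E}))$-valued supertraces by the supercommutator with the unitary superconnection $C^\mathscr{E}_t$ (this is the content of the transgression formalism of \cite[\textsection 2]{bl}), we have
\begin{equation}
d_M\,\trs\Big[D^\mathscr{E}_t\exp(D^{\mathscr{E},2}_t)\Big]
= \trs\Big[\big[C^\mathscr{E}_t,\,D^\mathscr{E}_t\exp(D^{\mathscr{E},2}_t)\big]\Big] \;.
\end{equation}
Because $[C^\mathscr{E}_t,D^\mathscr{E}_t]=0$ and $[C^\mathscr{E}_t,D^{\mathscr{E},2}_t]=0$ by \eqref{dilab-eq-bianchi}, the supercommutator inside vanishes, and then the right-hand side is zero by \eqref{dilab-eq-trs-commutator-zero}. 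The normalization operator $\varphi$ and the scalar $\sqrt{2\pi i}$ commute with $d_M$ up to the usual bookkeeping of the number operator $N^{\Lambda^\cdot(T^*M)}$, so $d_M\alpha_t=0$.

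\emph{Independence of $t$.} Next I would differentiate in $t$. Using $\frac{\partial}{\partial t}\exp(D^{\mathscr{E},2}_t) = \big[D^\mathscr{E}_t,\frac{\partial}{\partial t}D^\mathscr{E}_t\big]\cdots$ handled via Duhamel, together with $\frac{\partial}{\partial t}\trs[D^\mathscr{E}_t\exp(D^{\mathscr{E},2}_t)]$, the expression should reorganize into the supertrace of a supercommutator with $C^\mathscr{E}_t$, hence an exact form; more precisely I expect
\begin{equation}
\frac{\partial}{\partial t}\alpha_t = d_M\big(\text{something}\big) \;,
\end{equation}
so that the cohomology class $[\alpha_t]$ is $t$-independent. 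This is exactly the mechanism behind \eqref{dilab-eq-intro-alpha-t-beta-t}, where $\beta_t$ supplies the primitive.

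\emph{Independence of the metrics.} For the dependence on $g^{TN}$ and $g^E$, I would invoke the functoriality/variational argument of Bismut-Lott \cite[\textsection 1.5, \textsection 2]{bl}: a variation of either metric changes $C^\mathscr{E},D^\mathscr{E}$ by terms that again reorganize, via \eqref{dilab-eq-bianchi} and \eqref{dilab-eq-trs-commutator-zero}, into $d_M$-exact forms. \textbf{The main obstacle} is the careful Duhamel bookkeeping in the $t$- and metric-variations: one must track the non-commuting pieces of $\exp(D^{\mathscr{E},2}_t)$ and verify that after taking the supertrace every residual term is a genuine supercommutator, so that the cyclicity identity \eqref{dilab-eq-trs-commutator-zero} applies. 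Since all three invariances follow the same template already codified in \cite[\textsection 2]{bl}, the proof reduces to checking that our flat superconnection $A^\mathscr{E}$ and its adjoint satisfy the hypotheses of that formalism, which is guaranteed by \eqref{dilab-eq-Asquare=0}, \eqref{dilab-eq-Astarsquare=0}, and Proposition \ref{dilab-prop-lc-superconnection}.
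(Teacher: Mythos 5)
Your argument for closedness and for metric independence is exactly the paper's: $d_M$ of the supertrace is the supertrace of the supercommutator with $C^\mathscr{E}_t$, which vanishes by \eqref{dilab-eq-bianchi} and \eqref{dilab-eq-trs-commutator-zero}, and the metric independence is disposed of by functoriality as in \cite[\textsection 1.5]{bgv}. The one place you diverge is the $t$-independence: you propose a separate Duhamel computation to exhibit $\frac{\partial}{\partial t}\alpha_t$ as $d_M$ of a primitive (essentially re-deriving $\frac{\partial}{\partial t}\alpha_t = \frac{1}{t}d_M\beta_t$, which is the content of Proposition \ref{dilab-prop-trans-alpha-beta}, proved afterwards in the paper via the auxiliary fibration over $M\times\mathbb{R}_+$). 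The paper avoids this entirely by observing that $D^\mathscr{E}_t$ is, by definition, the operator $D^\mathscr{E}$ attached to the rescaled metric $g^{TN}/t$, so $t$-independence of $[\alpha_t]$ is literally a special case of independence of $g^{TN}$ --- no Duhamel bookkeeping needed. Your route is correct but does extra work; if you do carry it out, note that the paper's own proof of the transgression formula invokes the closedness statement you are in the middle of proving, so you should perform the Duhamel computation directly rather than cite \eqref{dilab-eq-intro-alpha-t-beta-t} as established.
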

\begin{proof}
By \eqref{dilab-eq-bianchi}, we have
\begin{equation}
d_M\sqrt{2\pi i}\varphi\trs\big[D^\mathscr{E}_t\exp(D^{\mathscr{E},2}_t)\big] = 
\varphi\trs\big[\big[C^\mathscr{E}_t,D^\mathscr{E}_t\exp(D^{\mathscr{E},2}_t)\big]\big] = 0 \;,
\end{equation}
which proves the closeness. 
Then, by the functoriality of our constructions, 
$[\alpha_t]\in H^\cdot(M)$ is independebt of the metrics. 
In particular, it is independent of $t$.
\end{proof}

\begin{prop}
\label{dilab-prop-trans-alpha-beta}
For $t>0$, the following identity holds:
\begin{equation}
\label{dilab-eq-prop-trans-alpha-beta}
\frac{\partial}{\partial t}\alpha_t = \frac{1}{t}d_M \beta_t \;.
\end{equation}
\end{prop}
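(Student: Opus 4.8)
The plan is to compute $\frac{\partial}{\partial t}\alpha_t$ and $d_M\beta_t$ separately and then to recognize their difference as the supertrace of a supercommutator of the form $[D^\mathscr{E}_t,\,\cdot\,]$, which vanishes by \eqref{dilab-eq-trs-commutator-zero}. Throughout I abbreviate $M=D^{\mathscr{E},2}_t$, $\Phi=\exp(M)$ and $N_V=N^{\Lambda^\cdot(\overline{T^*N})}$. The two structural inputs are \eqref{dilab-eq-lc-superconnection-rescaling}, which gives $\frac{\partial}{\partial t}D^\mathscr{E}_t=\overline{\partial}^{E,*}_N$ (since $\omega^\mathscr{E}$ does not depend on $t$), and \eqref{dilab-eq-bianchi}, which gives $[C^\mathscr{E}_t,D^\mathscr{E}_t]=0$; in particular $D^\mathscr{E}_t$, $M$ and $\Phi$ all commute with $C^\mathscr{E}_t$ and with one another.

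First I would differentiate $\alpha_t$. Expanding $\frac{\partial}{\partial t}\Phi$ by Duhamel's formula and using the cyclicity of $\trs$ together with $[D^\mathscr{E}_t,\Phi]=0$, the resulting terms $\trs[\overline{\partial}^{E,*}_N\Phi]$ and $2\trs[\overline{\partial}^{E,*}_N M\Phi]$ combine into
\[
\frac{\partial}{\partial t}\alpha_t = \sqrt{2\pi i}\,\varphi\,\trs\big[\overline{\partial}^{E,*}_N(1+2M)\Phi\big].
\]
Next I would treat $d_M\beta_t$. Since $C^\mathscr{E}_t$ is a superconnection, whose degree-one part is the connection $\n^{\mathscr{E},\mathrm{u}}$, we have $d_M\trs[\mu]=\trs[[C^\mathscr{E}_t,\mu]]$ for $\mu\in\Omega^\cdot(M,\End(\mathscr{E}))$, and moreover $d_M\varphi=\sqrt{2\pi i}\,\varphi\,d_M$. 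As $C^\mathscr{E}_t$ commutes with $(1+2M)\Phi$, the supercommutator falls only on $N_V$, so that
\[
d_M\beta_t = \tfrac12\sqrt{2\pi i}\,\varphi\,\trs\big[[C^\mathscr{E}_t,N_V](1+2M)\Phi\big].
\]
A short computation with the vertical number operator, using that $\overline{\partial}^E_N$ raises and $\overline{\partial}^{E,*}_N$ lowers the $\Lambda^\cdot(\overline{T^*N})$-degree by one while $\n^{\mathscr{E},\mathrm{u}}$ preserves it, gives $[C^\mathscr{E}_t,N_V]=t\overline{\partial}^{E,*}_N-\overline{\partial}^E_N$.

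Subtracting these two expressions, the discrepancy becomes
\[
\frac{\partial}{\partial t}\alpha_t - \frac1t d_M\beta_t = \frac{\sqrt{2\pi i}}{2t}\,\varphi\,\trs\big[(t\overline{\partial}^{E,*}_N+\overline{\partial}^E_N)(1+2M)\Phi\big].
\]
The hard part will be to see that the right-hand side vanishes, and this is where the mechanism of the proof lies: a naive parity count only shows that this supertrace is supported in odd form degrees, not that it is zero. The same number-operator computation as above yields $t\overline{\partial}^{E,*}_N+\overline{\partial}^E_N=[D^\mathscr{E}_t,N_V]$, and since $D^\mathscr{E}_t$ commutes with $(1+2M)\Phi$ we may rewrite
\[
(t\overline{\partial}^{E,*}_N+\overline{\partial}^E_N)(1+2M)\Phi = \big[D^\mathscr{E}_t,\, N_V(1+2M)\Phi\big].
\]
Unlike $C^\mathscr{E}_t$, the operator $D^\mathscr{E}_t$ is a genuine element of $\Omega^\cdot(M,\End(\mathscr{E}))$ — its degree-one component is the endomorphism-valued form $\tfrac12\omega^\mathscr{E}$ rather than a connection — so \eqref{dilab-eq-trs-commutator-zero} applies and the supertrace of this supercommutator is zero. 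This establishes \eqref{dilab-eq-prop-trans-alpha-beta} and completes the proof.
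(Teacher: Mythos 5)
Your proof is correct, but it takes a genuinely different route from the paper. The paper does not differentiate $\alpha_t$ directly: it forms the fibration over $M_+=M\times\R_+$, computes $D^{\mathscr{E}_+}=D^{\mathscr{E}}_t+\frac{1}{2t}dt\wedge\big(N^{\Lambda^\cdot(\overline{T^*N})}-n\big)$, identifies $\sqrt{2\pi i}\varphi\trs\big[D^{\mathscr{E}_+}\exp(D^{\mathscr{E}_+,2})\big]$ with $\alpha_t$ plus a $dt\wedge\beta_t$ term plus a constant multiple of $\frac{dt}{t}$, and then reads off \eqref{dilab-eq-prop-trans-alpha-beta} from the $dt$-component of the closedness of this form on $M_+$ (Proposition \ref{dilab-prop-constant-class} applied to the enlarged base). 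Your argument instead computes $\frac{\partial}{\partial t}\alpha_t$ by Duhamel, computes $d_M\beta_t$ via $d_M\trs[\mu]=\trs\big[[C^{\mathscr{E}}_t,\mu]\big]$, and exhibits the discrepancy as $\trs\big[[D^{\mathscr{E}}_t,N^{\Lambda^\cdot(\overline{T^*N})}(1+2D^{\mathscr{E},2}_t)\exp(D^{\mathscr{E},2}_t)]\big]$; the underlying mechanism (everything reduces to $[C^{\mathscr{E}}_t,N^{\Lambda^\cdot(\overline{T^*N})}]=t\overline{\partial}^{E,*}_N-\overline{\partial}^E_N$, $[D^{\mathscr{E}}_t,N^{\Lambda^\cdot(\overline{T^*N})}]=t\overline{\partial}^{E,*}_N+\overline{\partial}^E_N$, and vanishing of supertraces of supercommutators of fiberwise operators with smoothing operators) is the same, but your presentation is more elementary and self-contained, while the paper's $M_+$-device automatically manages the signs and produces as a by-product the identity \eqref{dilab-proof-prop-a-bar-a-formula-second-eq-1}-type data reused in \textsection \ref{dilab-subsec-intermediate-rrg}. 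Two small points you should make explicit: (i) the computation of $[C^{\mathscr{E}}_t,N^{\Lambda^\cdot(\overline{T^*N})}]$ and $[D^{\mathscr{E}}_t,N^{\Lambda^\cdot(\overline{T^*N})}]$ also requires $[\omega^{\mathscr{E}},N^{\Lambda^\cdot(\overline{T^*N})}]=0$, which holds by \eqref{dilab-eq-omega-big-e} since each of $\omega^{\Lambda^\cdot(\overline{T^*N})}$, $\omega^E$, $k_N$ preserves the antiholomorphic degree; (ii) the final appeal to \eqref{dilab-eq-trs-commutator-zero} is in an infinite-dimensional setting, so it should be justified as the standard vanishing of the supertrace of the supercommutator of a fiberwise differential operator with a fiberwise smoothing operator, exactly as the paper itself does in \eqref{dilab-eq-2-proof-prop-chern-index-theorem}. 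Neither point is a gap.
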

\begin{proof}
Set
\begin{equation}
\mathcal{N}_+ = \mathcal{N}\times\mathbb{R}_+ \;,\hspace{5mm} M_+ = M\times\mathbb{R}_+ \index{Nmathcal+@$\mathcal{N}_+$}\index{M+@$M_+$}\;.
\end{equation}
Let
\begin{equation}
q_+ = q \oplus \mathrm{id}_{\mathbb{R}_+} : \mathcal{N}_+\rightarrow M_+
\end{equation}
be the obvious projection. 
Let $t$ be the coordinate on $\mathbb{R}_+$. 

We equip $TN$ with the metric $\frac{1}{t}g^{TN}$. Let $\mathscr{E}_+\index{Emathscr+@$\mathscr{E}_+$}$, 
$\omega^{\mathscr{E}_+}$, $C^{\mathscr{E}_+}$, $D^{\mathscr{E}_+}$ be the corresponding objects associated with the new fibration. 
The following identities hold 
(cf. \eqref{dilab-eq-def-omega-mathscrE})
\begin{align}
\label{dilab-proof-prop-trans-alpha-beta-eq-0}
\begin{split}
d_{M_+} & = d_M + dt\wedge\frac{\partial}{\partial t} \;,\\
\omega^{\mathscr{E}_+} & = \omega^{\mathscr{E}} + \frac{1}{t}dt\wedge\big(N^{\Lambda^\cdot(\overline{T^*N})}-n\big) \;.
\end{split}
\end{align}
Then, by \eqref{dilab-eq-lc-superconnection} and \eqref{dilab-eq-lc-superconnection-rescaling}, we get
\begin{align}
\begin{split}
C^{\mathscr{E}_+} & = C^{\mathscr{E}}_t + dt\wedge\frac{\partial}{\partial t} + \frac{1}{2t}dt\wedge\big(N^{\Lambda^\cdot(\overline{T^*N})}-n\big) \;,\\
D^{\mathscr{E}_+} & = D^{\mathscr{E}}_t + \frac{1}{2t}dt\wedge\big(N^{\Lambda^\cdot(\overline{T^*N})}-n\big) \;.
\end{split}
\end{align}
Thus
\begin{align}
\label{dilab-proof-prop-trans-alpha-beta-eq-1}
\begin{split}
& \sqrt{2\pi i}\varphi\trs\big[D^{\mathscr{E}_+}\exp(D^{\mathscr{E}_+,2})\big] \\
= \; & \sqrt{2\pi i}\varphi\trs\big[D^{\mathscr{E}}\exp(D^{\mathscr{E},2})\big] 
+ \frac{1}{2t}dt\wedge\varphi\trs\big[\big(N^{\Lambda^\cdot(\overline{T^*N})}-n\big)\exp(D^{\mathscr{E},2})\big] \\
& + \sqrt{2\pi i}\varphi\trs\Big[D^{\mathscr{E}}\exp\big(D^{\mathscr{E},2}
+\big[D^{\mathscr{E}},\frac{1}{2t}dt\wedge N^{\Lambda^\cdot(\overline{T^*N})}\big]\big)\Big] \\
= \; & \alpha_t  + \frac{1}{2t}dt\wedge\varphi\trs\big[N^{\Lambda^\cdot(\overline{T^*N})}\exp(D^{\mathscr{E},2})\big] - \chi(N,E)\frac{n}{2t}dt \\
& + \sqrt{2\pi i}\varphi\trs\Big[D^{\mathscr{E}}\big[D^{\mathscr{E}},
\exp\big(D^{\mathscr{E},2}+ \frac{1}{2t}dt\wedge N^{\Lambda^\cdot(\overline{T^*N})}\big)\big]\Big] \\
= \; & \alpha_t  + \frac{1}{2t}dt\wedge\varphi\trs\big[N^{\Lambda^\cdot(\overline{T^*N})}\exp(D^{\mathscr{E},2})\big] - \chi(N,E)\frac{n}{2t}dt \\
& + \sqrt{2\pi i}\varphi\trs\Big[\big[D^{\mathscr{E}},D^{\mathscr{E}}\big]
\exp\big(D^{\mathscr{E},2}+ \frac{1}{2t}dt\wedge N^{\Lambda^\cdot(\overline{T^*N})}\big)\Big] \\
= \; & \alpha_t +\frac{1}{2t}dt\wedge\beta_t - \chi(N,E)\frac{n}{2t}dt \in \Omega^\cdot(M_+) \;.
\end{split}
\end{align}

By Proposition \ref{dilab-prop-constant-class}, we have
\begin{equation}
\label{dilab-proof-prop-trans-alpha-beta-eq-2}
d_{M_+}\sqrt{2\pi i}\varphi\trs\big[D^{\mathscr{E}_+}\exp(D^{\mathscr{E}_+,2})\big] = 0 \;.
\end{equation}

By \eqref{dilab-proof-prop-trans-alpha-beta-eq-0}, 
\eqref{dilab-proof-prop-trans-alpha-beta-eq-1} and 
\eqref{dilab-proof-prop-trans-alpha-beta-eq-2}, 
we get \eqref{dilab-eq-prop-trans-alpha-beta}.
\end{proof}

Set $f(x)=xe^{x^2}$.
Following \cite[Definition 1.7]{bl}, 
we define the odd characteristic form
\begin{equation}
f(H^\cdot(N,E), \nabla^{H^\cdot(N,E)}, g^{H^\cdot(N,E)}) = 
\sqrt{2\pi i}\varphi\trs\big[ f(\omega^{H^\cdot(N,E)}/2) \big] 
\in\Omega^\mathrm{odd}(M) \;.
\end{equation}

Put
\begin{equation}
\chi'(N,E) = \sum_p(-1)^pp\dim H^p(N,E) \index{chiNEprim@$\chi'(N,E)$}\;.
\end{equation}

Now we state the central result in this section. 
Its proof will be delayed to \textsection \ref{dilab-subsec-intermediate-rrg}.

\begin{thm}
\label{dilab-prop-large-small-time-convergence}
As $t\rightarrow + \infty$,
\begin{align}
\begin{split}
\label{dilab-eq-large-time-convergence}
\alpha_t & = f(H^\cdot(N,E), \nabla^{H^\cdot(N,E)}, g^{H^\cdot(N,E)})+\mathscr{O}\big(\frac{1}{\sqrt{t}}\big) \;,\\
\beta_t & = \frac{1}{2}{\chi}'(N,E) + \mathscr{O}\big(\frac{1}{\sqrt{t}}\big) \;.
\end{split}
\end{align}

As $t\rightarrow 0$,
\begin{align}
\begin{split}
\label{dilab-eq-small-time-convergence}
\alpha_t = \; & q_*\Big[\widetilde{\mathrm{Td}}(TN, g^{TN})*\widetilde{\mathrm{ch}}(E, g^E) \Big]  \\
\; & + \frac{1}{2t} d_M q_*\Big[\frac{\omega}{2\pi}\mathrm{Td}(TN, \nabla^{TN})\mathrm{ch}(E, \nabla^E)\Big] + \mathscr{O}\big(\sqrt{t}\big) \;,\\
\beta_t = \; & - \frac{1}{2}q_*\Big[\mathrm{Td}'(TN, \nabla^{TN})\mathrm{ch}(E, \nabla^E)\Big] + \frac{n}{2}\chi(N,E) \\
\; & - \frac{1}{2t} q_*\Big[\frac{\omega}{2\pi}\mathrm{Td}(TN, \nabla^{TN})\mathrm{ch}(E, \nabla^E)\Big] + \mathscr{O}\big(\sqrt{t}\big) \;.
\end{split}
\end{align}
\end{thm}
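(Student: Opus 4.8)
The plan is to treat the two regimes by different methods. For the large-time limit I would use the Hodge identification \eqref{dilab-eq-hodge-id}, $\ker D^E_N\simeq H^\cdot(N,E)$. On each fiber the nonzero spectrum of $D^{E,2}_N$ is bounded below by a positive constant, so after replacing $g^{TN}$ by $g^{TN}/t$ the contribution of the positive eigenvalues to the supertraces in \eqref{dilab-eq-def-alphatbetat} decays like $\mathscr{O}(1/\sqrt t)$. Projecting onto the kernel, the rescaled $D^\mathscr{E}_t$ converges to the finite-dimensional operator $\tfrac12\omega^{H^\cdot(N,E)}$ on $H^\cdot(N,E)$, while $N^{\Lambda^\cdot(\overline{T^*N})}$ restricts to the grading by cohomological degree. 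Inserting these limits into \eqref{dilab-eq-def-alphatbetat} and using $f(x)=xe^{x^2}$ yields $f\big(H^\cdot(N,E),\nabla^{H^\cdot(N,E)},g^{H^\cdot(N,E)}\big)$ as the limit of $\alpha_t$ and $\tfrac12\chi'(N,E)$ as the limit of $\beta_t$.

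The small-time asymptotics is the core of the argument, and I would follow the local index theoretic method of Bismut-Gillet-Soul\'e and Bismut-K\"ohler. The first step is a Lichnerowicz formula for $D^{\mathscr{E},2}_t$, writing it in terms of a fiberwise Bochner Laplacian, the curvatures of $\nabla^{TN}$ and $\nabla^E$, and zeroth-order terms built from $\omega^\mathscr{E}$. To accommodate the odd factor $D^\mathscr{E}_t$ in $\alpha_t$ and the weight $N^{\Lambda^\cdot(\overline{T^*N})}$ in $\beta_t$, I would introduce auxiliary anticommuting variables $da,d\bar a$, so that both $\alpha_t$ and $\beta_t$ become suitable $(a,\bar a)$-components of a single supertrace $\trs\big[\exp(\mathbb{B}^2_t)\big]$, where $\mathbb{B}_t$ is the enlarged operator obtained by coupling $D^\mathscr{E}_t$ and $N^{\Lambda^\cdot(\overline{T^*N})}$ to $da,d\bar a$; cyclicity of the supertrace reduces the linear terms to the required form.

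With this in place, the second step is Getzler rescaling of the Clifford variables together with $t$ and with $da,d\bar a$. The rescaled operator converges to a fiberwise harmonic oscillator, and Mehler's formula computes its heat kernel fiber by fiber; the resulting Gaussian integral reproduces $\mathrm{Td}(TN,\nabla^{TN})\mathrm{ch}(E,\nabla^E)$, while the $\omega^\mathscr{E}$-terms carried by $da,d\bar a$ turn these into the transgressed forms and assemble them through the product $*$ into $q_*\big[\widetilde{\mathrm{Td}}(TN,g^{TN})*\widetilde{\mathrm{ch}}(E,g^E)\big]$. For $\beta_t$, the insertion of $N^{\Lambda^\cdot(\overline{T^*N})}$ differentiates the Todd genus in the fiber direction and produces $-\tfrac12 q_*\big[\mathrm{Td}'(TN,\nabla^{TN})\mathrm{ch}(E,\nabla^E)\big]$, the constant $\tfrac n2\chi(N,E)$ coming from the shift by $n$ already visible in \eqref{dilab-proof-prop-trans-alpha-beta-eq-0}.

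The main obstacle will be the identification of the divergent $\tfrac1{2t}$-terms, which do not come from the leading Mehler computation but from the first subleading order of the rescaling, carried by the $k_N$-part of $\omega^\mathscr{E}$ and by the factor $N^{\Lambda^\cdot(\overline{T^*N})}-n$ entering through the rescaled volume form in \eqref{dilab-proof-prop-trans-alpha-beta-eq-0}. I would isolate these by retaining the order-$t^{-1}$ term of the expansion, and then use the transgression identity \eqref{dilab-eq-prop-trans-alpha-beta} as a consistency check: it forces the $1/t$-coefficient of $\alpha_t$ to be $d_M$-exact and equal to $\tfrac12 d_M q_*\big[\tfrac{\omega}{2\pi}\mathrm{Td}(TN,\nabla^{TN})\mathrm{ch}(E,\nabla^E)\big]$, matching the divergent part of $\beta_t$ in \eqref{dilab-eq-small-time-convergence}. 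Together the two limits yield the theorem, and hence, after integrating \eqref{dilab-eq-prop-trans-alpha-beta} against $dt/t$ and comparing the two ends, Theorem \ref{dilab-intro-thm-riemann-roch-grothendieck}.
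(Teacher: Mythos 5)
Your overall strategy coincides with the paper's: the large-time limit is the Bismut--Lott spectral-gap argument (the paper simply invokes \cite[Theorem 3.16]{bl}), and the small-time limit is handled exactly as you propose, by packaging $\alpha_t$ and $\beta_t$ into Grassmann components of a single supertrace in $da,d\bar a,\epsilon$ (Lemmas \ref{dilab-prop-a-bar-a-formula} and \ref{dilab-prop-a-bar-a-formula-second}), writing a Lichnerowicz formula for the enlarged curvature (Proposition \ref{dilab-prop-lichnerowicz-a-bar-a}), and then running the Bismut--Gillet--Soul\'e/Bismut--K\"ohler rescaling to produce $q_*\big[\widetilde{\mathrm{Td}}(TN,g^{TN})*\widetilde{\mathrm{ch}}(E,g^E)\big]$ and $-\tfrac12 q_*\big[\mathrm{Td}'\,\mathrm{ch}\big]$. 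So the core of your proposal is the paper's proof.

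The one place where you diverge, and where your sketch is too thin, is the treatment of the exact remainder. The reduction by ``cyclicity'' is not an identity between $\alpha_t$ and the $\epsilon\,da\,d\bar a$-component alone: Lemma \ref{dilab-prop-a-bar-a-formula} produces an extra term $d_M\trs\big[\tfrac12 N^{\Lambda^\cdot(\overline{T^*N})}\exp(D^{\mathscr{E},2}_t)\big]$, whose Laurent expansion $C_{-1}t^{-1}+C_0+\mathscr{O}(\sqrt t)$ carries not only the $\tfrac1{2t}$-divergence but also a potential \emph{constant} contribution $\tfrac12 d_M C_0$ to $\alpha_t$. The transgression identity \eqref{dilab-eq-prop-trans-alpha-beta} cannot see the constant term of $\alpha_t$ at all (it only constrains $\partial_t\alpha_t$), so using it merely as a ``consistency check'' on the $t^{-1}$-coefficients does not finish the proof: you must actually prove $d_M C_0=0$, otherwise the stated constant term $q_*\big[\widetilde{\mathrm{Td}}*\widetilde{\mathrm{ch}}\big]$ is polluted by an unknown exact form. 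Your tool does suffice if pushed further: matching the $t^{-1}$-coefficient of $\partial_t\alpha_t$ (which vanishes) against $\tfrac1t d_M\beta_t$ forces $d_M B_0=0$, and since $B_0=\tfrac12\sum_p(1+p)C_0^{(p)}$ with the summands in distinct degrees, this is equivalent to $d_M C_0=0$. The paper reaches the same conclusion by a different, self-contained route, namely the identity of Lemma \ref{dilab-prop-a-bar-a-formula-second} on $M\times\R_+$ combined with the $dt$-component of the rescaled Mehler asymptotics \eqref{dilab-eq-5-proof-prop-small-time-convergence}. Either way, this step is the actual content behind the divergent and constant terms, and it needs to be carried out explicitly rather than left as a check.
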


\begin{rem}
\label{dilab-rem-small-time-convergence}
By Proposition \ref{dilab-prop-annulation-curvature}, we have
\begin{equation}
q_*\Big[\frac{\omega}{2\pi}\mathrm{Td}(TN, \nabla^{TN})\mathrm{ch}(E, \nabla^E)\Big] 
\in \smooth(M) \;.
\end{equation}
\end{rem}

By Proposition \ref{dilab-prop-constant-class} 
and Theorem \ref{dilab-prop-large-small-time-convergence},
we get the following R.R.G. formula.

\begin{thm}
\label{dilab-thm-riemann-roch-grothendieck}
We have
\begin{align}
\label{dilab-thm-riemann-roch-grothendieck-eq}
\begin{split}
 & \Big[f(H^\cdot(N,E), \nabla^{H^\cdot(N,E)}, g^{H^\cdot(N,E)})\Big] \\
= \; & \Big[q_*\Big[ \widetilde{\mathrm{Td}}(TN, g^{TN})*\widetilde{\mathrm{ch}}(E, g^E) \Big]\Big] \in H^\mathrm{odd}(M,\mathbb{R}) \;.
\end{split}
\end{align}
\end{thm}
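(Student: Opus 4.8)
The plan is to use that the de Rham class of $\alpha_t$ is independent of $t$, and then to read off this single class from the two asymptotic regimes supplied by Theorem \ref{dilab-prop-large-small-time-convergence}: the large-time limit produces the left-hand side of \eqref{dilab-thm-riemann-roch-grothendieck-eq}, and the small-time limit produces the right-hand side.

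First I would invoke Proposition \ref{dilab-prop-constant-class}: for every $t>0$ the form $\alpha_t$ is $d_M$-closed and $[\alpha_t]\in H^\mathrm{odd}(M,\mathbb{R})$ does not depend on $t$ (this is also what one obtains by integrating the transgression identity \eqref{dilab-eq-prop-trans-alpha-beta}). Call this fixed class $c$. Letting $t\to\infty$, the first line of \eqref{dilab-eq-large-time-convergence} gives $\alpha_t=f(H^\cdot(N,E),\nabla^{H^\cdot(N,E)},g^{H^\cdot(N,E)})+\mathscr{O}(1/\sqrt{t})$. Since every $\alpha_t$ represents $c$ and the limit is closed, pairing with an arbitrary smooth cycle and passing to the limit shows that $[f(H^\cdot(N,E),\nabla^{H^\cdot(N,E)},g^{H^\cdot(N,E)})]=c$, which identifies $c$ with the left-hand side.

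Next, letting $t\to 0$, the first line of \eqref{dilab-eq-small-time-convergence} can be rearranged as
\begin{equation*}
\alpha_t-\frac{1}{2t}d_M q_*\Big[\frac{\omega}{2\pi}\mathrm{Td}(TN,\nabla^{TN})\mathrm{ch}(E,\nabla^E)\Big]=q_*\big[\widetilde{\mathrm{Td}}(TN,g^{TN})*\widetilde{\mathrm{ch}}(E,g^E)\big]+\mathscr{O}(\sqrt{t}).
\end{equation*}
The subtracted singular term is $d_M$-exact, so the left-hand side still represents $c$; and by Proposition \ref{dilab-prop-mul-odd-char-EF} the form $q_*[\widetilde{\mathrm{Td}}(TN,g^{TN})*\widetilde{\mathrm{ch}}(E,g^E)]$ is closed. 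Passing to the limit $t\to 0$ as before shows that this form also represents $c$, identifying $c$ with the right-hand side. Equating the two expressions for $c$ yields \eqref{dilab-thm-riemann-roch-grothendieck-eq}.

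The genuine difficulty is entirely contained in Theorem \ref{dilab-prop-large-small-time-convergence}, whose proof is deferred; the present assembly is a short formal argument. Within it, the only points needing care are the standard fact that $\smooth$-convergence of cohomologous closed forms to a closed limit preserves the de Rham class, and the observation that the divergent $1/t$ contribution in the small-time expansion is exact and therefore cohomologically invisible.
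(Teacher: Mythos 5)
Your proposal is correct and follows exactly the paper's own (very brief) argument: the paper deduces Theorem \ref{dilab-thm-riemann-roch-grothendieck} directly from Proposition \ref{dilab-prop-constant-class} (constancy of $[\alpha_t]$) and Theorem \ref{dilab-prop-large-small-time-convergence}, identifying the class from the $t\to\infty$ limit and from the $t\to 0$ limit after discarding the $d_M$-exact $1/t$ term. Your write-up merely makes explicit the routine points (closedness of the limits, exactness of the singular term) that the paper leaves implicit.
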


\subsection{Several intermediate results and the proof of Theorem \ref{dilab-prop-large-small-time-convergence}}
\label{dilab-subsec-intermediate-rrg}

\

We will now introduce various new odd Grassmann variables in order to 
be able to compute exactly the asymptotics of certain superconnection 
forms as $t\to 0$, and also to overcome the divergence of certain 
expressions. Our methods are closely related to the methods of 
\cite{bgs2,bgs3,bk}, 
where similar difficulties also appeared.

Let $a\index{a@$a$}$ be an additional complex coordinate, $\epsilon\index{epsilon@$\epsilon$}$ be an auxiliary odd Grassmann variable.

For 
\begin{equation}
u,v\in \Big\{ 
1 \,,\, da \,,\, d\bar{a} \,,\, dad\bar{a} \,,\,
\epsilon \,,\, \epsilon da \,,\, \epsilon d\bar{a} \,,\, \epsilon dad\bar{a} \Big\} 
\end{equation}  
and $\sigma\in\Omega^\cdot(M)$,  
we denote
\begin{align}
\begin{split}
(v \wedge \sigma)^u = 
\left\{
\begin{array}{rl}
\sigma & \text{ if } u = v \;,\\
     0 & \text{ else } \;.
\end{array} \right.
\end{split}
\end{align}

\begin{lemme}
\label{dilab-prop-a-bar-a-formula}
The following identity holds
\begin{align}
\label{dilab-eq-1-prop-a-bar-a-formula}
\begin{split}
& \trs \Big[D^\mathscr{E}\exp\big(D^{\mathscr{E},2}\big)\Big] \\
= \; & \trs \Big[\exp\big(-C^{\mathscr{E},2} - da\,\frac{1}{2}\big(\overline{\partial}^E_N+\overline{\partial}^{E,*}_N\big) \\
& \hspace{20mm} - d\bar{a}\,\big[\overline{\partial}^E_N+\overline{\partial}^{E,*}_N,\frac{\epsilon}{2}\omega^\mathscr{E}\big] + dad\bar{a}\,\frac{\epsilon}{2}\omega^\mathscr{E}\big)\Big]^{\epsilon dad\bar{a}} \\
& + d_M \trs \Big[\frac{1}{2}N^{\Lambda^\cdot(\overline{T^*N})}\exp\big( D^{\mathscr{E},2} \big)\Big] \;.
\end{split}
\end{align}
\end{lemme}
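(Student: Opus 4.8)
The plan is to expand the exponential on the right-hand side by Duhamel's formula and to read off the coefficient of $\epsilon\,da\,d\bar a$. Since $da$, $d\bar a$ and $\epsilon$ are nilpotent, only finitely many terms of the expansion survive, so the computation is purely algebraic. Throughout I write $L=D^{\mathscr{E},2}=-C^{\mathscr{E},2}$ and decompose the exponent in \eqref{dilab-eq-1-prop-a-bar-a-formula} as $L+\theta_1+\theta_2+\theta_3$, where $\theta_1=-\tfrac12\,da\,D^E_N$ carries $da$, $\theta_2=-d\bar a\,[D^E_N,\tfrac{\epsilon}{2}\omega^\mathscr{E}]$ carries $\epsilon\,d\bar a$, and $\theta_3=da\,d\bar a\,\tfrac{\epsilon}{2}\omega^\mathscr{E}$ carries $\epsilon\,da\,d\bar a$. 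Recall from \eqref{dilab-eq-lc-superconnection} that $C^\mathscr{E}=D^E_N+\n^{\mathscr{E},\mathrm{u}}$ and $D^\mathscr{E}=\tfrac12\omega^\mathscr{E}-\overline\delta$, where I abbreviate $\overline\delta=\overline{\partial}^E_N-\overline{\partial}^{E,*}_N$.

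I would first dispose of the exact term, which is the clean part. Since $D^E_N$ is a genuine fiberwise endomorphism, the supertrace of $[D^E_N,\cdot]$ vanishes by \eqref{dilab-eq-trs-commutator-zero}, and hence $d_M\trs[\mu]=\trs\big[[C^\mathscr{E},\mu]\big]$ for every $\mu$. A direct computation gives
\begin{equation*}
\big[C^\mathscr{E},N^{\Lambda^\cdot(\overline{T^*N})}\big]=\big[D^E_N,N^{\Lambda^\cdot(\overline{T^*N})}\big]=-\overline\delta,
\end{equation*}
because $\overline{\partial}^E_N$ and $\overline{\partial}^{E,*}_N$ raise and lower the $\Lambda^\cdot(\overline{T^*N})$-degree while $\n^{\mathscr{E},\mathrm{u}}$ preserves it. Combined with $[C^\mathscr{E},e^{L}]=0$ (a consequence of $L=-C^{\mathscr{E},2}$ and the graded Jacobi identity), this yields
\begin{equation*}
d_M\trs\Big[\tfrac12 N^{\Lambda^\cdot(\overline{T^*N})}\exp(D^{\mathscr{E},2})\Big]=-\tfrac12\trs\big[\overline\delta\,\exp(D^{\mathscr{E},2})\big].
\end{equation*}

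Next I would expand the exponential. The first-order Duhamel term reaches the $\epsilon\,da\,d\bar a$-component only through $\theta_3$; by cyclicity of the supertrace its contribution is $\tfrac12\trs\big[\omega^\mathscr{E}\exp(D^{\mathscr{E},2})\big]$, which is precisely the $\tfrac12\omega^\mathscr{E}$ part of $\trs[D^\mathscr{E}\exp(D^{\mathscr{E},2})]$. The second-order term contributes only through $\theta_1\theta_2$ and $\theta_2\theta_1$ (every other pairing contains $da^2$, $d\bar a^2$ or $\epsilon^2$ and vanishes). Folding the two orderings together and symmetrizing the simplex integral by cyclicity reduces it to $-\tfrac12\int_0^1\trs\big[D^E_N\,e^{sL}[D^E_N,\tfrac12\omega^\mathscr{E}]\,e^{(1-s)L}\big]\,ds$. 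Here I would invoke the Levi-Civita identity $[C^\mathscr{E},D^\mathscr{E}]=0$ from \eqref{dilab-eq-bianchi}: splitting it by form degree and using $[D^E_N,\overline\delta]=0$ gives $[D^E_N,\tfrac12\omega^\mathscr{E}]=[\n^{\mathscr{E},\mathrm{u}},\overline\delta]=[C^\mathscr{E},\overline\delta]$. Since $[C^\mathscr{E},e^{sL}]=0$, the Duhamel integral collapses, $\int_0^1 e^{sL}[C^\mathscr{E},\overline\delta]e^{(1-s)L}\,ds=\big[C^\mathscr{E},\int_0^1 e^{sL}\overline\delta\,e^{(1-s)L}\,ds\big]$, and after a further application of cyclicity and of $d_M\trs=\trs[[C^\mathscr{E},\cdot]]$ the second-order term is identified with $-\tfrac12\trs[\overline\delta\exp(D^{\mathscr{E},2})]$.

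Adding the three contributions, namely $\tfrac12\trs[\omega^\mathscr{E}e^{L}]-\tfrac12\trs[\overline\delta e^{L}]$ from the Duhamel series together with $-\tfrac12\trs[\overline\delta e^{L}]$ from the exact term, gives $\trs\big[(\tfrac12\omega^\mathscr{E}-\overline\delta)e^{L}\big]=\trs[D^\mathscr{E}\exp(D^{\mathscr{E},2})]$, which is the left-hand side. The main obstacle is the second-order step: one must track the Grassmann signs produced when commuting $da$, $d\bar a$ and $\epsilon$ past the odd operators, symmetrize the double simplex integral correctly, and recognize that the Duhamel integral collapses into a single supercommutator with $C^\mathscr{E}$. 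It is exactly here that $[C^\mathscr{E},D^\mathscr{E}]=0$ and the number-operator identity $[C^\mathscr{E},N^{\Lambda^\cdot(\overline{T^*N})}]=-\overline\delta$ are used, and the delicate point is to verify that the various spurious exact pieces cancel so as to leave only the stated $d_M$-exact term.
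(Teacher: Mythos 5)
Your proposal is correct and follows essentially the same route as the paper: you extract the $\epsilon\,da\,d\bar a$-coefficient by a Duhamel expansion (which is exactly what the paper's $\tfrac{\partial}{\partial b}$ device encodes), identify the cross term $\theta_1\theta_2$ with $-\tfrac12\trs\big[(\overline{\partial}^E_N-\overline{\partial}^{E,*}_N)\exp(D^{\mathscr{E},2})\big]$ via the number-operator commutator, and match that with the $d_M$-exact term through $[C^\mathscr{E},\tfrac12 N^{\Lambda^\cdot(\overline{T^*N})}]$. The only cosmetic difference is that you obtain the key identity as $[D^E_N,\tfrac12\omega^\mathscr{E}]=[C^\mathscr{E},\overline{\partial}^E_N-\overline{\partial}^{E,*}_N]$ from the degree-one part of $[C^\mathscr{E},D^\mathscr{E}]=0$, whereas the paper writes the same identity as $[N^{\Lambda^\cdot(\overline{T^*N})},C^{\mathscr{E},2}]=[\overline{\partial}^E_N+\overline{\partial}^{E,*}_N,\tfrac12\omega^\mathscr{E}]$; these coincide since $[N^{\Lambda^\cdot(\overline{T^*N})},C^\mathscr{E}]=\overline{\partial}^E_N-\overline{\partial}^{E,*}_N$.
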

\begin{proof}
By \eqref{dilab-eq-bianchi} and \eqref{dilab-eq-lc-superconnection}, we have
\begin{align}
\label{dilab-proof-prop-a-bar-a-formula-eq-2}
\begin{split}
\big[N^{\Lambda^\cdot(\overline{T^*N})},C^{\mathscr{E},2}\big]
= \; & -\big[N^{\Lambda^\cdot(\overline{T^*N})},D^{\mathscr{E},2}\big] \\
= \; & -\big[N^{\Lambda^\cdot(\overline{T^*N})},\big[\overline{\partial}^{E,*}_N-\overline{\partial}^E_N,\frac{1}{2}\omega^\mathscr{E}\big]\big] 
= \big[\overline{\partial}^E_N+\overline{\partial}^{E,*}_N,\frac{1}{2}\omega^\mathscr{E}\big] \;,
\end{split}
\end{align}
which implies
\begin{align}
\label{dilab-proof-prop-a-bar-a-formula-eq-3}
\begin{split}
& \trs \Big[\exp\big(-C^{\mathscr{E},2} 
- da\,\frac{1}{2}\big(\overline{\partial}^E_N+\overline{\partial}^{E,*}_N\big)
- d\bar{a}\,\big[\overline{\partial}^E_N+\overline{\partial}^{E,*}_N,\frac{\epsilon}{2}\omega^\mathscr{E}\big] 
\big)\Big]^{\epsilon dad\bar{a}} \\
= \; & \frac{\partial}{\partial b} \trs \Big[ - \frac{1}{2}(\overline{\partial}^E_N+\overline{\partial}^{E,*}_N) \exp\big( -C^{\mathscr{E},2} + b\big[\overline{\partial}^E_N+\overline{\partial}^{E,*}_N,\frac{1}{2}\omega^\mathscr{E}\big] \big)\Big]_{b=0} \\
= \; & \frac{\partial}{\partial b} \trs \Big[ - \frac{1}{2}(\overline{\partial}^E_N+\overline{\partial}^{E,*}_N) \exp\big( -C^{\mathscr{E},2} + b\big[N^{\Lambda^\cdot(\overline{T^*N})},C^{\mathscr{E},2}\big] \big)\Big]_{b=0} \\
= \; & \frac{\partial}{\partial b} \trs \Big[ - \frac{1}{2}(\overline{\partial}^E_N+\overline{\partial}^{E,*}_N) \big[N^{\Lambda^\cdot(\overline{T^*N})},\exp\big( -C^{\mathscr{E},2} \big)\big]\Big]\\
= \; & \trs \Big[ -\frac{1}{2}\big[N^{\Lambda^\cdot(\overline{T^*N})},\overline{\partial}^E_N+\overline{\partial}^{E,*}_N\big]\exp\big( - C^{\mathscr{E},2} \big)\Big] \\
= \; & \trs \Big[\frac{1}{2}\big(\overline{\partial}^{E,*}_N-\overline{\partial}^E_N\big)\exp\big( D^{\mathscr{E},2} \big)\Big] \;.
\end{split}
\end{align}
Then
\begin{align}
\begin{split}
& \trs \Big[\exp\big(-C^{\mathscr{E},2} 
- da\,\frac{1}{2}\big(\overline{\partial}^E_N+\overline{\partial}^{E,*}_N\big) \\
&\hspace{35mm} - d\bar{a}\,\big[\overline{\partial}^E_N+\overline{\partial}^{E,*}_N,\frac{\epsilon}{2}\omega^\mathscr{E}\big] 
+ dad\bar{a}\,\frac{\epsilon}{2}\omega^\mathscr{E}\big)\Big]^{\epsilon dad\bar{a}} \\
= \; & \trs \Big[\exp\big(-C^{\mathscr{E},2} 
- da\,\frac{1}{2}\big(\overline{\partial}^E_N+\overline{\partial}^{E,*}_N\big) 
- d\bar{a}\,\big[\overline{\partial}^E_N+\overline{\partial}^{E,*}_N,\frac{\epsilon}{2}\omega^\mathscr{E}\big] \big)\Big]^{\epsilon dad\bar{a}} \\
& + \trs \Big[\frac{1}{2}\omega^\mathscr{E}\exp\big(D^{\mathscr{E},2}\big)\Big]  \\
= \; & \trs \Big[\frac{1}{2}\big(\overline{\partial}^{E,*}_N - \overline{\partial}^E_N + \omega^\mathscr{E}\big)\exp\big(D^{\mathscr{E},2}\big)\Big]  \\
= \; & \trs \Big[\big(\overline{\partial}^{E,*}_N - \overline{\partial}^E_N + \frac{1}{2}\omega^\mathscr{E}\big)\exp\big(D^{\mathscr{E},2}\big)\Big]  
-\trs \Big[\frac{1}{2} \big(\overline{\partial}^{E,*}_N - \overline{\partial}^E_N \big)\exp\big(D^{\mathscr{E},2}\big)\Big] \\
= \; & \trs \Big[D^\mathscr{E}\exp\big(D^{\mathscr{E},2}\big)\Big] 
- \trs \Big[\big[C^\mathscr{E},\frac{1}{2}N^{\Lambda^\cdot(\overline{T^*N})}\big]\exp\big( D^{\mathscr{E},2} \big)\Big] \\
= \; & \trs \Big[D^\mathscr{E}\exp\big(D^{\mathscr{E},2}\big)\Big] 
- d_M \trs \Big[\frac{1}{2}N^{\Lambda^\cdot(\overline{T^*N})}\exp\big( D^{\mathscr{E},2} \big)\Big] \;.
\end{split}
\end{align}
The last equation is just what we needed to prove. 
\end{proof}

Let $\mathcal{N}_+$, $M_+$, $q_+$, $\mathscr{E}_+$, $\omega^{\mathscr{E}_+}$, $C^{\mathscr{E}_+}$ and $D^{\mathscr{E}_+}$ 
be the same as in the proof of Proposition \ref{dilab-prop-trans-alpha-beta}.

\begin{lemme}
\label{dilab-prop-a-bar-a-formula-second}
For $t>0$, the following identity holds:
\begin{align}
\label{dilab-eq-prop-a-bar-a-formula-second}
\begin{split}
& \big(N^{\Lambda^\cdot(T^*M)} + 1 + t\frac{\partial}{\partial t} \big) 
\trs \Big[\frac{1}{2}N^{\Lambda^\cdot(\overline{T^*N})}\exp\big( D^{\mathscr{E},2}_t \big)\Big] \\
= \; & \trs \Big[\exp\big(-C^{\mathscr{E}_+,2} 
- da\,\frac{1}{2}\big(\overline{\partial}^E_N + t\overline{\partial}^{E,*}_N\big) \\
& \hspace{20mm} - d\bar{a}\,\big[\overline{\partial}^E_N + t\overline{\partial}^{E,*}_N , \frac{\epsilon t}{2}\omega^{\mathscr{E}_+}\big] 
+ dad\bar{a}\,\frac{\epsilon t}{2}\omega^{\mathscr{E}_+}\big)\Big]^{\epsilon dad\bar{a}dt} \\
& + \text{closed form} \;.
\end{split}
\end{align}
\end{lemme}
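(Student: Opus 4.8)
The plan is to obtain \eqref{dilab-eq-prop-a-bar-a-formula-second} by applying Lemma \ref{dilab-prop-a-bar-a-formula} verbatim to the enlarged flat fibration $q_+ : \mathcal{N}_+ \to M_+$, whose compact complex fiber is still $N$ but whose base now carries the extra coordinate $t$. The point is that $\mathcal{N}_+ = \mathcal{N}\times\mathbb{R}_+ \to M\times\mathbb{R}_+ = M_+$ is again a flat fibration with compact complex fiber satisfying all the hypotheses of \textsection\ref{dilab-subsection-a-flat-fibration}, and that $\frac{1}{t}g^{TN}$ is a fiberwise K\"ahler metric on it. Hence Lemma \ref{dilab-prop-a-bar-a-formula} applies with $C^\mathscr{E}, D^\mathscr{E}, \omega^\mathscr{E}, d_M$ and the fiberwise Dirac operator $\overline{\partial}^E_N + \overline{\partial}^{E,*}_N$ replaced by their $+$-counterparts. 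By \eqref{dilab-eq-lc-superconnection-rescaling}, the fiberwise part of $C^\mathscr{E}_t$ is $\overline{\partial}^E_N + t\overline{\partial}^{E,*}_N$, so this is exactly the fiberwise Dirac operator attached to $\frac{1}{t}g^{TN}$, which explains its appearance in the $da$- and $d\bar{a}$-terms of the statement.

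First I would record the relations between the $+$-objects and the $t$-rescaled objects on $M$, already isolated in the proof of Proposition \ref{dilab-prop-trans-alpha-beta}: the formulas \eqref{dilab-proof-prop-trans-alpha-beta-eq-0} for $d_{M_+}$ and $\omega^{\mathscr{E}_+}$, together with the resulting expressions for $C^{\mathscr{E}_+}$ and $D^{\mathscr{E}_+}$. In particular $\omega^{\mathscr{E}_+} = \omega^\mathscr{E} + \frac{1}{t}dt\wedge(N^{\Lambda^\cdot(\overline{T^*N})}-n)$ and $d_{M_+} = d_M + dt\wedge\frac{\partial}{\partial t}$. The term $d_{M_+}\trs[\frac{1}{2}N^{\Lambda^\cdot(\overline{T^*N})}\exp(D^{\mathscr{E}_+,2})]$ produced by Lemma \ref{dilab-prop-a-bar-a-formula} then splits into a purely horizontal piece, which is $d_M$-exact and hence will be absorbed into the ``closed form'' of \eqref{dilab-eq-prop-a-bar-a-formula-second}, and a $dt$-piece carrying the operator $\frac{\partial}{\partial t}$.

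The second step is to extract the $dt$-component of the whole $+$-identity. Since $dt$ anticommutes with $da, d\bar{a}, \epsilon$ and is independent of them, extracting first the $\epsilon dad\bar{a}$-coefficient (as in Lemma \ref{dilab-prop-a-bar-a-formula}) and then the $dt$-coefficient is precisely the $\epsilon dad\bar{a}dt$-extraction on the right-hand side. Here the factor $t$ multiplying $\omega^{\mathscr{E}_+}$ is bookkeeping: the $dt$ demanded by the extraction can only be supplied by the $\frac{1}{t}dt\wedge(N^{\Lambda^\cdot(\overline{T^*N})}-n)$ summand of $\omega^{\mathscr{E}_+}$, and writing $\frac{\epsilon t}{2}\omega^{\mathscr{E}_+}$ rather than $\frac{\epsilon}{2}\omega^{\mathscr{E}_+}$ exactly cancels this $\frac{1}{t}$, which is the normalization turning the bare $\frac{\partial}{\partial t}$ into $t\frac{\partial}{\partial t}$ on the left.

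Finally, the operator $N^{\Lambda^\cdot(T^*M)}+1$ on the left is accounted for by the homogeneity of the superconnection forms under the metric rescaling. Writing $\gamma_t = \trs[\frac{1}{2}N^{\Lambda^\cdot(\overline{T^*N})}\exp(D^{\mathscr{E},2}_t)]$ and collecting the powers of $t$ carried by each $M$-form degree, one uses the elementary identity $t\frac{\partial}{\partial t}\big(t^{N^{\Lambda^\cdot(T^*M)}+1}\gamma_t\big) = t^{N^{\Lambda^\cdot(T^*M)}+1}\big(N^{\Lambda^\cdot(T^*M)}+1+t\frac{\partial}{\partial t}\big)\gamma_t$, so that the degree-counting operator emerges automatically once the $t$-homogeneity of the $+$-objects is taken into account. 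I expect the main obstacle to be exactly this bookkeeping: tracking the powers of $t$ carried by each component of $\omega^{\mathscr{E}_+}$, $C^{\mathscr{E}_+,2}$ and $D^{\mathscr{E}_+}$ through the $\epsilon dad\bar{a}dt$-extraction, and matching the Grassmann normalization (the rescaling of $\epsilon$), so that the surviving $t$-dependence assembles into precisely $N^{\Lambda^\cdot(T^*M)}+1+t\frac{\partial}{\partial t}$ while the remainder is genuinely $d_M$-closed.
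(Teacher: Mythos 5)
Your overall strategy is the paper's: apply Lemma \ref{dilab-prop-a-bar-a-formula} to the fibration $q_+:\mathcal{N}_+\to M_+$ with metric $\frac{1}{t}g^{TN}$, extract the $dt$-component, and rescale $\epsilon\mapsto\epsilon t$ (legitimate because the extracted coefficient is linear in $\epsilon$) to implement multiplication by $t$. But there is a genuine gap at the heart of the argument: you never confront the term produced by the $dt$-component of $D^{\mathscr{E}_+}$ itself. Since $D^{\mathscr{E}_+}=D^{\mathscr{E}}_t+\frac{1}{2t}dt\wedge(N^{\Lambda^\cdot(\overline{T^*N})}-n)$, the $dt$-extraction of $\trs[D^{\mathscr{E}_+}\exp(D^{\mathscr{E}_+,2})]$ yields, besides $\frac{\partial}{\partial t}\trs[\frac{1}{2}N^{\Lambda^\cdot(\overline{T^*N})}\exp(D^{\mathscr{E},2}_t)]$ and a $d_M$-exact piece, two further contributions: the direct term $\trs[\frac{1}{2t}(N^{\Lambda^\cdot(\overline{T^*N})}-n)\exp(D^{\mathscr{E},2}_t)]$ — which after multiplication by $t$ is the true origin of the ``$+1$'' on the left-hand side, not the homogeneity identity you invoke — and the cross term
\begin{equation*}
\trs\Big[D^{\mathscr{E}}_t\exp\big(\big(D^{\mathscr{E}}_t+dt\,\tfrac{1}{2}N^{\Lambda^\cdot(\overline{T^*N})}\big)^2\big)\Big]^{dt}\,,
\end{equation*}
which your proposal does not mention at all.

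Handling that cross term is the hard half of the proof. One must first show, via a supertrace--commutator computation with $C^{\mathscr{E}}_t$ (the paper's \eqref{dilab-proof-prop-a-bar-a-formula-second-eq-2}), that modulo closed forms it equals $\trs[2D^{\mathscr{E},2}_t\exp(D^{\mathscr{E},2}_t+dt\,\frac{1}{2}N^{\Lambda^\cdot(\overline{T^*N})})]^{dt}=\trs[N^{\Lambda^\cdot(\overline{T^*N})}D^{\mathscr{E},2}_t\exp(D^{\mathscr{E},2}_t)]$, and only then apply the rescaling relation $(1+b)D^{\mathscr{E},2}_t=(1+b)^{\frac{1}{2}N^{\Lambda^\cdot(T^*M)}}D^{\mathscr{E},2}_{(1+b)t}(1+b)^{-\frac{1}{2}N^{\Lambda^\cdot(T^*M)}}$ to convert this into $\big(t\frac{\partial}{\partial t}+\frac{1}{2}N^{\Lambda^\cdot(T^*M)}\big)\trs[N^{\Lambda^\cdot(\overline{T^*N})}\exp(D^{\mathscr{E},2}_t)]$ modulo closed forms. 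The operator $N^{\Lambda^\cdot(T^*M)}+1+t\frac{\partial}{\partial t}$ then emerges from combining this with the other contributions (note the $t\frac{\partial}{\partial t}$ is a difference of two such terms with coefficients $2$ and $-1$). Your heuristic $t\frac{\partial}{\partial t}\big(t^{N+1}\gamma_t\big)=t^{N+1}\big(N+1+t\frac{\partial}{\partial t}\big)\gamma_t$ is correct algebra but is not attached to any step of the actual computation, and the exponent $N^{\Lambda^\cdot(T^*M)}+1$ does not reflect the true homogeneity (which enters with weight $\frac{1}{2}N^{\Lambda^\cdot(T^*M)}$ and only through the cross term). A minor further slip: the $dt$ in the $\epsilon\,da\,d\bar{a}\,dt$-extraction is not supplied only by the $\frac{1}{t}dt\wedge(N^{\Lambda^\cdot(\overline{T^*N})}-n)$ summand of $\omega^{\mathscr{E}_+}$; it can also come from $C^{\mathscr{E}_+,2}$. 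The rescaling of $\epsilon$ is justified by $\epsilon$-linearity alone, not by that claim.
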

\begin{proof}
By \eqref{dilab-eq-1-prop-a-bar-a-formula}, we get
\begin{align}
\begin{split}
& \trs \Big[D^{\mathscr{E}_+}\exp\big(D^{\mathscr{E}_+,2}\big)\Big] \\
= \; & \trs \Big[\exp\big(-C^{\mathscr{E}_+,2} 
- da\,\frac{1}{2}\big( \overline{\partial}^E_N + t\overline{\partial}^{E,*}_N \big) \\
& \hspace{20mm} - d\bar{a}\,\big[\overline{\partial}^E_N + t\overline{\partial}^{E,*}_N , \frac{\epsilon}{2}\omega^{\mathscr{E}_+}\big]
+ dad\bar{a}\,\frac{\epsilon}{2}\omega^{\mathscr{E}_+}\big)\Big]^{\epsilon dad\bar{a}} \\
& + d_{M_+} \trs \Big[\frac{1}{2}N^{\Lambda^\cdot(\overline{T^*N})}\exp\big( D^{\mathscr{E}_+,2} \big)\Big] \;.
\end{split}
\end{align}
Taking the $dt$ component, we get
\begin{align}
\label{dilab-proof-prop-a-bar-a-formula-second-eq-0}
\begin{split}
& \trs \Big[\frac{1}{2t}\big(N^{\Lambda^\cdot(\overline{T^*N})}-n\big)\exp\big(D^{\mathscr{E},2}_t\big)\Big] \\
& + \trs \Big[D^{\mathscr{E}}_t\exp\big(\big( D^{\mathscr{E}}_t + dt\,\frac{1}{2t}N^{\Lambda^\cdot(\overline{T^*N})} - dt\,\frac{n}{2t}\big)^2\big)\Big]^{dt} \\
= \; & \trs \Big[\exp\big(-C^{\mathscr{E}_+,2} 
- da\,\frac{1}{2}\big( \overline{\partial}^E_N + t\overline{\partial}^{E,*}_N \big) \\
& \hspace{20mm} - d\bar{a}\,\big[\overline{\partial}^E_N + t\overline{\partial}^{E,*}_N , \frac{\epsilon}{2}\omega^{\mathscr{E}_+}\big] 
+ dad\bar{a}\,\frac{\epsilon}{2}\omega^{\mathscr{E}_+}\big)\Big]^{\epsilon dad\bar{a}dt} \\
& - d_M \trs \Big[\frac{1}{2}N^{\Lambda^\cdot(\overline{T^*N})}\exp\big( \big(D^{\mathscr{E}}_t + \frac{1}{2}dt\, N^{\Lambda^\cdot(\overline{T^*N})}\big)^2 \big)\Big]^{dt} \\
& + \frac{\partial}{\partial t} \trs \Big[\frac{1}{2}N^{\Lambda^\cdot(\overline{T^*N})}\exp\big( D^{\mathscr{E},2}_t \big)\Big] \;.
\end{split}
\end{align} 
We multiply \eqref{dilab-proof-prop-a-bar-a-formula-second-eq-0} by $t$ and subtract the closed forms. 
Since $dt$ supercommutes with $N^{\Lambda^\cdot(\overline{T^*N})}$ and $D^{\mathscr{E}}_t$, 
By Proposition \ref{dilab-prop-chern-index-theorem}, \ref{dilab-prop-constant-class},
we can delete $\frac{n}{2t}$ and $dt\,\frac{n}{2t}$ on the left-hand side of \eqref{dilab-proof-prop-a-bar-a-formula-second-eq-0}.
We obtain
\begin{align}
\label{dilab-proof-prop-a-bar-a-formula-second-eq-1}
\begin{split}
& \trs \Big[\frac{1}{2}N^{\Lambda^\cdot(\overline{T^*N})}\exp\big(D^{\mathscr{E},2}_t\big)\Big] \\
& + \trs \Big[D^{\mathscr{E}}_t\exp\big(\big( D^{\mathscr{E}}_t + dt\,\frac{1}{2}N^{\Lambda^\cdot(\overline{T^*N})} \big)^2\big)\Big]^{dt} \\
= \; & \trs \Big[\exp\big(-C^{\mathscr{E}_+,2} 
- da\,\frac{1}{2}\big( \overline{\partial}^E_N + t\overline{\partial}^{E,*}_N \big) \\
& \hspace{20mm} - d\bar{a}\,\big[\overline{\partial}^E_N + t\overline{\partial}^{E,*}_N , \frac{\epsilon t}{2}\omega^{\mathscr{E}_+}\big] 
+ dad\bar{a}\,\frac{\epsilon t}{2}\omega^{\mathscr{E}_+}\big)\Big]^{\epsilon dad\bar{a}dt} \\
& + t\frac{\partial}{\partial t} \trs \Big[\frac{1}{2}N^{\Lambda^\cdot(\overline{T^*N})}\exp\big(D^{\mathscr{E},2}_t\big)\Big] 
+ \text{closed form} \;.
\end{split}
\end{align}

We have
\begin{align}
\label{dilab-proof-prop-a-bar-a-formula-second-eq-2}
\begin{split}
& d_M \trs \Big[D^{\mathscr{E}}_t\exp\big(\big( D^{\mathscr{E}}_t + dt\,\frac{1}{2}N^{\Lambda^\cdot(\overline{T^*N})} \big)^2\big)\Big]^{dt} \\
= \; & \trs \Big[\big[C^{\mathscr{E}}_t,D^{\mathscr{E}}_t\exp\big(\big( D^{\mathscr{E}}_t + dt\,\frac{1}{2}N^{\Lambda^\cdot(\overline{T^*N})} \big)^2\big)\big]\Big]^{dt} \\
= \; & - \trs \Big[D^{\mathscr{E}}_t\exp\big(D^{\mathscr{E},2}_t 
+ \big[C^{\mathscr{E}}_t,\big[D^{\mathscr{E}}_t,dt\,\frac{1}{2}N^{\Lambda^\cdot(\overline{T^*N})}\big]\big] \big)\Big]^{dt} \\
= \; & \trs \Big[D^{\mathscr{E}}_t\exp\big(D^{\mathscr{E},2}_t 
+ \big[D^{\mathscr{E}}_t,\big[C^{\mathscr{E}}_t,dt\,\frac{1}{2}N^{\Lambda^\cdot(\overline{T^*N})}\big]\big] \big)\Big]^{dt} \\
= \; & \trs \Big[D^{\mathscr{E}}_t\big[D^{\mathscr{E}}_t,\exp\big(D^{\mathscr{E},2}_t 
+ \big[C^{\mathscr{E}}_t,dt\,\frac{1}{2}N^{\Lambda^\cdot(\overline{T^*N})}\big]\big)\big]\Big]^{dt} \\
= \; & \trs \Big[2D^{\mathscr{E},2}_t\exp\big(D^{\mathscr{E},2}_t 
+ \big[C^{\mathscr{E}}_t,dt\,\frac{1}{2}N^{\Lambda^\cdot(\overline{T^*N})}\big]\big)\Big]^{dt} \\
= \; & \left(d_M \trs \Big[2D^{\mathscr{E},2}_t\exp\big(D^{\mathscr{E},2}_t 
+  dt\,\frac{1}{2}N^{\Lambda^\cdot(\overline{T^*N})}\big)\Big] \right)^{dt} \;.
\end{split}
\end{align}
Thus
\begin{align}
\label{dilab-proof-prop-a-bar-a-formula-second-eq-3}
\begin{split}
& \trs \Big[D^{\mathscr{E}}_t\exp\big(\big( D^{\mathscr{E}}_t + dt\,\frac{1}{2}N^{\Lambda^\cdot(\overline{T^*N})} \big)^2\big)\Big]^{dt} \\
= \; & \trs \Big[2D^{\mathscr{E},2}_t\exp\big(D^{\mathscr{E},2}_t 
+ dt\,\frac{1}{2}N^{\Lambda^\cdot(\overline{T^*N})}\big)\Big]^{dt} + \text{closed form} \\
= \; & \frac{\partial}{\partial b} \trs \Big[N^{\Lambda^\cdot(\overline{T^*N})}\exp\big((1+b)D^{\mathscr{E},2}_t\big)\Big]_{b=0} + \text{closed form} \\
= \; & \frac{\partial}{\partial b} \trs \Big[N^{\Lambda^\cdot(\overline{T^*N})}\exp\Big(
(1+b)^{\frac{1}{2}N^{\Lambda^\cdot(T^*M)}} D^{\mathscr{E},2}_{(1+b)t} (1+b)^{-\frac{1}{2}N^{\Lambda^\cdot(T^*M)}} \Big)\Big]_{b=0} \\
& + \text{closed form} \\
= \; & \frac{\partial}{\partial b} (1+b)^{\frac{1}{2}N^{\Lambda^\cdot(T^*M)}} 
\trs \Big[N^{\Lambda^\cdot(\overline{T^*N})}\exp\big(D^{\mathscr{E},2}_{(1+b)t}\big)\Big]_{b=0} + \text{closed form} \\
= \; & t\frac{\partial}{\partial t} \trs \Big[N^{\Lambda^\cdot(\overline{T^*N})}\exp\big(D^{\mathscr{E},2}_{t}\big)\Big] \\
& + \frac{1}{2}N^{\Lambda^\cdot(T^*M)} \trs \Big[N^{\Lambda^\cdot(\overline{T^*N})}\exp\big(D^{\mathscr{E},2}_{t}\big)\Big] + \text{closed form} \;.
\end{split}
\end{align}

By \eqref{dilab-proof-prop-a-bar-a-formula-second-eq-1} and \eqref{dilab-proof-prop-a-bar-a-formula-second-eq-3}, 
we get \eqref{dilab-eq-prop-a-bar-a-formula-second}.
\end{proof}

Let $r^N\index{rN@$r^N$}$ be the scalar curvature of $(N,g^{TN})$. 
Let
\begin{equation}
R^E = \n^{E,2} \;,\hspace{5mm} 
R^{TN} = \n^{TN,2}
\end{equation}
be the curvatures of $\nabla^{E}$ and $\nabla^{TN}$ on $E$ and $TN$ over $\mathcal{N}$.
Let $\nabla^{\Lambda^{n}\left(TN\right)}$ be the connection $\Lambda^{n}\left(TN\right)$, 
which is induced by $\n^{TN}$.
Then its curvature is $\mathrm{Tr}\left[R^{TN}\right]$.

Recall that $S^{T_{\mathbb R}N}$ was defined in 
Definition \ref{defSTX}. Since our fibration is flat, it follows from 
\cite[(1.28)]{b} that, for $U\in T_{\mathbb R}N$ and $V,W\in 
T^{H}\mathcal{N}$,
\begin{equation}
\label{eq:van1}
\left\langle  S^{T_{\mathbb R}N}\left(U\right)V,W\right\rangle = \big\langle U,T(V,W)\big\rangle = 0 \;.
\end{equation}

Let $\n^{\Lambda^\cdot(\overline{T^*N})\otimes E} \index{nablaLambdaTNE@$\n^{\Lambda^\cdot(\overline{T^*N})\otimes E}$}$ be the connection on $\Lambda^\cdot(\overline{T^*N})\otimes E$
induced by $\n^{\Lambda^\cdot(\overline{T^*N})}$ and $E$.

Recall that $\omega$ is the fiberwise K{\"a}hler form, 
$\omega^{TN}$ and $\omega^E$ are the variation of metrics on $TN$ and $E$.
We also recall that $c(\cdot)$ is the Clifford action 
(cf. \textsection \eqref{dilab-eq-pre-clifford-complex-rep}) 
associated with $g^{TN}/2$.

Let $(e_i)_{1 \leqslant i \leqslant 2n}\index{ei@$e_i$}$ be an orthonormal basis of 
$T_\mathbb{R}N$, 
let $(e^i)_{1 \leqslant i \leqslant 2n}\index{eidual@$e^i$}$ be the corresponding dual basis. 
Let $(f_\alpha)_{1 \leqslant \alpha \leqslant m}\index{falpha@$f_\alpha$}$ a basis of $TM$. 
We identify the $f_{\alpha}$ with their horizontal lifts in
 $T^H\mathcal{N}$. 
Let $(f^\alpha)_{1 \leqslant \alpha \leqslant m}\index{falphadual@$f^\alpha$}$ be the corresponding dual basis.

To interpret properly the formula that follows, 
we need to extend the basis $e_{i}$ 
to a parallel basis of $T_{\mathbb R}N$ 
near the point $x$ which is considered. 
Moreover, we may suppose that $\n^{T_\mathbb{R}N}_\cdot e_i = 0$ at the point $x$.

\begin{prop}
\label{dilab-prop-lichnerowicz-a-bar-a}
The following identity holds:
\begin{align}
\label{dilab-eq-lichnerowicz-a-bar-a}
\begin{split}
& -C^{\mathscr{E},2} 
- da\,\frac{1}{2}(\overline{\partial}^E_N+\overline{\partial}^{E,*}_N) 
- d\bar{a}\,\big[\overline{\partial}^E_N+\overline{\partial}^{E,*}_N,\frac{\epsilon}{2}\omega^\mathscr{E}\big] 
+ dad\bar{a}\,\frac{\epsilon}{2}\omega^\mathscr{E} \\
= \; & \frac{1}{2} \Big(\n^{\Lambda^\cdot(\overline{T^*N})\otimes E}_{e_i} + \langle S^{T_\mathbb{R}N}(e_i)e_j,f_\alpha \rangle c(e_j)f^\alpha \\
& \hspace{10mm} - da\,\frac{1}{2}c(e_i) - d\bar{a}\epsilon\,\frac{\sqrt{-1}}{2}
(d_M\omega)(e_i,e_j)c(e_j) \Big)^2  \\
& - d\bar{a}\epsilon\,\big[ \n^{\Lambda^\cdot(\overline{T^*N})\otimes E}_{e_i} , \frac{1}{2}\omega^E - \frac{1}{8}(d_M\omega^{TN})(e_j,Je_j) \big] c(e_i) \\
& + dad\bar{a}\epsilon\,\Big(\frac{1}{2}\omega^E - \frac{1}{8}(d_M\omega)(e_j,Je_j)\Big) \\
& - \frac{1}{2}\Big(R^E+\frac{1}{2}\tr[R^{TN}]\Big)(e_i,e_j)c(e_i)c(e_j) 
- \Big(R^E+\frac{1}{2}\tr[R^{TN}]\Big)(e_i,f_\alpha\big)c(e_i)f^\alpha \\
& - \frac{1}{2}\Big(R^E+\frac{1}{2}\tr[R^{TN}]\Big)(f_\alpha,f_\beta)f^\alpha f^\beta 
- \frac{1}{8}r^N \;.
\end{split}
\end{align}
\end{prop}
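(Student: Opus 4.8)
The plan is to reduce the left-hand side to the Bismut Lichnerowicz formula for the Levi-Civita superconnection, and then to absorb the three Grassmann-weighted first-order terms by completing a square. I would organize the computation according to the total degree in the auxiliary variables $da$, $d\bar a$, $\epsilon$, matching each homogeneous component of \eqref{dilab-eq-lichnerowicz-a-bar-a} separately.

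First I would treat the part free of $da$, $d\bar a$, $\epsilon$, which by \eqref{dilab-eq-bianchi} is exactly $-C^{\mathscr{E},2}$. By Proposition \ref{dilab-prop-lc-superconnection}, $C^{\mathscr{E}}$ is the Bismut Levi-Civita superconnection associated with $(T^H\mathcal{N}, g^{T_{\mathbb R}N}, g^E)$, so its square is computed by the Lichnerowicz formula of \cite{b}. Since $\sqrt{2}\,D^E_N$ is the fiberwise $\mathrm{spin}^c$-Dirac operator for $g^{TN}/2$, the relevant twisting curvature is $R^E + \frac{1}{2}\tr[R^{TN}]$, where the second summand is the curvature of the determinant line $\Lambda^n(TN)$ entering the $\mathrm{spin}^c$ structure. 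The decisive simplification is flatness: by \eqref{eq:van1} every term of the Bismut formula containing the fibration curvature $T$ vanishes, and the tensor $\langle S^{T_{\mathbb R}N}(e_i)e_j, f_\alpha\rangle$ survives only through its mixed vertical--horizontal components. This produces precisely the Grassmann-free part of \eqref{dilab-eq-lichnerowicz-a-bar-a}: the Bochner square of $\n^{\Lambda^\cdot(\overline{T^*N})\otimes E}_{e_i} + \langle S^{T_\mathbb{R}N}(e_i)e_j, f_\alpha\rangle c(e_j)f^\alpha$, the Clifford-contracted twisting curvature in its three bidegrees, and the scalar term $-\frac{1}{8}r^N$.

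Next I would reinstate the auxiliary variables. Expressing the fiberwise Dirac operator $\overline{\partial}^E_N + \overline{\partial}^{E,*}_N$ through the Clifford action $c(\cdot)$ and $\n^{\Lambda^\cdot(\overline{T^*N})\otimes E}$, the linear term $-da\,\frac{1}{2}(\overline{\partial}^E_N + \overline{\partial}^{E,*}_N)$ is recovered from the cross term generated by inserting $-da\,\frac{1}{2}c(e_i)$ into the Bochner square. The terms in $d\bar a$ and $dad\bar a$ are obtained by expanding the commutator $[\overline{\partial}^E_N + \overline{\partial}^{E,*}_N, \frac{\epsilon}{2}\omega^{\mathscr{E}}]$ and invoking the decomposition \eqref{dilab-eq-omega-big-e}, namely $\omega^{\mathscr{E}} = \omega^{\Lambda^\cdot(\overline{T^*N})} + \omega^E + k_N$. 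The endomorphism piece $\omega^E$ passes through untouched, accounting for the $\frac{1}{2}\omega^E$ contributions, while the pieces $\omega^{\Lambda^\cdot(\overline{T^*N})}$ and $k_N$, being the horizontal variations of the fiberwise metric and volume form, generate the terms involving $d_M\omega$ and $d_M\omega^{TN}$.

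The main obstacle will be the exact determination of the coefficients $\frac{\sqrt{-1}}{2}(d_M\omega)(e_i,e_j)c(e_j)$ and $-\frac{1}{8}(d_M\omega^{TN})(e_j,Je_j)$. This amounts to rewriting $\omega^{\mathscr{E}}$ in terms of the horizontal derivatives of the K\"ahler form $\omega$ and of the Hermitian metric with respect to the flat connection, and then tracking how $d_M$ commutes with the fiberwise Clifford action and with the Hodge isomorphism $*^E$. Here the compatibility of $\n^{T_\mathbb{R}N}$ with the complex structure $J$ and the fiberwise K\"ahler identities, both recorded in \textsection\ref{dilab-subsec-vertical-metrics}, are essential; the factor $\sqrt{-1}$ and the contractions against $J$ are a direct reflection of the K\"ahler condition on $N$. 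Once these commutators are in hand, matching the remaining homogeneous components in $da$, $d\bar a$, $\epsilon$ is a lengthy but mechanical bookkeeping, entirely in the spirit of \cite{bgs3, bk}.
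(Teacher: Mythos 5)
Your overall strategy coincides with the paper's: apply Bismut's Lichnerowicz formula to $-C^{\mathscr{E},2}$ using the flatness vanishing \eqref{eq:van1}, make $\omega^{\mathscr{E}}$ explicit via \eqref{dilab-eq-omega-big-e} (the paper uses \cite[Proposition 1.19]{bgs3} to get \eqref{dilab-eq-omega-big-e-explicite}), and then complete the square so that the $da$, $d\bar a\epsilon$ and $dad\bar a\epsilon$ terms are absorbed. However, there is a genuine gap at precisely the point you dismiss as ``mechanical bookkeeping.'' When you insert $-da\,\tfrac{1}{2}c(e_i)$ and $-d\bar a\epsilon\,\tfrac{\sqrt{-1}}{2}(d_M\omega)(e_i,e_j)c(e_j)$ into the family Bochner square, the cross terms with $\n^{\Lambda^\cdot(\overline{T^*N})\otimes E}_{e_i}$ do reproduce the first-order terms on the left-hand side, but you also generate cross terms with the second-fundamental-form summand $\langle S^{T_\mathbb{R}N}(e_i)e_j,f_\alpha\rangle c(e_j)f^\alpha$. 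These are new terms, proportional to $da\,\langle S^{T_\mathbb{R}N}(e_i)e_j,f_\alpha\rangle f^\alpha c(e_i)c(e_j)$ and to $d\bar a\epsilon\,(d_M\omega)(e_i,e_j)\langle S^{T_\mathbb{R}N}(e_i)e_k,f_\alpha\rangle f^\alpha c(e_j)c(e_k)$, and they appear nowhere on either side of \eqref{dilab-eq-lichnerowicz-a-bar-a}. The identity as stated is only correct because both sums vanish, and your proposal never addresses this.

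The vanishing is not routine. The first sum vanishes by the symmetry $\langle S^{T_\mathbb{R}N}(e_i)e_j,f_\alpha\rangle=\langle S^{T_\mathbb{R}N}(e_j)e_i,f_\alpha\rangle$ (a consequence of $S^{T_\mathbb{R}N}(U)V-S^{T_\mathbb{R}N}(V)U\in T_\mathbb{R}N$) against the antisymmetry of $c(e_i)c(e_j)$ for $i\neq j$. The second requires the explicit identity $\langle S^{T_\mathbb{R}N}(e_i)e_k,f_\alpha\rangle=-\tfrac{1}{2}(\nabla_{f_\alpha}\omega)(e_i,Je_k)$ from \cite[(1.5)]{b97}, which converts the sum into a quadratic expression in $\nabla_{f_\alpha}\omega$; one then shows it vanishes by averaging over the substitution $e_i\mapsto Je_i$ (using $J$-invariance of $\omega$ and $d_M\omega$) and by exchanging the index pairs $(j,k)$ and $(\alpha,\beta)$. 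This symmetrization is the genuinely nontrivial part of the proof of \eqref{dilab-eq-lichnerowicz-a-bar-a}, and a complete argument must supply it; without it the completed square on the right-hand side does not equal the left-hand side.
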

\begin{proof}
Applying \cite[Theorem 3.5]{b} with $t=1/\sqrt{2}$ and \eqref{eq:van1}, we have
\begin{align}
\label{dilab-eq-family-lichnerowicz}
\begin{split}
  & -C^{\mathscr{E},2} \\
= \; & \frac{1}{2}\Big(\n^{\Lambda^\cdot(\overline{T^*N})\otimes E}_{e_i} + \langle S^{T_\mathbb{R}N}(e_i)e_j,f_\alpha \rangle c(e_j)f^\alpha \Big)^2 \\
& - \frac{1}{2}\big(R^E+\frac{1}{2}\tr[R^{TN}]\big)(e_i,e_j)c(e_i)c(e_j) 
- \big(R^E+\frac{1}{2}\tr[R^{TN}]\big)(e_i,f_\alpha)c(e_i)f^\alpha \\
& - \frac{1}{2}\big(R^E+\frac{1}{2}\tr[R^{TN}]\big)(f_\alpha,f_\beta)f^\alpha f^\beta - \frac{1}{8}r^N  \;.
\end{split}
\end{align}

Taking the degree $0$ part of (\ref{dilab-eq-family-lichnerowicz}), we get
\begin{align}
\label{dilab-eq-point-lichnerowicz}
\begin{split}
  & -\big(\overline{\partial}^E_N + \overline{\partial}^{E,*}_N\big)^2 \\
= & \; \frac{1}{2}\Big(\n^{\Lambda^\cdot(\overline{T^*N})\otimes E}_{e_i}\Big)^2 
- \frac{1}{2}\big(R^E+\frac{1}{2}\tr[R^{TN}]\big)(e_i,e_j)c(e_i)c(e_j) 
- \frac{1}{8}r^N \;.
\end{split}
\end{align}

By \cite[Proposition 1.19]{bgs3} and by (\ref{dilab-eq-omega-big-e}), we get
\begin{equation}
\label{dilab-eq-omega-big-e-explicite}
\omega^\mathscr{E} = -\frac{\sqrt{-1}}{2}(d_M\omega)(e_i,e_j)c(e_i)c(e_j) - \frac{1}{4}(d_M\omega)(e_i,Je_i) + \omega^E \;.
\end{equation}
Since $d_N\omega=0$ and $[d_N,d_M]=0$, 
we have $d_Nd_M\omega=0$. 
Therefore
\begin{align}
\label{dilab-prop-lichnerowicz-a-bar-a-proof-eq-1}
\begin{split}
& \big[\overline{\partial}^E_N + \overline{\partial}^{E,*}_N,-\frac{\epsilon\sqrt{-1}}{4}(d_M\omega)(e_i,e_j)c(e_i)c(e_j)\big] \\
= \; & \frac{\epsilon\sqrt{-1}}{4}\big[c(e_k)\nabla^{\Lambda^\cdot(\overline{T^*N})\otimes E}_{e_k},(d_M\omega)(e_i,e_j)c(e_i)c(e_j)\big] \\
= \; &  \frac{\epsilon\sqrt{-1}}{4}\Big( \n^{\Lambda^\cdot(\overline{T^*N})\otimes E}_{e_i}(d_M\omega)(e_i,e_j)c(e_j) + (d_M\omega)(e_i,e_j)c(e_j)\nabla^{\Lambda^\cdot(\overline{T^*N})\otimes E}_{e_i}\Big) \;.
\end{split}
\end{align}
By \eqref{dilab-eq-point-lichnerowicz}, \eqref{dilab-eq-omega-big-e-explicite} and \eqref{dilab-prop-lichnerowicz-a-bar-a-proof-eq-1}, we get
\begin{align}
\label{dilab-eq-point-a-bar-a}
\begin{split}
& -\big(\overline{\partial}^E_N + \overline{\partial}^{E,*}_N\big)^2 
- da\,\frac{1}{2}\big(\overline{\partial}^E_N+\overline{\partial}^{E,*}_N\big) 
- d\bar{a}\,\big[\overline{\partial}^E_N+\overline{\partial}^{E,*}_N,\frac{\epsilon}{2}\omega^\mathscr{E}\big] 
+ dad\bar{a}\,\frac{\epsilon}{2}\omega^\mathscr{E} \\
= \; & \frac{1}{2}\Big( \n^{\Lambda^\cdot\overline{T^*N}\otimes E}_{e_i} 
- da\,\frac{1}{2}c(e_i)
- d\bar{a}\epsilon\,\frac{\sqrt{-1}}{2}(d_M\omega)(e_i,e_j)c(e_j) 
\Big)^2  \\
& - d\bar{a}\epsilon\,\big[ \n^{\Lambda^\cdot(\overline{T^*N})\otimes E}_{e_i} , \frac{1}{2}\omega^E - \frac{1}{8}(d_M\omega^{TN})(e_j,Je_j) \big] c(e_i) \\
& + dad\bar{a}\epsilon\,\big(\frac{1}{2}\omega^E - \frac{1}{8}(d_M\omega)(e_j,Je_j)\big) \\
& - \frac{1}{2}\big(R^E+\frac{1}{2}\tr[R^{TN}]\big)(e_i,e_j)c(e_i)c(e_j) - \frac{1}{8}r^N \;.
\end{split}
\end{align}

Comparing \eqref{dilab-eq-family-lichnerowicz},
\eqref{dilab-eq-point-lichnerowicz},   
\eqref{dilab-eq-point-a-bar-a}
with \eqref{dilab-eq-lichnerowicz-a-bar-a}, 
it only remains to show that
\begin{align}
\label{dilab-prop-lichnerowicz-a-bar-a-proof-eq-2}
\begin{split}
\sum_{i\neq j}^{}\langle S^{T_\mathbb{R}N}(e_i)e_j,f_\alpha \rangle f^\alpha c(e_i) 
c(e_j) = \; & 0 \;,\\
\sum_i\sum_{j\neq k}(d_M\omega)(e_i,e_j) \langle S^{T_\mathbb{R}N}(e_i)e_k,f_\alpha \rangle f^\alpha  c(e_j) c(e_k) = \; & 0 \;.
\end{split}
\end{align}

By \cite[\textsection 1(c)]{b}, if $U,V\in T\mathcal{N}$, 
then $S^{T_{\mathbb R}N}(U)V-S^{T_{\mathbb R}N}(V)U\in T_{\mathbb R}N$. 
Thus
\begin{equation}
\label{eq:err1}
\langle S^{T_\mathbb{R}N}(e_i)e_j,f_\alpha \rangle = 
\langle S^{T_\mathbb{R}N}(e_j)e_i,f_\alpha \rangle \;.
\end{equation}
By \eqref{eq:err1}, we get
the first identity in 
(\ref{dilab-prop-lichnerowicz-a-bar-a-proof-eq-2}).

Now we  prove the second identity in \eqref{dilab-prop-lichnerowicz-a-bar-a-proof-eq-2}. 
For simplicity, we introduce the following notation
\begin{equation}
\nabla_{f_\alpha} = i_{f_\alpha} d_M \;.
\end{equation}
By \cite[(1.5)]{b97}, we have
\begin{equation}
\langle S^{T_\mathbb{R}N}(e_i)e_k,f_\alpha \rangle =-\frac{1}{2} 
\left\langle \big(g^{T_\mathbb{R}N}\big)^{-1}\nabla_{f_{\alpha}}g^{T_\mathbb{R}N}(e_i)\,,\,e_k \right\rangle = 
-\frac{1}{2}\left(\nabla_{f_{\alpha}}\omega\right) (e_i,Je_k) \;.
\end{equation}
Therefore the second identity in \eqref{dilab-prop-lichnerowicz-a-bar-a-proof-eq-2} is equivalent to the follows:
\begin{equation}
\label{dilab-prop-lichnerowicz-a-bar-a-proof-eq-4}
\sum_i\sum_{j\neq k}(\nabla_{f_{\alpha}}\omega) (e_i,e_j) 
(\nabla_{f_{\beta}}\omega)(e_i,Je_k)  f^\alpha f^\beta c(e_j) c(e_k) = 0 \;.
\end{equation}
Since $(Je_i)_{1\leqslant i\leqslant n}$ is also an orthogonal basis of $T_\mathbb{R}N$, using the fact that $\omega$ and $d_M\omega$ are $J$-invariant, we get

\begin{align}
\label{dilab-prop-lichnerowicz-a-bar-a-proof-eq-5}
\begin{split}
& \sum_i\sum_{j\neq k}\left(\n_{f_\alpha}\omega\right) (e_i,e_j) 
\left(\n_{f_\beta}\omega\right) (e_i,Je_k)  f^\alpha f^\beta c(e_j) c(e_k) \\
= \; & \frac{1}{2} \sum_i\sum_{j\neq k}\left(\n_{f_\alpha}\omega\right) (e_i,e_j) 
\left(\n_{f_\beta}\omega\right) (e_i,Je_k)  f^\alpha f^\beta c(e_j) c(e_k) \\
& + \frac{1}{2} \sum_i\sum_{j\neq k}\left(\n_{f_\alpha}\omega\right) (Je_i,e_j) 
\left(\n_{f_\beta}\omega\right)(Je_i,Je_k)  f^\alpha f^\beta c(e_j) c(e_k) \\
= \; & \frac{1}{2} \sum_i\sum_{j\neq k}\left(\n_{f_\alpha}\omega\right) (e_i,e_j) 
\left(\n_{f_\beta}\omega\right) (e_i,Je_k)  f^\alpha f^\beta c(e_j) c(e_k) \\
& - \frac{1}{2} \sum_i\sum_{j\neq k}\left(\n_{f_\alpha}\omega\right) (e_i,Je_j) 
\left(\n_{f_\beta}\omega\right) (e_i,e_k)  f^\alpha f^\beta c(e_j) c(e_k) \;.
\end{split}
\end{align}
Exchanging the roles of $j,k$ and of $\alpha,\beta$, we obtain 
\begin{align}
\label{dilab-prop-lichnerowicz-a-bar-a-proof-eq-6}
\begin{split}
& \sum_i\sum_{j\neq k}\left(\n_{f_\alpha}\omega\right) (e_i,Je_j) 
\left(\n_{f_\beta}\omega\right) (e_i,e_k)  f^\alpha f^\beta c(e_j) c(e_k) \\
= \; &  \sum_i\sum_{j\neq k}\left(\n_{f_\beta}\omega\right) (e_i,Je_k) 
\left(\n_{f_\alpha}\omega\right) (e_i,e_j)  f^\beta f^\alpha c(e_k) c(e_j) \\
= \; &  \sum_i\sum_{j\neq k} \left(\n_{f_\alpha}\omega\right) (e_i,e_j)
\left(\n_{f_\beta}\omega\right) (e_i,Je_k) f^\alpha f^\beta c(e_j) c(e_k) \;.
\end{split}
\end{align}
By \eqref{dilab-prop-lichnerowicz-a-bar-a-proof-eq-5} and 
\eqref{dilab-prop-lichnerowicz-a-bar-a-proof-eq-6}, 
we get
\eqref{dilab-prop-lichnerowicz-a-bar-a-proof-eq-4}.
\end{proof}

\begin{proof}[Proof of Theorem \ref{dilab-prop-large-small-time-convergence}] 
The proof of \eqref{dilab-eq-large-time-convergence} follows the same argument as \cite[Theorem 3.16]{bl}.

Now we prove the first formula in \eqref{dilab-eq-small-time-convergence}. 
By Lemma \ref{dilab-prop-a-bar-a-formula}, 
it is sufficient to establish the asymptotics of the following terms as $t\rightarrow 0$ : 
\begin{align}
\label{dilab-eq-a0-proof-prop-small-time-convergence}
\begin{split}
& \trs \Big[\exp\big(-C^{\mathscr{E},2}_t 
- da\,\frac{1}{2}\big(\overline{\partial}^E_N+t\overline{\partial}^{E,*}_N\big) \\
& \hspace{35mm} - d\bar{a}\,\big[\overline{\partial}^E_N+t\overline{\partial}^{E,*}_N,\frac{\epsilon}{2}\omega^\mathscr{E}\big] 
+ dad\bar{a}\,\frac{\epsilon}{2}\omega^\mathscr{E}\big) \Big]^{\epsilon dad\bar{a}} \;,\\
& d_M \trs \Big[\frac{1}{2}N^{\Lambda^\cdot(\overline{T^*N})}\exp\big( D^{\mathscr{E},2}_t \big)\Big] \;.
\end{split}
\end{align}

As $t\rightarrow 0$, 
we claim that we can use equation \eqref{dilab-eq-lichnerowicz-a-bar-a} 
exactly as in Bismut-K\"{o}hler \cite[Theorem 3.22]{bk}. 
The main difference is that in \cite{bk}, 
the space of variations of the metrics is $1$-dimensional, 
while here it is the full basis $M$. 
By proceeding as in this reference, 
we get
\begin{align}
\label{dilab-eq-8-proof-prop-small-time-convergence}
\begin{split}
& \sqrt{2\pi i}\varphi \trs \Big[\exp\big(-C^{\mathscr{E},2}_t 
- da\,\frac{1}{2}\big(\overline{\partial}^E_N+t\overline{\partial}^{E,*}_N\big) \\
& \hspace{35mm} - d\bar{a}\,\big[\overline{\partial}^E_N+t\overline{\partial}^{E,*}_N,\frac{\epsilon}{2}\omega^\mathscr{E}\big] 
+ dad\bar{a}\,\frac{\epsilon}{2}\omega^\mathscr{E}\big) \Big]^{\epsilon dad\bar{a}} \\
= \; & q_*\Big[ \widetilde{\mathrm{Td}}(TN, 
g^{TN})*\widetilde{\mathrm{ch}}(E, g^E) \Big]  +  \mathscr{O}(t) \;.
\end{split}
\end{align}
This gives the asymptotics of the first term in \eqref{dilab-eq-a0-proof-prop-small-time-convergence}.

We turn to study the second term in \eqref{dilab-eq-a0-proof-prop-small-time-convergence}.
As $t\rightarrow 0$, by the local families index theorem technique \cite{b}, we get
\begin{equation}
\label{dilab-eq-3-proof-prop-small-time-convergence}
\varphi \trs \Big[tN^{\Lambda^\cdot(\overline{T^*N})}\exp\big( D^{\mathscr{E},2}_t \big)\Big] 
= q_*\left[\frac{\omega}{2\pi}\text{\rm Td}(TN, \nabla^{TN})\text{\rm ch}(E, \nabla^E)\right] + \mathscr{O}(\sqrt{t}) \;.
\end{equation}
Furthermore, by \cite[Theorems 2.11, 2.16]{bgs2}, 
the asymptotics of
$\trs \Big[N^{\Lambda^\cdot(\overline{T^*N})}\exp\big( D^{\mathscr{E},2}_t \big)\Big]$ 
is given by a Laurent series. 
By \eqref{dilab-eq-3-proof-prop-small-time-convergence}, we get
\begin{equation}
\label{dilab-eq-a1-proof-prop-small-time-convergence}
\varphi \trs \Big[N^{\Lambda^\cdot(\overline{T^*N})}\exp\big( D^{\mathscr{E},2}_t \big)\Big] 
= C_{-1} t^{-1}   + C_0 + \mathscr{O}(t) \;,
\end{equation}
with
\begin{equation}
\label{dilab-eq-a2-proof-prop-small-time-convergence}
C_{-1} = q_*\left[\frac{\omega}{2\pi}\text{\rm Td}(TN, \nabla^{TN})\text{\rm ch}(E, \nabla^E)\right] \;.
\end{equation}
Let $C_{-1}^{(p)}$ (resp.  $C_0^{(p)}$) be the  component of degree $p$ of $C_{-1}$ (resp.  $C_0$). 
By Remark \ref{dilab-rem-small-time-convergence}, for $p>0$, $C_{-1}^{(p)}=0$. 
Then
\begin{equation}
\label{dilab-eq-4-proof-prop-small-time-convergence}
\big(1 + N^{\Lambda^\cdot(T^*M)} + t\frac{\partial}{\partial t} \big) 
\trs\Big[N^{\Lambda^\cdot(\overline{T^*N})}\exp\big( D^{\mathscr{E},2}_t \big)\Big] 
= \sum_p \Big( (p+1) C_0^{(p)} \Big) + \mathscr{O}(t) \;.
\end{equation}

Applying \eqref{dilab-eq-8-proof-prop-small-time-convergence} with $\mathscr{E}$ replaced by $\mathscr{E}_+$ (see the proof of Proposition \ref{dilab-prop-trans-alpha-beta})
and taking the $dt$ component, 
we get
\begin{align}
\label{dilab-eq-5-proof-prop-small-time-convergence}
\begin{split}
& \varphi \trs \Big[ \exp\big( -C^{\mathscr{E}_+,2} 
- da\,\frac{1}{2}\big( \overline{\partial}^E_N + t\overline{\partial}^{E,*}_N \big) \\
& \hspace{35mm} - d\bar{a}\,\big[\overline{\partial}^E_N + t\overline{\partial}^{E,*}_N , \frac{\epsilon t}{2}\omega^{\mathscr{E}_+}\big] 
+ dad\bar{a}\,\frac{\epsilon t}{2}\omega^{\mathscr{E}_+}\big) \Big]^{\epsilon dad\bar{a}dt} \\
= \; & - \frac{1}{2} q_*\Big[ \mathrm{Td}'(TN, \nabla^{TN})\mathrm{ch}(E, \nabla^E)\Big] + \mathscr{O}(t) \;.
\end{split}
\end{align}
By Theorem \ref{dilab-thm-gen-flat2trivialclass}, Lemma \ref{dilab-prop-a-bar-a-formula-second} and
\eqref{dilab-eq-5-proof-prop-small-time-convergence}, 
we have
\begin{equation}
\label{dilab-eq-6-proof-prop-small-time-convergence}
\big(1 + N^{\Lambda^\cdot(T^*M)} + t\frac{\partial}{\partial t} \big) 
\trs\Big[N^{\Lambda^\cdot(\overline{T^*N})}\exp\big( D^{\mathscr{E},2}_t \big)\Big] 
= \text{ closed form } + \mathscr{O}(t) \;.
\end{equation}
By \eqref{dilab-eq-4-proof-prop-small-time-convergence} and 
\eqref{dilab-eq-6-proof-prop-small-time-convergence}, 
we have
\begin{align}
\label{dilab-eq-7-proof-prop-small-time-convergence}
 d_M C_0 = 0 \;.
\end{align}

By \eqref{dilab-eq-a1-proof-prop-small-time-convergence}, 
\eqref{dilab-eq-a2-proof-prop-small-time-convergence} and
\eqref{dilab-eq-7-proof-prop-small-time-convergence}, 
as $t\rightarrow 0$, 
we have
\begin{align}
\label{dilab-eq-10-proof-prop-small-time-convergence}
\begin{split}
& \sqrt{2\pi i} \varphi \, d_M  \trs \Big[ N^{\Lambda^\cdot(\overline{T^*N})}\exp\big( D^{\mathscr{E},2}_t \big)\Big] \\
= \; & d_M \varphi \trs \Big[ N^{\Lambda^\cdot(\overline{T^*N})}\exp\big( D^{\mathscr{E},2}_t \big)\Big] \\
= & \; \frac{1}{t} d_M q_*\left[\frac{\omega}{2\pi}\text{\rm Td}(TN, \nabla^{TN})\text{\rm ch}(E, \nabla^E)\right] + \mathscr{O}(\sqrt{t}) \;.
\end{split}
\end{align}
This gives the asymptotics of the second term in \eqref{dilab-eq-a0-proof-prop-small-time-convergence}.

The first formula in \eqref{dilab-eq-small-time-convergence} follows from 
Lemma \ref{dilab-prop-a-bar-a-formula}, 
\eqref{dilab-eq-8-proof-prop-small-time-convergence} and
\eqref{dilab-eq-10-proof-prop-small-time-convergence}.

The second formula in \eqref{dilab-eq-small-time-convergence} may be proved as  a consequence of the first one
by applying the same technique as in the proof of Proposition \ref{dilab-prop-trans-alpha-beta}.
\end{proof}

\subsection{Analytic torsion forms}
\label{dilab-subsect-torsionform}

\

We choose $g_1,g_2\in\smooth(\R_+,\R)\index{g1@$g_1$}\index{g2@$g_2$}$ satisfying
\begin{equation}
\label{dilab-eq-condition-inf-g1g2}
g_1(t) = 1 + \mathscr{O}\big(t\big) \;,\hspace{5mm}
g_2(t) = 1 + \mathscr{O}\big(t^2\big) \;,\hspace{5mm}\text{as }t \rightarrow 0 \;,
\end{equation}
\begin{equation}
\label{dilab-eq-condition-zero-g1g2}
g_1(t) = \mathscr{O}\big(e^{-t}\big) \;,\hspace{5mm}
g_2(t) = \mathscr{O}\big(e^{-t}\big) \;,\hspace{5mm}\text{as }t \rightarrow +\infty \;,
\end{equation}
and
\begin{align}
\label{dilab-eq-condition-g1g2}
\begin{split}
\int_0^1 \frac{g_1(t) - 1}{t} dt + 
\int_1^{+\infty} \frac{g_1(t)}{t}  = \; & \Gamma'(1) - 2 \;,\\ 
\int_0^1 \frac{g_2(t) - 1}{t^2} dt + 
\int_1^{+\infty} \frac{g_2(t)}{t^2} = \; & 1 \;.
\end{split}
\end{align}
Using Mellin tranformation, \eqref{dilab-eq-condition-g1g2} can be reformulated as follows
\begin{align}
\label{dilab-eq-condition-g1g2-zeta}
\begin{split}
\Big( \frac{d}{ds} \frac{1}{\Gamma(s)}\int_0^{+\infty} t^{s-1} g_1(t) dt \Big)_{s=0} = \; & - 2 \;,\\
\Big( \frac{d}{ds} \frac{1}{\Gamma(s)}\int_0^{+\infty} t^{s-2} g_2(t) dt \Big)_{s=0} = \; & 0 \;.
\end{split}
\end{align}

\begin{defn}
\label{dilab-def-analytic-torsion-form}
The analytic torsion forms $\mathscr{T}(g^{TN},g^E)\in\Omega^\mathrm{even}(M)\index{tf@$\mathscr{T}(g^{TN},g^E)$}$ are defined by
\begin{align}
\begin{split}
\mathscr{T}(g^{TN},g^E) 
= \; & - \int_{0}^{+\infty} 
\Big\{ \beta_t + \frac{g_1(t)-1}{2} {\chi}'(N,E) - \frac{g_1(t)}{2} n \chi(N,E) \\
& \hspace{25mm} + \frac{g_1(t)}{2} q_*\Big[\mathrm{Td}'(TN, \n^{TN})\mathrm{ch}(E, \n^E)\Big]  \\
& \hspace{25mm} + \frac{g_2(t)}{2t}  q_*\Big[\frac{\omega}{2\pi}\mathrm{Td}(TN, \n^{TN})\mathrm{ch}(E, \nabla^E)\Big] 
\Big\} \frac{dt}{t} \;.
\end{split}
\end{align}
\end{defn}

By Theorem \ref{dilab-prop-large-small-time-convergence}, 
$\mathscr{T}(g^{TN},g^E)$ is well-defined. 
Here we remark that $\mathscr{T}(g^{TN},g^E)$ is independent of $g_1$ and $g_2$.

\begin{prop}
\label{dilab-prop-diff-torsion}
We have
\begin{align}
\label{dilab-eq-prop-diff-torsion}
\begin{split}
d_M \mathscr{T}(g^{TN},g^E) 
= \; & q_*\Big[\widetilde{\mathrm{Td}}(TN, g^{TN})*\widetilde{\mathrm{ch}}(E, g^E)\Big] \\
& - f(H^\cdot(N,E), \n^{H^\cdot(N,E)}, g^{H^\cdot(N,E)}) \;.
\end{split}
\end{align}
\end{prop}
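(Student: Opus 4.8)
The plan is to differentiate the defining integral of $\mathscr{T}(g^{TN},g^E)$ under the integral sign, to use the transgression identity of Proposition~\ref{dilab-prop-trans-alpha-beta} to convert $d_M\beta_t$ into $t\,\partial_t\alpha_t$, and then to evaluate the resulting integral by the fundamental theorem of calculus, reading off the two boundary contributions from the asymptotics of Theorem~\ref{dilab-prop-large-small-time-convergence}. The normalization conditions \eqref{dilab-eq-condition-g1g2} on $g_1,g_2$ are exactly what is needed to cancel the divergences that arise.

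First I would apply $d_M$ to Definition~\ref{dilab-def-analytic-torsion-form}. Among the five terms under the integral, the first two are scalar multiples of $\chi'(N,E)$ and $\chi(N,E)$, hence $d_M$-closed. The third, $\frac{g_1(t)}{2}q_*\big[\mathrm{Td}'(TN,\n^{TN})\mathrm{ch}(E,\n^E)\big]$, equals $g_1(t)/2$ times a constant function on $M$: by Theorem~\ref{dilab-thm-gen-flat2trivialclass} applied to $TN\oplus E$, the fibre integral of any Chern--Weil form of the flat fibration is constant, so this term is killed by $d_M$. Writing $\Psi := q_*\big[\frac{\omega}{2\pi}\mathrm{Td}(TN,\n^{TN})\mathrm{ch}(E,\n^E)\big]$, which by Remark~\ref{dilab-rem-small-time-convergence} is a smooth function on $M$, only the fourth term survives, and using $d_M\beta_t = t\,\partial_t\alpha_t$ I obtain
\[
d_M\mathscr{T}(g^{TN},g^E) = -\int_0^\infty\Big(\partial_t\alpha_t + \frac{g_2(t)}{2t^2}d_M\Psi\Big)\,dt.
\]
Interchanging $d_M$ with the integral is justified by the standard uniform-in-$M$ heat-kernel estimates underlying Theorem~\ref{dilab-prop-large-small-time-convergence}.

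Next I would cut the integral at $t=\varepsilon$ and let $\varepsilon\to 0$. By the fundamental theorem of calculus and the large-time estimate in \eqref{dilab-eq-large-time-convergence}, one has $-\int_\varepsilon^\infty\partial_t\alpha_t\,dt = \alpha_\varepsilon - f(H^\cdot(N,E),\n^{H^\cdot(N,E)},g^{H^\cdot(N,E)})$, the integral converging at $+\infty$ (the $g_2$ term also converges there since $g_2(t)=\mathscr{O}(e^{-t})$). For the remaining integral I expand near $t=0$ using $g_2(t)-1 = \mathscr{O}(t^2)$ together with the second identity in \eqref{dilab-eq-condition-g1g2} to get $\int_\varepsilon^\infty\frac{g_2(t)}{2t^2}\,dt = \frac{1}{2\varepsilon} + o(1)$, whence $-\int_\varepsilon^\infty\frac{g_2(t)}{2t^2}d_M\Psi\,dt = -\frac{1}{2\varepsilon}d_M\Psi + o(1)$.

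Finally, the small-time estimate in \eqref{dilab-eq-small-time-convergence} gives $\alpha_\varepsilon = q_*\big[\widetilde{\mathrm{Td}}(TN,g^{TN})*\widetilde{\mathrm{ch}}(E,g^E)\big] + \frac{1}{2\varepsilon}d_M\Psi + \mathscr{O}(\sqrt{\varepsilon})$. Adding the two contributions, the divergent terms $\pm\frac{1}{2\varepsilon}d_M\Psi$ cancel, and letting $\varepsilon\to 0$ yields exactly \eqref{dilab-eq-prop-diff-torsion}. I expect the main obstacle to be not any single identity but the bookkeeping of the two boundary layers: one must verify that the $g_2$-correction is built so that its $d_M$ cancels precisely the $\frac{1}{2t}d_M\Psi$ divergence of $\alpha_t$ as $t\to 0$ (this is the content of \eqref{dilab-eq-condition-g1g2}), while the $g_1$-corrections, being closed, drop out of $d_M\mathscr{T}$ altogether and only serve to regularize $\mathscr{T}$ itself.
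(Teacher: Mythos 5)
Your proposal is correct and follows essentially the same route as the paper: apply $d_M$ to the defining integral, note that the $\chi$, $\chi'$ and $g_1$-terms are $d_M$-closed (the latter because $q_*\big[\mathrm{Td}'(TN,\n^{TN})\mathrm{ch}(E,\n^E)\big]$ is constant by Theorem \ref{dilab-thm-gen-flat2trivialclass}), convert $d_M\beta_t$ to $t\,\partial_t\alpha_t$ via Proposition \ref{dilab-prop-trans-alpha-beta}, and evaluate using the asymptotics of Theorem \ref{dilab-prop-large-small-time-convergence} together with the normalization \eqref{dilab-eq-condition-g1g2}. Your explicit $\varepsilon$-cutoff bookkeeping, showing the cancellation of the $\frac{1}{2\varepsilon}d_M q_*\big[\frac{\omega}{2\pi}\mathrm{Td}\,\mathrm{ch}\big]$ divergences, simply fills in details the paper leaves implicit.
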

\begin{proof}
By Theorem \ref{dilab-thm-gen-flat2trivialclass}, 
$q_*\Big[\mathrm{Td}'(TN, \n^{TN})\mathrm{ch}(E, \n^E)\Big]$ is a constant function on $M$. 
Then, by Proposition \ref{dilab-prop-trans-alpha-beta}, we get
\begin{align}
\label{dilab-eq1-pf-prop-diff-torsion}
\begin{split}
& d_M \mathscr{T}(g^{TN},g^E) \\
= \; & - \int_{0}^{+\infty} 
\Big\{ d_M \beta_t  + \frac{g_2(t)}{2t} d_Mq_*\Big[\frac{\omega}{2\pi}\mathrm{Td}(TN, \n^{TN})\mathrm{ch}(E, \nabla^E)\Big] 
\Big\} \frac{dt}{t} \\
= \; & - \int_{0}^{+\infty} 
\Big\{ \frac{\partial}{\partial t}\alpha_t  + \frac{g_2(t)}{2t^2} d_Mq_*\Big[\frac{\omega}{2\pi}\mathrm{Td}(TN, \n^{TN})\mathrm{ch}(E, \nabla^E)\Big] 
\Big\} dt \;.
\end{split}
\end{align}
By Theorem \ref{dilab-prop-large-small-time-convergence}, 
\eqref{dilab-eq-condition-g1g2} and 
\eqref{dilab-eq1-pf-prop-diff-torsion},
we get \eqref{dilab-eq-prop-diff-torsion}.
\end{proof}

Proceeding in the same way as in \cite[Theorem 3.16]{bl}, 
we get
\begin{equation}
\trs\Big[ N^{\Lambda^\cdot(\overline{T^*N})} \exp\big(-tD^{E,2}_N\big) \Big] = \chi'(N,E) + \mathscr{O}\big(t^{-1}\big) \;,
\hspace{5mm}\text{as } t\rightarrow +\infty \;.
\end{equation}
For $s\in\mathbb{C}$ with $\mathrm{Re}(s)>n$, 
we define
\begin{equation}
\label{dilab-eq-def-zeta}
\theta(s) 
= -\frac{1}{\Gamma(s)}\int_0^{+\infty} t^{s-1}
\left[ \trs \big[ N^{\Lambda^\cdot(\overline{T^*N})}\exp\big(-tD^{E,2}_N\big) \big] - {\chi}'(N,E) \right] dt 
\index{theta@$\theta(s)$}\;.
\end{equation}
By \cite{s}, 
the function $\theta(s)$ admits a meromorphic continuation to the whole complex plane, which is regular at $0\in\mathbb{C}$.

Let $\mathscr{T}^{[0]}(g^{TN},g^E)$ be the component of $\mathscr{T}(g^{TN},g^E)$ of degree zero.

\begin{prop}
\label{dilab-prop-torsion-degzero}
We have
\begin{equation}
\label{dilab-eq-prop-torsion-degzero}
\mathscr{T}^{[0]}(g^{TN},g^E) = \frac{1}{2}{\theta}'(0) \;. 
\end{equation}
\end{prop}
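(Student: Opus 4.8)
The plan is to project the whole construction onto the component of degree zero in $\Omega^\cdot(M)$ and to identify the resulting one-dimensional integral with the Mellin transform defining $\theta(s)$. First I would compute $\beta_t^{[0]}$, the degree zero part of $\beta_t$. On forms of degree zero the operator $\varphi$ acts as the identity, and by \eqref{dilab-eq-lc-superconnection-rescaling} the $\Lambda^0(T^*M)$-component of $D^{\mathscr{E},2}_t$ is $(t\overline{\partial}^{E,*}_N-\overline{\partial}^E_N)^2=-tD^{E,2}_N$. Writing $w(t)=\trs\big[N^{\Lambda^\cdot(\overline{T^*N})}\exp(-tD^{E,2}_N)\big]$, so that $\tfrac{d}{dt}w(t)=-\trs\big[N^{\Lambda^\cdot(\overline{T^*N})}D^{E,2}_N\exp(-tD^{E,2}_N)\big]$, the definition \eqref{dilab-eq-def-alphatbetat} of $\beta_t$ gives $\beta_t^{[0]}=\tfrac12 w(t)+t\,w'(t)$.

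Next I would extract the degree zero parts of the correction terms in Definition \ref{dilab-def-analytic-torsion-form}. The quantities $\chi'(N,E)$ and $\chi(N,E)$ are scalars; by Remark \ref{dilab-rem-small-time-convergence} the form $q_*\big[\tfrac{\omega}{2\pi}\mathrm{Td}(TN,\n^{TN})\mathrm{ch}(E,\n^E)\big]$ is a function, which I denote $B$, and I set $A=\big(q_*\big[\mathrm{Td}'(TN,\n^{TN})\mathrm{ch}(E,\n^E)\big]\big)^{[0]}$. By \eqref{dilab-eq-a1-proof-prop-small-time-convergence}--\eqref{dilab-eq-a2-proof-prop-small-time-convergence} the function $w$ has the Laurent expansion $w(t)=Bt^{-1}+C_0+\mathscr{O}(t)$ as $t\to 0$; comparing the resulting expansion $\beta_t^{[0]}=-\tfrac{B}{2t}+\tfrac12 C_0+\mathscr{O}(t)$ with the small time formula of Theorem \ref{dilab-prop-large-small-time-convergence} forces $C_0=n\chi(N,E)-A$. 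Consequently $-\tfrac{g_1}{2}n\chi(N,E)+\tfrac{g_1}{2}A=-\tfrac{g_1}{2}C_0$, and setting $h(t)=w(t)-\chi'(N,E)$ the degree zero torsion form becomes the convergent integral
\begin{equation}
\mathscr{T}^{[0]}(g^{TN},g^E)=-\int_0^{+\infty}\Big\{\tfrac12 h(t)+t\,w'(t)+\tfrac{g_1(t)}{2}\chi'(N,E)-\tfrac{g_1(t)}{2}C_0+\tfrac{g_2(t)}{2t}B\Big\}\frac{dt}{t}.
\end{equation}

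Finally I would carry out the Mellin computation. With $Z(s)=\int_0^{+\infty}t^{s-1}h(t)\,dt$, so that $\theta(s)=-Z(s)/\Gamma(s)$ by \eqref{dilab-eq-def-zeta}, integration by parts gives $\int_0^{+\infty}t^{s-1}\big(\tfrac12 h+t\,w'\big)\,dt=(\tfrac12-s)Z(s)$. Using the Laurent data at $s=0$, namely $Z(s)=(C_0-\chi'(N,E))s^{-1}+Z_{\mathrm{fin}}+\mathscr{O}(s)$, together with $\int_0^{+\infty}t^{s-1}g_1(t)\,dt=s^{-1}+(\Gamma'(1)-2)+\mathscr{O}(s)$ and $\int_0^{+\infty}t^{s-2}g_2(t)\,dt=\mathscr{O}(s)$, which are exactly the content of the normalisation conditions \eqref{dilab-eq-condition-g1g2} (equivalently \eqref{dilab-eq-condition-g1g2-zeta}), I would split the integral at $t=1$ and continue each half meromorphically. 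The simple poles at $s=0$ cancel and the surviving finite part equals $-\tfrac12 Z_{\mathrm{fin}}+\tfrac12\Gamma'(1)\big(C_0-\chi'(N,E)\big)$. Since the expansion $\theta(s)=-Z(s)/\Gamma(s)$ yields $\theta'(0)=-Z_{\mathrm{fin}}+\Gamma'(1)\big(C_0-\chi'(N,E)\big)$, this gives $\mathscr{T}^{[0]}(g^{TN},g^E)=\tfrac12\theta'(0)$.

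The hard part will be this last bookkeeping: the individual terms $\tfrac12 h(t)/t$ and $B\,g_2(t)/(2t^2)$ both blow up like $t^{-2}$ as $t\to 0$, so only their combination is integrable and one cannot split the integral naively at the lower end. The point to handle with care is that each separate contribution must be read off from the meromorphic continuation of its own Mellin transform, and that the conditions \eqref{dilab-eq-condition-g1g2} on $g_1,g_2$ are precisely what cancel the $s=0$ poles and pin the finite part down to $\tfrac12\theta'(0)$; this is also what makes $\mathscr{T}(g^{TN},g^E)$ independent of the auxiliary functions $g_1,g_2$.
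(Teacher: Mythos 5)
Your proposal is correct and follows essentially the same route as the paper: extract $\beta_t^{[0]}=\tfrac12(1+2t\tfrac{\partial}{\partial t})\trs\big[N^{\Lambda^\cdot(\overline{T^*N})}\exp(-tD^{E,2}_N)\big]$, identify the constant Laurent coefficient from the small-time expansion in Theorem \ref{dilab-prop-large-small-time-convergence}, and evaluate the regularized integral by Mellin transform using the normalizations \eqref{dilab-eq-condition-g1g2-zeta}. The only (immaterial) difference is organizational: you track the residue and finite part of $Z(s)$ directly and exhibit the pole cancellation, whereas the paper writes the answer as $\frac{d}{ds}\big|_{s=0}\frac{1-2s}{2}\theta(s)$ plus constants and cancels them against the computed value of $\theta(0)$.
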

\begin{proof}
By \eqref{dilab-eq-lc-superconnection-rescaling} and \eqref{dilab-eq-def-alphatbetat}, we get
\begin{align}
\begin{split}
\label{dilab-eq1-pf-prop-torsion-degzero}
\beta_t^{[0]} = \; & \trs\Big[ \frac{N^{\Lambda^\cdot(\overline{T^*N})}}{2}\big(1-2tD^{E,2}_N\big) \exp\big(-tD^{E,2}_N\big) \Big] \\
= \; & \frac{1}{2}\Big(1+2t\frac{\partial}{\partial t}\Big)\trs\Big[ N^{\Lambda^\cdot(\overline{T^*N})} \exp\big(-tD^{E,2}_N\big) \Big] \;.
\end{split}
\end{align}

By \eqref{dilab-eq-a1-proof-prop-small-time-convergence}, 
there exist $a_0,a_1\in\C$ such that,
as $t\rightarrow 0$, 
\begin{equation}
\label{dilab-eq2-pf-prop-torsion-degzero}
\trs\Big[ N^{\Lambda^\cdot(\overline{T^*N})} \exp\big(-tD^{E,2}_N\big) \Big] 
= a_{-1}t^{-1} + a_0 + \mathscr{O}\big(\sqrt{t}\big) \;.
\end{equation}
By \eqref{dilab-eq-small-time-convergence}, 
\eqref{dilab-eq1-pf-prop-torsion-degzero} and 
\eqref{dilab-eq2-pf-prop-torsion-degzero}, 
we get
\begin{equation}
\label{dilab-eq3-pf-prop-torsion-degzero}
a_0 = - q_*\left[\mathrm{Td}'(TN, \nabla^{TN})\mathrm{ch}(E, \nabla^E)\right] + n \chi(N,E) \;.
\end{equation}
By \eqref{dilab-eq-def-zeta},
\eqref{dilab-eq2-pf-prop-torsion-degzero} and
\eqref{dilab-eq3-pf-prop-torsion-degzero}, 
we get
\begin{equation}
\label{dilab-eq5-pf-prop-torsion-degzero}
{\theta}(0) = q_*\left[\mathrm{Td}'(TN, \nabla^{TN})\mathrm{ch}(E, \nabla^E)\right] - n \chi(N,E) + {\chi}'(N,E) \;.
\end{equation}

By Definition \ref{dilab-def-analytic-torsion-form}, 
\eqref{dilab-eq-condition-g1g2-zeta}, 
\eqref{dilab-eq-def-zeta} and
\eqref{dilab-eq1-pf-prop-torsion-degzero}, 
we have
\begin{align}
\label{dilab-eq6-pf-prop-torsion-degzero}
\begin{split}
& \mathscr{T}^{[0]}(g^{TN},g^E) \\
= \; & - \int_{0}^{+\infty} 
\Big\{ \frac{1}{2}\Big(1+2t\frac{\partial}{\partial t}\Big)\trs\Big[ N^{\Lambda^\cdot(\overline{T^*N})} \exp\big(-tD^{E,2}_N\big) \Big]
 - \frac{1}{2} {\chi}'(N,E) \\
& \hspace{10mm} + \frac{g_1(t)}{2} \left( q_*\Big[\mathrm{Td}'(TN, \n^{TN})\mathrm{ch}(E, \n^E)\Big] - n\chi(N,E) + {\chi}'(N,E) \right) \\
& \hspace{10mm} + \frac{g_2(t)}{2t}  q_*\Big[\frac{\omega}{2\pi}\mathrm{Td}(TN, \n^{TN})\mathrm{ch}(E, \nabla^E)\Big] 
\Big\} \frac{dt}{t} \\
= \; & - \frac{1}{2} \frac{d}{ds} \Big|_{s=0} \frac{1}{\Gamma(s)}  \int_{0}^{+\infty} 
t^{s-1}\Big(1+2t\frac{\partial}{\partial t}\Big) \Big\{ \trs\Big[ N^{\Lambda^\cdot(\overline{T^*N})} \exp\big(-tD^{E,2}_N\big) \Big] \\
&\hspace{75mm} - {\chi}'(N,E) \Big\} dt \\
& - \frac{1}{2} \frac{d}{ds}\Big|_{s=0} \frac{1}{\Gamma(s)}\int_0^{+\infty} t^{s-1} g_1(t) dt  \,
\Big( q_*\Big[\mathrm{Td}'(TN, \n^{TN})\mathrm{ch}(E, \n^E)\Big] \\
&\hspace{75mm} - n\chi(N,E) + {\chi}'(N,E) \Big) \\
& - \frac{1}{2} \frac{d}{ds}\Big|_{s=0} \frac{1}{\Gamma(s)}\int_0^{+\infty} t^{s-2} g_2(t) dt  \,
q_*\Big[\frac{\omega}{2\pi}\mathrm{Td}(TN, \n^{TN})\mathrm{ch}(E, \nabla^E)\Big] \\
= \; & \frac{d}{ds}\Big|_{s=0} \frac{1-2s}{2}\theta(s) 
+ q_*\Big[\mathrm{Td}'(TN, \n^{TN})\mathrm{ch}(E, \n^E)\Big] -n\chi(N,E) + {\chi}'(N,E) \\
= \; & \frac{1}{2}{\theta}'(0) - \theta(0) + q_*\Big[\mathrm{Td}'(TN, \n^{TN})\mathrm{ch}(E, \n^E)\Big] -\chi(N,E) + {\chi}'(N,E) \;.
\end{split}
\end{align}

By \eqref{dilab-eq5-pf-prop-torsion-degzero} and \eqref{dilab-eq6-pf-prop-torsion-degzero}, 
we obtain \eqref{dilab-eq-prop-torsion-degzero}.
\end{proof}



\providecommand{\bysame}{\leavevmode\hbox to3em{\hrulefill}\thinspace}
\providecommand{\MR}{\relax\ifhmode\unskip\space\fi MR }
\providecommand{\MRhref}[2]{%
  \href{http://www.ams.org/mathscinet-getitem?mr=#1}{#2}
}
\providecommand{\href}[2]{#2}

\end{document}